\newcommand{\barr}{\overline}
\newcommand{\NN}{\mathbb{N}}
\newcommand{\cA}{\mathcal{A}}
\newcommand{\cB}{\mathcal{B}}
\newcommand{\cC}{\mathcal{C}}
\newcommand{\cD}{\mathcal{D}}
\newcommand{\cF}{\mathcal{F}}
\newcommand{\cP}{\mathcal{P}}
\newcommand{\cW}{\mathcal{W}}
\newtheorem{thm}{Theorem}[section]
\newtheorem{cor}[thm]{Corollary}
\newtheorem{lem}[thm]{Lemma}
\newtheorem{prop}[thm]{Proposition}
\newtheorem{example}{Example}
\theoremstyle{definition}
\newtheorem{define}[thm]{Definition}
\theoremstyle{remark}
\newtheorem{rem}[thm]{Remark}
\DeclareMathOperator{\Pro}{Pro}
\DeclareMathOperator{\Hom}{Hom}
\DeclareMathOperator{\precolim}{colim}
\def\colim{\mathop{\precolim}}
\def \mcal{\mathcal}
\DeclareTextFontCommand{\textcyr}{\fontencoding{OT2}\fontfamily{wncyr}\fontseries{m}\fontshape{n}\selectfont}
\begin{document}
\title{Functorial Factorizations in Pro Categories}

\author{Ilan Barnea \and Tomer M. Schlank}

\maketitle

\begin{abstract}
In this paper we prove a few propositions concerning factorizations of morphisms in pro categories, the most important of which solves an open problem of Isaksen \cite{Isa} concerning the existence of certain types of functorial factorizations. "On our way" we explain and correct an error in one of the standard references on pro categories.
\end{abstract}

\tableofcontents

\section{Introduction}
Pro-categories introduced by Grothendieck \cite{SGA4-I} have found many applications over the years in fields such as algebraic geometry \cite{AM}, shape theory \cite{MS} and more. In this paper we prove a few propositions concerning factorizations of morphisms in pro categories. These will later be used to deduce certain facts about model structures on pro categories. The most important conclusion of this paper will be solving an open problem of Isaksen \cite{Isa} concerning the existence of functorial factorizations in what is known as the strict model structure on a pro category. In order to state our results more accurately we give some definitions in a rather brief way. For a more detailed account see section ~\ref{s:prelim}

Let $\mcal{C}$ be a category and $M$ a class of morphisms in $\cC$. We denote by:
\begin{enumerate}
\item $R(M)$ the class of morphisms in $\mcal{C}$ that are retracts of morphisms in $M$.
\item $^{\perp}M$ the class of morphisms in $\cC$ having the left lifting property w.r.t. all maps in $M$.
\item $M^{\perp}$ the class of morphisms in $\cC$ having the right lifting property w.r.t. all maps in $M$.
\end{enumerate}

Let $N,M$ be classes of morphisms in $\cC$. We will say that there exist a factorization in $\cC$ into a morphism in $N$ followed by a morphism in $M$ (and denote $Mor(\mcal{C}) = M\circ N$) if every map $X\to Y $ in $\mcal{C}$ can be factored as $X\xrightarrow{q} L\xrightarrow{p} Y $ s.t. $q$ is in ${N}$ and $p$ is in $M$. The pair $(N,M)$ will be called a weak factorization system in $\cC$ (see \cite{Rie}) if the following holds:
\begin{enumerate}
\item $Mor(\mcal{C}) = M\circ N$.
\item $N=^{\perp}M$.
\item $N^{\perp}=M$.
\end{enumerate}

A functorial factorization in $\cC$,  is a functor:
$\cC^{\Delta^1}\to\cC^{\Delta^2}$ denoted:
$$(X\xrightarrow{f}Y)\mapsto ({X}\xrightarrow{q_f} L_{f}\xrightarrow{p_f}Y)$$
s.t.
\begin{enumerate}
\item For any morphism $f$ in $\cD$ we have: $f=p_f\circ q_f$.
\item For any morphism:
$$\xymatrix{{X}\ar[r]^{f}\ar[d]_{l} & {Y}\ar[d]^{k}\\
            {Z} \ar[r]^t   &   {W},}$$
in $\cD^{\Delta^1}$ the corresponding morphism in $\cD^{\Delta^2}$ is of the form:
$$\xymatrix{{X}\ar[r]^{q_f}\ar[d]_{l} & L_{f}\ar[r]^{h_f}\ar[d]^{L_{(l,k)}} & {Y}\ar[d]^{k}\\
            {Z} \ar[r]^{q_t} & L_t\ar[r]^{p_t}    &   {W}.}$$
\end{enumerate}

The above functorial factorization is said to be into a morphism in $N$ followed be a morphism in $M$ if for every $f\in Mor(\cC)$ we have $q_f\in N,p_f\in M$.

We will denote $Mor(\mcal{C}) = ^{func}M\circ N$ if there exist a \emph{functorial} factorization in $\cC$ into a morphism in $N$ followed by a morphism in $M$. The pair $(N,M)$ will be called a \emph{functorial} weak factorization system in $\cC$ if the following holds:
\begin{enumerate}
\item $Mor(\mcal{C}) =^{func} M\circ N$.
\item $N=^{\perp}M$.
\item $N^{\perp}=M$.
\end{enumerate}

Note that $Mor(\mcal{C}) =^{func} M\circ N$ clearly implies $Mor(\mcal{C}) = M\circ N$.

The category $\Pro(\mcal{C})$ has as objects all diagrams in $\cC$ of the form $I\to \cC$ s.t. $I$ is small and directed (see Definition ~\ref{d_directed}). The morphisms are defined by the formula:
$$\Hom_{\Pro(\mcal{C})}(X,Y):=\lim\limits_s \colim\limits_t \Hom_{\mcal{C}}(X_t,Y_s).$$
Composition of morphisms is defined in the obvious way.

Note that not every map in $Pro(\cC)$ is a natural transformation (the source and target need not even have the same indexing category). However, every natural transformation between objects in $Pro(\cC)$ having the same indexing category, induces a morphism in $Pro(\cC)$ between these objects, in a rather obvious way.

Let $M$ be a class of morphisms in $\cC$. We denote by $Lw^{\cong}(M)$ the class of morphisms in $\Pro(\mcal{C})$ that are \textbf{isomorphic} to a morphism that comes from a natural transformation which is a level-wise $M$-map.

If $T$ is a partially ordered set, then we view $T$ as a category which has a single morphism $u\to v$ iff $u\geq v$. A cofinite poset is a poset $T$ s.t. for every $x \in T$ the set $T_x:=\{z\in T| z \leq x\}$ is finite.

Suppose now that $\mcal{C}$ has finite limits. Let $T$ a small cofinite poset and $F:X\to Y$ a morphism in $\mcal{C}^T$. Then $F$ will be called a special $M$-map, if the natural map $X_t \to Y_t \times_{\lim \limits_{s<t} Y_s} \lim \limits_{s<t} X_s $ is in $M$, for every $t\in T$. We denote by $Sp^{\cong}(M)$ the class of morphisms in $\Pro(\mcal{C})$ that are \textbf{isomorphic} to a morphism that comes from a (natural transformation which is a) special $M$-map.

The following proposition gives strong motivation for the above defined concepts. It is proved in sections ~\ref{ss_lift} and ~\ref{ss_fact}.

\begin{prop}\label{p:fact}
Let $\mcal{C}$ be a category that has finite limits, and let $N,M$ be classes of morphisms in $\cC$. Then:
\begin{enumerate}
\item $^{\perp}R(Sp^{\cong}(M))=^{\perp}Sp^{\cong}(M)=^{\perp}M.$
\item If $Mor(\mcal{C}) = M\circ N$ then $Mor(Pro(\mcal{C})) = Sp^{\cong}(M)\circ Lw^{\cong}(N)$.
\item If $Mor(\mcal{C}) = M\circ N$ and $N\perp M$ (in particular, if $(N,M)$ is a weak factorization system in $\cC$), then $(Lw^{\cong}(N),R(Sp^{\cong}(M)))$ is a weak factorization system in $Pro(\cC)$.
\end{enumerate}
\end{prop}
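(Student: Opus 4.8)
The plan is to build everything on two engines---a cofinite-induction lifting argument and a cofinite-induction factorization---and then assemble part (3) by formal weak-factorization-system calculus.

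For part (1), I would treat the first equality $^{\perp}R(Sp^{\cong}(M))={}^{\perp}Sp^{\cong}(M)$ formally: for any class $S$ one has $S\subseteq R(S)$, hence $^{\perp}R(S)\subseteq{}^{\perp}S$, while the left lifting property is stable under retracts of the target, giving the reverse; apply this to $S=Sp^{\cong}(M)$. For the second equality $^{\perp}Sp^{\cong}(M)={}^{\perp}M$, the inclusion $\subseteq$ is immediate once I observe that $M\subseteq Sp^{\cong}(M)$: a single map in $M$ is a special $M$-map over the one-point poset, since there the matching object is terminal and the relative map is the map itself. The substantive inclusion is $^{\perp}M\subseteq{}^{\perp}Sp^{\cong}(M)$. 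Here I would take $f\in{}^{\perp}M$ and a special $M$-map $p\colon L\to Y$ over a cofinite poset $T$, and construct a lift by induction on $T$ (well-founded because $T$ is cofinite): at stage $t$ the partial lift already built yields a map into the matching object $Y_t\times_{\lim_{s<t}Y_s}\lim_{s<t}L_s$, and lifting $f$ against the relative map $L_t\to Y_t\times_{\lim_{s<t}Y_s}\lim_{s<t}L_s\in M$ extends the lift to stage $t$. The point requiring care is that the stagewise lifts are compatible, so that they assemble into a genuine morphism in $\Pro(\cC)$.

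For part (2), to produce the factorization $Mor(\Pro(\cC))=Sp^{\cong}(M)\circ Lw^{\cong}(N)$ I would first reindex, replacing the given morphism, up to isomorphism in $\Pro(\cC)$, by a natural transformation $\{f_t\colon X_t\to Y_t\}_{t\in T}$ indexed by a cofinite directed poset $T$. Then I would factor by induction on $T$: assuming $L_s$ together with $q_s\in N$ and $p_s$ built compatibly for $s<t$, I form $P_t:=Y_t\times_{\lim_{s<t}Y_s}\lim_{s<t}L_s$, note the canonical map $X_t\to P_t$ induced by $f_t$ and the partial factorization, and factor it in $\cC$ as $X_t\xrightarrow{q_t}L_t\xrightarrow{\bar p_t}P_t$ with $q_t\in N$ and $\bar p_t\in M$ using $Mor(\cC)=M\circ N$. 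Defining $p_t$ as $\bar p_t$ followed by the projection $P_t\to Y_t$ makes $q$ level-wise in $N$ and makes the relative map of $p$ at each $t$ equal to $\bar p_t\in M$, i.e. $p$ a special $M$-map. The two delicate points are the reindexing step---precisely where the paper corrects a standard reference---and verifying that the $L_t$ assemble into a $T$-diagram, both of which rest on the cofinite structure.

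For part (3) I would combine the two engines with the retract calculus. The factorization axiom is immediate from (2) together with $Sp^{\cong}(M)\subseteq R(Sp^{\cong}(M))$. By (1) we have $^{\perp}R(Sp^{\cong}(M))={}^{\perp}M$, and a cofinal argument shows a level-wise $N$-map lifts against any constant $M$-map---any square into a constant object factors through a single level---so $Lw^{\cong}(N)\subseteq{}^{\perp}R(Sp^{\cong}(M))$; this supplies the easy halves $Lw^{\cong}(N)\subseteq{}^{\perp}R(Sp^{\cong}(M))$ and $R(Sp^{\cong}(M))\subseteq(Lw^{\cong}(N))^{\perp}$. For the converse halves I would run the retract argument on the factorization from (2): if $g\in(Lw^{\cong}(N))^{\perp}$, write $g=p\circ q$ with $q\in Lw^{\cong}(N)$ and $p\in Sp^{\cong}(M)$; lifting against $q$ exhibits $g$ as a retract of $p$, so $g\in R(Sp^{\cong}(M))$, which is retract-closed by definition---yielding $(Lw^{\cong}(N))^{\perp}=R(Sp^{\cong}(M))$ cleanly. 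The main obstacle is the last inclusion $^{\perp}R(Sp^{\cong}(M))\subseteq Lw^{\cong}(N)$: the dual retract argument only places such an $f$ in $R(Lw^{\cong}(N))$, so I must show that $Lw^{\cong}(N)$ is \emph{itself} closed under retracts. I expect to obtain this by a reindexing argument realizing a pro-retract level-wise, thereby reducing to closure of $N$ under retracts in $\cC$---which is automatic in the weak factorization system case, where $N={}^{\perp}M$. This retract-closure of $Lw^{\cong}(N)$ is the one genuinely pro-categorical difficulty in the proposition.
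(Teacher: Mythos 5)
Your overall route is the paper's route, nearly lemma for lemma: your part (1) is Lemma ~\ref{c:ret_lift} together with the inductive lifting argument of Lemma ~\ref{l:SpMo_is_Mo}; your part (2) is Proposition ~\ref{p_f} (reindex via Propositions ~\ref{every map natural} and ~\ref{p_onto}, then run the Reedy construction of Section ~\ref{ss_reedy}); and in part (3) your ``easy halves'' and your retract arguments are exactly Lemmas ~\ref{l:MN_LWSP}, ~\ref{l_lift} and ~\ref{l_lift_wfs}. The single point where you diverge is the step you yourself flag as the genuinely pro-categorical difficulty, the identity $R(Lw^{\cong}(N)) = Lw^{\cong}(N)$, and there your plan has a real gap.

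First, part (3) assumes only $Mor(\mcal{C}) = M\circ N$ and $N\perp M$; it does \emph{not} assume $N$ is closed under retracts --- the weak factorization system case is only the parenthetical special case, and under the stated hypotheses one gets $R(N)={}^{\perp}M$, which may be strictly larger than $N$. So a proof that ``reduces to closure of $N$ under retracts in $\cC$'' establishes the proposition only under a stronger hypothesis than the one stated. Second, and more seriously, the proposed reduction itself does not go through: if $X\xrightarrow{i} A\xrightarrow{r} X$ is a retract diagram in $Pro(\cC)$ with $ri=id$, and $A\to B$ is levelwise $N$, no reindexing rectifies this into a \emph{levelwise} retract diagram, because after rectification the composites available at a fixed level are maps $X_{s'}\to A_t\to X_s$ that equal structure maps of $X$ rather than identities; so the levels of $X\to Y$ are never exhibited as retracts in $\cC$ of levels of $A\to B$, and one does not land in $Lw^{\cong}(R(N))$ by this method. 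What is actually true --- and what the paper uses as Lemma ~\ref{l:ret_lw}, citing \cite{Isa}, Proposition 2.2 --- is that $R(Lw^{\cong}(N)) = Lw^{\cong}(N)$ for an \emph{arbitrary} class $N$, with no retract-closure hypothesis. Its proof is a different kind of reindexing: one uses the idempotent $ir$ to build telescope-type towers whose levels are (composites of structure maps with) the original levels of $A\to B$, hence still maps in $N$, and shows these towers are isomorphic in $Pro(\cC)$ to $X$ and $Y$; this is precisely why the statement needs no hypothesis on $N$. Once your expected lemma is replaced by this statement (cited or proved along these lines), the rest of your part (3) is the paper's proof of Proposition ~\ref{p_wfs} and closes correctly.
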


In the proof Proposition ~\ref{p:fact} part (2) we use the classical theorem saying that for every small directed category $I$ there exist a cofinite directed set $A$ and a cofinal functor: $p:A\to I.$ In \cite{Isa}, Isaksen gives two references to this theorem. one is \cite{EH} Theorem 2.1.6 and the other is \cite{SGA4-I} Proposition 8.1.6. We take this opportunity to explain and correct a slight error in the proof given in \cite{EH}.

The proof of Proposition ~\ref{p:fact} is strongly based on \cite{Isa} sections 4 and 5, and most of the ideas can be found there. The main novelty in this paper is the following theorem, proved in Section ~\ref{s_ff}:
\begin{thm}\label{t_main}
Let $\mcal{C}$ be a category that has finite limits, and let $N,M$ be classes of morphisms in $\cC$. Then:
\begin{enumerate}
\item If $Mor(\mcal{C}) =^{func} M\circ N$ then $Mor(Pro(\mcal{C})) =^{func}  Sp^{\cong}(M)\circ Lw^{\cong}(N)$.
\item If $Mor(\mcal{C}) = ^{func}M\circ N$ and $N\perp M$ (in particular, if $(N,M)$ is a functorial weak factorization system in $\cC$), then $(Lw^{\cong}(N),R(Sp^{\cong}(M)))$ is a functorial weak factorization system in $Pro(\cC)$.
\end{enumerate}
\end{thm}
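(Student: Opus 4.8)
The theorem upgrades Proposition \ref{p:fact} (parts 2 and 3) from the non-functorial setting to the functorial one. I need to produce, for $\Pro(\cC)$, a *functorial* factorization into $Lw^{\cong}(N)$ followed by $Sp^{\cong}(M)$, assuming such a functorial factorization exists in $\cC$. The non-functorial existence is already granted, so the whole game is to check that all the constructions in the proof of Proposition \ref{p:fact}(2) can be made natural in the morphism being factored.

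**The strategy.** Let me think about what the existing (non-functorial) proof of Proposition \ref{p:fact}(2) must do, since my functorial proof will follow its skeleton. Given a morphism $f\colon X\to Y$ in $\Pro(\cC)$, one first replaces it by an isomorphic morphism arising from a natural transformation over a common cofinite directed indexing poset $T$ (this reindexing step uses the classical cofinal-functor theorem mentioned in the excerpt). Then one builds the factorization *by induction over $T$*, using the cofiniteness: at each $t\in T$ one has already constructed $L_s$ for all $s<t$, one forms the matching object $Y_t\times_{\lim_{s<t}Y_s}\lim_{s<t}X_s$, and one applies the given factorization in $\cC$ to the induced map from $X_t$ into this matching object, producing $L_t$ together with a map in $N$ and a map in $M$. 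The $N$-maps assemble to a level-wise $N$-map (hence a member of $Lw^{\cong}(N)$) and the $M$-maps assemble, by construction, to a special $M$-map (a member of $Sp^{\cong}(M)$). So my plan is: carry out exactly this inductive construction, but feed it the *functorial* factorization $(q_{(-)},p_{(-)})$ in $\cC$ rather than an unstructured one, and then verify functoriality of the output.

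**Carrying out the functorial version.** The plan is to define a functor $\Pro(\cC)^{\Delta^1}\to\Pro(\cC)^{\Delta^2}$ as follows. First I would make the reindexing to a common cofinite directed poset functorial: a morphism in $\Pro(\cC)^{\Delta^1}$ is a commuting square, and I must reindex the square so that both its horizontal arrows become level maps over cofinite directed posets compatibly. This is the first delicate point — the cofinal-reindexing theorem gives one poset per object, so for a square I must choose a common indexing system functorially; I expect to handle this by passing to a suitable category of indexing data or by using a fixed functorial cofinite-reindexing construction applied to the whole morphism diagram at once. Second, once everything lives over cofinite directed posets with level maps, I perform the transfinite (matching-object) induction over $T$ using the functorial factorization in $\cC$. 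Because at stage $t$ everything in sight — the objects $X_t,Y_t$, the limits $\lim_{s<t}X_s,\lim_{s<t}Y_s$, the matching object, and the induced map into it — is built by *finite limits and the functor* $(q_{(-)},p_{(-)})$ out of data produced at earlier stages, and finite limits and a functor are themselves functorial, an induction on $T_t$ shows that the assignment $t\mapsto L_t$ together with the structure maps is natural in the input morphism. The maps $L_{(l,k)}$ required by the definition of a functorial factorization then come precisely from the functoriality of $\cC$'s factorization applied level-wise, glued along the matching-object structure.

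**The main obstacle.** The genuinely hard part is the \emph{compatibility of reindexing}: a single object can be reindexed to a cofinite directed poset, but to get a functor on $\Pro(\cC)^{\Delta^1}$ I need this reindexing to be natural with respect to arbitrary morphisms of $\Pro(\cC)$, which do not respect indexing categories at all (as the excerpt stresses, a general pro-map need not even preserve the indexing set). My plan is to isolate this as a separate functorial-reindexing lemma — showing that the passage ``pro-morphism $\rightsquigarrow$ level morphism over a cofinite directed poset'' can be organized into a functor up to the natural equivalence that $Lw^{\cong}$ and $Sp^{\cong}$ already build in (recall both classes are defined up to isomorphism, which gives me exactly the slack needed). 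Once that lemma is in place, the inductive matching-object construction is a routine naturality bookkeeping, and part (2) follows. For part (2)$\Rightarrow$(3), I would then invoke Proposition \ref{p:fact}(1) and (3): the lifting-property identities $^{\perp}R(Sp^{\cong}(M))={}^{\perp}M$ and its companion are already established there and are independent of functoriality, so combining them with the functorial factorization from part (2) immediately yields that $(Lw^{\cong}(N),R(Sp^{\cong}(M)))$ is a \emph{functorial} weak factorization system, completing the proof.
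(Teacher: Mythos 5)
Your skeleton (reindex to a level map over a cofinite directed poset, then run the matching-object/Reedy induction feeding it the functorial factorization of $\cC$) is indeed the skeleton of the paper's proof, and your deduction of part (2) from part (1) together with Proposition~\ref{p:fact} is fine. But the proposal stops exactly where the real work begins: the ``functorial-reindexing lemma'' you defer to is the entire content of the theorem, and you neither prove it nor give a formulation that could be proved. The difficulty is not only that each object of $Pro(\cC)^{\Delta^1}$ must be replaced by a level map over a cofinite directed poset; it is that even after doing so, a morphism in $Pro(\cC)^{\Delta^1}$ between two such level maps is still an abstract commuting square of pro-morphisms, with no preferred presentation by index-compatible data. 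Your inductive construction of the comparison map between two Reedy factorizations needs concrete level representatives of that square; any such representative involves choices (a reindexing function together with a natural transformation), different choices a priori yield different induced maps, and functoriality (compatibility with composition of squares) cannot even be stated before the induced map is shown to be independent of the choice. The ``slack'' you invoke from $Lw^{\cong}$ and $Sp^{\cong}$ being isomorphism-closed does not address this: isomorphism-invariance lets one replace a factorization by an isomorphic one, but a functor must assign actual morphisms to actual morphisms coherently, and no amount of closing classes under isomorphism produces that coherence.

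The paper overcomes this with a device absent from your proposal: the category $\barr{Pro}(\cC)$ of Section~\ref{s:barr}, whose objects are diagrams indexed by cofinite directed posets of infinite height and whose morphisms are \emph{connected components} of posets of pre-morphisms $(\alpha,\phi)$ with $\alpha$ strictly increasing. The proof of Theorem~\ref{t_fact} then (i) reduces, via the equivalences $\barr{Pro}(\cC^{\Delta^n})\simeq Pro(\cC^{\Delta^n})\simeq Pro(\cC)^{\Delta^n}$ (the second is Meyer's theorem) and via the notion of \emph{weak} functorial factorization together with Corollary~\ref{c_wff_to_ff}, to constructing a genuine section of the composition functor $\barr{Pro}(\cC^{\Delta^2})\to\barr{Pro}(\cC^{\Delta^1})$ --- this is how the reindexing coherence problem is dissolved, by transferring along equivalences rather than by any reindexing functor on $Pro(\cC)^{\Delta^1}$ itself; (ii) defines, recursively, for each representative pre-morphism an induced map $\chi$ between the Reedy factorizations; and (iii) proves well-definedness on connected components: the $\chi$-construction is monotone in the representative (Lemma~\ref{l_reedy wd}) and any two representatives of the same morphism admit a common upper bound (Corollary~\ref{c_directed}, which is where infinite height and strict monotonicity of $\alpha$ are needed). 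Steps (ii)--(iii) are what you dismiss as ``routine naturality bookkeeping,'' but they are the heart of the argument and cannot even be set up without the $\barr{Pro}$ formalism (or an equivalent device). So the proposal correctly locates the obstacle but does not supply the idea that removes it, and as written it has a genuine gap.
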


The factorizations constructed in the proof of Proposition ~\ref{p:fact} and Theorem ~\ref{t_main} both use Reedy type factorizations (see Section ~\ref{ss_reedy}). These are precisely the factorizations constructed by Edwards and Hastings in \cite{EH} and by Isaksen in \cite{Isa}. The main novelty here is that we show that these factorizations can be made functorial (given a functorial factorization in the original category). Our main tool in proving this will be defining a category equivalent to $Pro(\cC)$, which we call $\barr{Pro}(\cC)$. This category can be thought of as another model for $Pro(\cC)$, and we believe it might also be convenient for other applications. We now describe briefly the category $\barr{Pro}(\cC)$, for more details see Section ~\ref{s:barr}.

Let $A$ be a cofinite directed set. We will say that $A$ has infinite hight if for every $a\in A$ there exist $a'\in A$ s.t. $a<a'$. An object in $\barr{Pro}(\cC)$ is a diagram $F:A\to \cC$, s.t. $A$ is a cofinite directed set of infinite hight. If $F:A\to \cC,G:B\to\cC$ are objects in $\barr{Pro}(\cC)$, A \emph{pre morphism} from $F$ to $G$ is a defined to be a pair $(\alpha,\phi)$, s.t. $\alpha:B\to A$ is a strictly increasing function, and $\phi:\alpha^*F=F\circ\alpha\to G$ is a natural transformation.

We define a partial order on the set of pre morphisms from $F$ to $G$ by setting $(\alpha',\phi')\geq(\alpha,\phi)$ iff for every $b\in B$ we have $\alpha'(b)\geq\alpha(b)$, and the following diagram commutes:
$$
\xymatrix{ & F(\alpha(b)) \ar[dr]^{\phi_b} & \\
F(\alpha'(b))\ar[ur]\ar[rr]^{\phi'_b} & & G(b),}
$$
(the arrow $F(\alpha'(b))\to F(\alpha(b))$ is of course the one induced by the unique morphism $\alpha'(b)\to \alpha(b)$ in $A$).

For $F,G \in \barr{Pro}(\cC)$ we denote by $P(F,G)$ the poset of pre-morphisms from $F$ to $G$.
We now define a \emph{morphism} from $F$ to $G$ in $\barr{Pro}(\cC)$ to be a connected component of $P(F,G)$. We will show (see Corollary ~\ref{c_directed}) that every such connected component is a directed poset. Composition in $\barr{Pro}(\cC)$ is defined by the formula:
$$(\beta,\psi)\circ(\alpha,\phi)=(\alpha\circ\beta,\psi\circ\phi_{\beta})$$.

We construct a natural functor $i:\barr{Pro}(\cC)\to Pro(\cC)$ (the object function of this functor being the obvious one). We show that $i$ is a subcategory inclusion, that is essentially surjective. It follows that $i$ is a categorical equivalence.

When working with pro-categories, it is frequently useful to have some kind of homotopy theory of pro-objects. Model categories, introduced in \cite{Qui}, provide a very general context in which it is possible to set up the basic machinery of homotopy theory. Given a category $\cC$, it is thus desirable to find conditions on $\cC$ under which $Pro(\cC)$ can be given a model structure. It is natural to begin with assuming that $\cC$ itself has a model structure, and look for a model structure on $Pro(\cC)$ which is in some sense induced by that of $\cC$. The following definition is based on the work of Edwards and Hastings \cite{EH}, Isaksen \cite{Isa} and others:

\begin{define}
Let $(\cC,\cW,\cF,\cC of)$ be a model category. The strict model structure on $Pro(\cC)$ (if it exists) is defined by letting the acyclic cofibrations be $^{\perp}\cF$ and the cofibrations be  $^{\perp}(\cW\cap\cF)$.
\end{define}

This model structure is called the strict model structure on $Pro(\cC)$ because several other model structures on the same category can be constructed from it through localization (which enlarges the class weak equivalences).

From Proposition ~\ref{p:fact} it clearly follows that in the strict model structure, if it exists, the cofibrations are given by $Lw^{\cong}(\cC of)$, the acyclic cofibrations are given by $Lw^{\cong}(\cW\cap\cC of)$, the fibrations are given by $R(Sp^{\cong}(\cF))$ and the acyclic fibrations are given by $R(Sp^{\cong}(\cF\cap\cW))$. The weak equivalences can then be characterized as maps that can be decomposed into an acyclic cofibration followed be an acyclic fibration.

Edwards and Hastings, in \cite{EH}, give sufficient conditions on a model category $\cC$ for the strict model structure on $Pro(\cC)$ to exist. Isaksen, in \cite{Isa}, gives different sufficiant conditions on $\cC$ and also shows that under these conditions the weak equivalences in the strict model structure on $Pro(\cC)$ are given by $Lw^{\cong}(\cW)$.

\begin{rem}
It should be noted that we are currently unaware of any example of a model category $\cC$ for which one can show that the strict model structure on $Pro(\cC)$ does not exist.
\end{rem}

The existence of the strict model structure implies that every map in $Pro(\cC)$ can be factored into a (strict) cofibration followed by a (strict) trivial fibration, and into a (strict) trivial cofibration followed by a (strict) fibration. However, the existence of functorial factorizations of this form was not shown, and remained an open problem (see \cite{Isa}  Remark 4.10 and \cite{Cho}). The existence of functorial factorizations in a model structure is important for many constructions (such as framing, derived functor (between the model categories themselves) and more). In more modern treatments of model categories (such as \cite{Hov} or \cite{Hir}) it is even part of the axioms for a model structure.

From Theorem ~\ref{t_main} it clearly follows that if $\cC$ is a model category in the sense of \cite{Hov} or \cite{Hir}, that is, a model category with functorial factorizations, and if the strict model structure on $Pro(\cC)$ exists, then the model structure on $Pro(\cC)$ also admits functorial factorizations.

\section{Preliminaries on Pro-Categories}\label{s:prelim}

In this section we bring a short review of the necessary background on pro-categories. Some of the definitions and lemmas given here are slightly non standard. For more details we refer the reader to \cite{AM}, \cite{EH}, and \cite{Isa}.

\begin{define}\label{d_directed}
A category $I$ is called \emph{cofiltered} (or directed) if the following conditions are satisfied:
\begin{enumerate}
\item $I$ is non-empty.
\item for every pair of objects $s,t \in I$, there exists an object $u\in I$, together with
morphisms $u\to s$ and $u\to t$.
\item for every pair of morphisms $f,g:s\to t$ in $I$, there exists a morphism $h:u\to s$ in $I$, s.t. $f\circ h=g\circ h$.
\end{enumerate}
\end{define}

If $T$ is a partially ordered set, then we view $T$ as a category which has a single morphism $u\to v$ iff $u\geq v$. Note that this convention is opposite from the one used by some authors. Thus, a poset $T$ is directed iff $T$ is non-empty, and for every $a,b\in T$, there exist $c\in T$, s.t. $c\geq a,c\geq b$. In the following, instead of saying "a directed poset" we will just say "a directed set".

\begin{define}\label{def CDS}
A cofinite poset is a poset $T$ s.t. for every $x \in T$ the set $T_x:=\{z\in T| z \leq x\}$ is finite.
\end{define}

\begin{define}\label{def_deg}
Let $A$ be a cofinite poset. We define the degree function of $A$: $d=d_A:A\to\NN$, by:
$$d(a):=max\{n\in \NN|\exists a_0<...<a_n=a\}.$$
For every $n\geq -1$ we define: $A^n:=\{a\in A|d(a)\leq n\}$ $(A^{-1}=\phi)$.
\end{define}
Thus $d:A\to\NN$ is a strictly increasing function. The degree function enables us to define or prove things concerning $A$ inductively, since clearly: $A=\cup_{n\geq 0}A^n$. Many times in this paper, when defining (or proving) something inductively, we will skip the base stage. This is because we begin the induction from $n=-1$, and since $A^{-1}=\phi$ there is nothing to define (or prove) in this stage. the sceptic reader can check carefully the first inductive step to see that this is justified.

We shall use repeatedly the following notion:
\begin{define}\label{d:Reysha}
Let $T$ be a partially ordered set, and let $A$ be a subset of $T$. We will say that $A$ is a Reysha of $T$, if $x\in A,y\in T, y <x$, implies: $y\in A$.
\end{define}

\begin{example}
$T$ is a Reysha of $T$. If $t\in T$ is a maximal element, then $T \backslash \{t\}$ is a Reysha of $T$. For any $t\in T$: $T_t$ (see ~\ref{def CDS}) is a Reysha of $T$.
\end{example}

\begin{define}
Let $C$ be a category. The category $C^{\lhd}$ has as objects: $Ob(C)\coprod{\infty}$, and the morphisms are the morphisms in $C$, together with a unique morphism: $\infty\to c$, for every $c\in C$.
\end{define}

In particular, if $C=\phi$ then $C^{\lhd}=\{\infty\}$.

Note that if $A$ is a cofinite poset, $a\in A$  and  $n=d(a)$ then $A_a$ is naturally isomorphic to $(A_a^{n-1})^{\lhd}$ (where $A_a^{n-1}$ is just $(A_a)^{n-1}$).

\begin{lem}\label{l:eqiv_directed}
A cofinite poset $A$ is directed iff for every finite Reysha $R \subset A$ (see Definition ~\ref{d:Reysha}),
there exist an element $c \in A$ such that $c\geq r$, for every $r\in R$.
A category $\cC$ is directed iff for every finite poset $R$, and for every functor $F:R\to \cC$, there exist $c\in \cC$, together with compatible morphisms $c\to F(r)$, for every $r\in R$ (that is, a morphism $Diag(c)\to F$ in $\cC^R$, or equivalently we can extend the functor $F:R\to T$ to a functor $R^{\lhd}\to \cC$).
\end{lem}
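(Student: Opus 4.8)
The plan is to treat the two assertions separately, since the first concerns cofinite posets while the second concerns arbitrary categories, and only the second is delicate.

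For the first assertion, the forward implication requires nothing beyond directedness: by induction on $|R|$ using condition (2) of the definition of a directed poset, every finite subset of a directed poset $A$—in particular every finite Reysha—has a common upper bound. For the converse I would first take $R=\emptyset$ (a finite Reysha) to conclude that $A$ is non-empty. Then, given $a,b\in A$, I would form $A_a\cup A_b$; this is a Reysha, being a union of down-sets, and it is finite precisely because $A$ is cofinite. Applying the hypothesis to this finite Reysha produces $c\in A$ with $c\geq r$ for all $r\in A_a\cup A_b$, and since $a\in A_a$ and $b\in A_b$ we get $c\geq a$ and $c\geq b$, so $A$ is directed. The sole use of cofiniteness is in guaranteeing that $A_a\cup A_b$ is finite.

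For the forward direction of the second assertion, given a functor $F\colon R\to\cC$ from a finite poset (indeed any finite category), I would build a cone in two stages. First, since $R$ has finitely many objects, repeated use of condition (2) yields an object $w$ equipped with maps $g_r\colon w\to F(r)$ for every $r$. These need not be compatible: for each morphism $\mu\colon r\to r'$ of $R$, the two maps $g_{r'}$ and $F(\mu)\circ g_r$ from $w$ to $F(r')$ may differ. As $R$ has only finitely many morphisms, I would then equalize all these parallel pairs at once by iterating condition (3): equalize the first pair by some $h_1\colon c_1\to w$, then the second by $h_2\colon c_2\to c_1$ (which preserves the first equality, being precomposed), and so on; the composite $h$ of all these maps equalizes every pair simultaneously. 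The maps $g_r\circ h$ then constitute a cone over $F$.

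The converse of the second assertion is the crux and the only genuinely delicate point. Conditions (1) and (2) are immediate, by applying the cone hypothesis to the empty poset (which forces $\cC\neq\emptyset$) and to the two-point discrete poset $\{1,2\}$ with $F(1)=s,\ F(2)=t$. The difficulty is condition (3), since a parallel pair $f,g\colon s\to t$ is not a poset diagram. My key idea is to encode it in the height-one poset $Q=\{a,b,c,d\}$ with $a,b\geq c$ and $a,b\geq d$ (with $a,b$ incomparable and $c,d$ incomparable), setting $F(a)=F(b)=F(c)=s$, $F(d)=t$, and $F(a\to c)=F(b\to c)=\mathrm{id}_s$, $F(a\to d)=f$, $F(b\to d)=g$. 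Because $c$ and $d$ are incomparable there is no composite $a\to c\to d$, so no functoriality constraint forces $f=g$ and $F$ is a legitimate functor. A cone over $F$ with apex $X$ has legs $\pi_a,\pi_b,\pi_c\colon X\to s$ and $\pi_d\colon X\to t$; naturality along the maps into $c$ forces $\pi_a=\pi_c=\pi_b$, while naturality along the maps into $d$ gives $f\circ\pi_a=\pi_d=g\circ\pi_b$. Hence $h:=\pi_a$ satisfies $f\circ h=g\circ h$, which is exactly condition (3). I expect this square-shaped diagram to carry the entire weight of the argument; once it is in hand, the remainder is routine induction and bookkeeping.
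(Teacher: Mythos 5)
Your proof is correct. There is essentially nothing in the paper to compare it against: the authors' entire proof of this lemma is the single word ``Clear.'', so what you have written supplies precisely the details they chose to omit. Your handling of the first assertion and of the forward direction of the second is the routine argument they presumably had in mind (the down-closure $A_a\cup A_b$ is a finite Reysha by cofiniteness; for a finite poset diagram, first dominate all objects, then equalize the finitely many parallel pairs one at a time, noting that each new equalization is preserved under further precomposition). The one genuinely non-obvious point is the converse of the second assertion---extracting condition (3) of directedness from the cone hypothesis, since a parallel pair $f,g\colon s\to t$ is not itself a poset diagram---and your four-element crown poset $Q$ with $F(a)=F(b)=F(c)=s$, $F(d)=t$, $F(a\to c)=F(b\to c)=\mathrm{id}_s$, $F(a\to d)=f$, $F(b\to d)=g$ handles it cleanly: the legs into $c$ force $\pi_a=\pi_c=\pi_b$, and the legs into $d$ then give $f\circ\pi_a=\pi_d=g\circ\pi_a$, so $h:=\pi_a$ equalizes the pair. (The functor is legitimate exactly because $c$ and $d$ are incomparable, so no composite imposes $f=g$.) This is the standard device for encoding equalizer conditions into poset-indexed diagrams, and it is the one piece of content that the word ``Clear.'' genuinely sweeps under the rug; it is also worth noting that this converse is never actually used elsewhere in the paper, which may explain the authors' brevity.
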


\begin{proof}
Clear.
\end{proof}
A category is called small if it has a small set of objects and a small set of morphisms

\begin{define}\label{def_pro}
Let $\mcal{C}$ be a category. The category $\Pro(\mcal{C})$ has as objects all diagrams in $\cC$ of the form $I\to \cC$ s.t. $I$ is small and directed (see Definition ~\ref{d_directed}). The morphisms are defined by the formula:
$$\Hom_{\Pro(\mcal{C})}(X,Y):=\lim\limits_s \colim\limits_t \Hom_{\mcal{C}}(X_t,Y_s).$$
Composition of morphisms is defined in the obvious way.
\end{define}

Thus, if $X:I\to \mcal{C},Y:J\to \mcal{C}$ are objects in $\Pro(\mcal{C})$, giving a morphism $X\to Y$ means specifying, for every $s\in J$ a morphism $X_t\to Y_s$ in $\mcal{C}$, for some $t\in I$. These morphisms should of course satisfy some compatibility condition. In particular, if the indexing categories are equal: $I=J$, then any natural transformation: $X\to Y$ gives rise to a morphism $X\to Y$ in $\Pro(C)$. More generally, if $\alpha:J\to I$ is a functor, and $\phi:\alpha^*X\to Y$ is a natural transformation, then the pair $(\alpha,\phi)$ determines a morphism $X\to Y$ in $\Pro(C)$ (for every $s\in J$ we take the morphism $\phi_s:X_{\alpha(s)}\to Y_s$).

Let $f:X\to Y$ be a morphism in $Pro(\cC)$. A morphism in $\cC$ of the form $X_r\to Y_s$, that represents the $s$ coordinate of $f$ in $\colim\limits_{t\in I} \Hom_{\mcal{C}}(X_t,Y_s)$ will be called "representing $f$".

The word pro-object refers to objects of pro-categories. A \emph{simple} pro-object
is one indexed by the category with one object and one (identity) map. Note that for any category $\mcal{C}$, $\Pro(\mcal{C})$ contains $\mcal{C}$ as the full subcategory spanned by the simple objects.

\begin{define}\label{def natural}
Let $\mcal{C}$ be a category with finite limits, $M \subseteq Mor(\mcal{C})$ a class of morphisms in $\mcal{C}$, $I$ a small category and $F:X\to Y$ a morphism in $\mcal{C}^I$. Then $F$ will be called:
\begin{enumerate}
\item A levelwise $M$-map, if for every $i\in I$: the morphism $X_i\to Y_i$ is in $M$. We will denote this by $F\in Lw(M)$.
\item A special $M$-map, if the following holds:
    \begin{enumerate}
    \item The indexing category $I$ is a cofinite poset (see Definition ~\ref{def CDS}).
    \item The natural map $X_t \to Y_t \times_{\lim \limits_{s<t} Y_s} \lim \limits_{s<t} X_s $ is in $M$, for every $t\in I$.
    \end{enumerate}
    We will denote this by $F\in Sp(M)$.
\end{enumerate}
\end{define}

Let $\mcal{C}$ be a category. Given two morphisms $f,g \in Mor(\cC)$ we denote by $f \perp g$ to say
that $f$ has the left lifting property w.r.t $g$.
If $M,N\subseteq Mor(C)$, we denote by $M \perp N$ to say that $f\perp g$ for every $f \in M, g\in N$.

\begin{define}\label{def mor}
Let $\mcal{C}$ be a category with finite limits, and $M \subseteq Mor(\mcal{C})$ a class of morphisms in $\mcal{C}$. Denote by:
\begin{enumerate}
\item $R(M)$ the class of morphisms in $\mcal{C}$ that are retracts of morphisms in $M$. Note that $R(R(M))=R(M)$
\item ${}^{\perp}M$ the class of morphisms in $\mcal{C}$ having the left lifting property w.r.t. any morphism in $M$.
\item $M^{\perp}$ the class of morphisms in $\mcal{C}$ having the right lifting property w.r.t. any morphism in $M$.
\item $Lw^{\cong}(M)$ the class of morphisms in $\Pro(\mcal{C})$ that are \textbf{isomorphic} to a morphism that comes from a natural transformation which is a level-wise $M$-map.
\item $Sp^{\cong}(M)$ the class of morphisms in $\Pro(\mcal{C})$ that are \textbf{isomorphic} to a morphism that comes from a natural transformation which is a special $M$-map.
\end{enumerate}
\end{define}

Note that:
$$
(M \subset  {}^\perp N) \Leftrightarrow (N \subset   M^\perp) \Leftrightarrow (M \perp N).
$$

\begin{lem}\label{l:ret_lw}
Let $M$ be any class of morphisms in $\mcal{C}$. Then $$R(Lw^{\cong}(M)) = Lw^{\cong}(M).$$
\end{lem}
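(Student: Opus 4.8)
The inclusion $Lw^{\cong}(M)\subseteq R(Lw^{\cong}(M))$ is immediate, since every morphism is a retract of itself; the content is the reverse inclusion, i.e.\ that a retract in $\Pro(\cC)$ of a levelwise $M$-map is again a levelwise $M$-map. Since $Lw^{\cong}(M)$ is by definition closed under isomorphism in the arrow category $\Pro(\cC)^{\Delta^1}$, and the relation ``is a retract of'' is transitive (an isomorphism being a special retract), I would first reduce to the following situation: $f\colon X\to Y$ is a retract of an honest natural transformation $G\colon A\to B$ over a small directed index $I$ with $G_i\in M$ for every $i$. Spelling out the retract in $\Pro(\cC)^{\Delta^1}$ gives morphisms $i_X\colon X\to A$, $r_X\colon A\to X$, $i_Y\colon Y\to B$, $r_Y\colon B\to Y$ in $\Pro(\cC)$ with $r_Xi_X=\mathrm{id}_X$, $r_Yi_Y=\mathrm{id}_Y$, $Gi_X=i_Yf$ and $r_YG=fr_X$.

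The key observation is that the two idempotents $e_X:=i_Xr_X\colon A\to A$ and $e_Y:=i_Yr_Y\colon B\to B$ in $\Pro(\cC)$ are intertwined by $G$:
$$Ge_X=Gi_Xr_X=i_Yfr_X=i_Yr_YG=e_YG.$$
Thus, if I can realize $e_X$, $e_Y$ and this single relation levelwise, then $G$ will descend to the telescopes of these idempotents, and --- crucially --- each level of the descended map is just some $G_i\in M$, so the resulting map is levelwise in $M$ rather than merely in $R(M)$. This telescope trick is exactly what lets us absorb the retract without assuming that idempotents split in $\cC$ (indeed the lemma makes no finite-limit hypothesis on $\cC$): idempotents do split in $\Pro(\cC)$ because it admits cofiltered limits, and such a splitting is computed by the telescope.

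Concretely, after invoking the standard reindexing results for pro-categories (see \cite{AM}, \cite{Isa}) I may assume, passing to a cofinal reindexing $J\to I$ that keeps $G$ a levelwise $M$-map, that $e_X$ and $e_Y$ are represented by natural endomorphisms of $A$ and $B$ over $J$ with $e_X^2=e_X$, $e_Y^2=e_Y$ and $Ge_X=e_YG$ holding strictly levelwise; only finitely many equations are involved, so a single cofinal reindexing suffices. I then form the telescopes $\tilde X$ and $\tilde Y$, indexed by $J\times\NN$, with $\tilde X_{(j,n)}=A_j$ and $\tilde Y_{(j,n)}=B_j$, whose transition maps are the $J$-transitions followed by a power of the endomorphisms $e_X$, $e_Y$ (which, being idempotent, stabilises). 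These telescopes are the splittings in $\Pro(\cC)$ of $e_X$ and $e_Y$, so $\tilde X\cong X$ and $\tilde Y\cong Y$ via the maps built from $(i_X,r_X)$ and $(i_Y,r_Y)$. The map $\tilde G\colon\tilde X\to\tilde Y$ with $\tilde G_{(j,n)}=G_j$ is a natural transformation, its naturality squares being precisely $G_je_{X,j}=e_{Y,j}G_j$, and it is a levelwise $M$-map. Under the isomorphisms $\tilde X\cong X$ and $\tilde Y\cong Y$ it is identified with $f$, since $r_YGi_X=fr_Xi_X=f$. Hence $f\in Lw^{\cong}(M)$.

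The step I expect to be the main obstacle is the levelwise realization in the third paragraph: making the idempotents $e_X$, $e_Y$ and the relation $Ge_X=e_YG$ hold strictly over a common index while preserving the levelwise $M$-map property of $G$. Everything else is formal once this reindexing is in place; the telescope construction and the final identification are then routine. I would take particular care that the reindexing functor can be chosen cofinal over $I$ (so that each level of the reindexed $G$ is still one of the original $G_i\in M$), and that the telescope genuinely computes the idempotent splitting, so that $\tilde X\cong X$ and $\tilde Y\cong Y$ hold on the nose in the arrow category.
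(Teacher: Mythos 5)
Your strategy is the right one, and it is in fact the strategy behind the proof the paper defers to (Isaksen, Proposition 2.2, cited as the proof of Lemma \ref{l:ret_lw}): absorb the retraction into the transition maps of a telescope, so that every level of the resulting level map is an honest $G_j\in M$ rather than a retract of one. Your algebra is also correct: $Ge_X=e_YG$ holds, and \emph{if} the idempotents were realized strictly levelwise, the telescope would split them and identify the descended map with $f$. The genuine gap is the step you yourself flag as the main obstacle, and the justification you offer for it is not valid. There is no standard reindexing result that makes $e_X$, $e_Y$ and the relation $Ge_X=e_YG$ hold strictly levelwise: the reindexing theorems for pro-categories (Artin--Mazur, i.e.\ Proposition \ref{every map natural} here; Meyer's theorem \cite{Mey} quoted in Section \ref{s_ff}; Isaksen's reindexing lemmas in \cite{Isa}) apply only to \emph{loopless} diagrams, and ``an object together with an idempotent endomorphism'' is precisely a diagram with a loop. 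Cofinal reindexing cannot help at all: precomposing $A\colon I\to\cC$ with a cofinal $J\to I$ merely relabels levels, and a representative of the pro-map $e_X$ remains a map $A_{k}\to A_{j}$ between two \emph{different} levels; no choice of cofinal subposet turns such data into a strictly natural family of endomorphisms, let alone idempotent ones. The obstruction is not the number of equations involved.

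Worse, the reduction you want is not just non-standard but false. Let $\cC$ have objects $a_n$ $(n\geq 0)$ and morphisms generated by $u_n,v_n\colon a_{n+1}\to a_n$ subject to $u_nv_{n+1}=v_nu_{n+1}$ and $v_nv_{n+1}=v_nu_{n+1}$. Every morphism strictly lowers $n$, so the only endomorphisms in $\cC$ are identities, hence every strict levelwise idempotent on any diagram in $\cC$ is the identity. But the tower $A=(a_n,u_n)$ carries the idempotent pro-map $e$ whose representative at level $n$ is $v_n$ (the first relation gives compatibility of representatives, the second gives $e\circ e=e$), and $e\neq\mathrm{id}_A$ because both relations have a $v$ on each side, so a $v$-free word is equal only to itself in $\cC$ and can never represent $e$. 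Thus $(A,e)$ is isomorphic to no strictly levelwise idempotent. Moreover $e$ arises exactly from your situation: $e=i r$ where $W=(a_n,v_n)$ is a retract of $A$ via $r\colon A\to W$ (represented by the $v_n$) and $i\colon W\to A$ (represented by identities), so taking $M=Mor(\cC)$, $G=\mathrm{id}_A$ and $f=\mathrm{id}_W$ your third paragraph demands the impossible rectification. The correct execution, which is what Isaksen does, is to build the telescope directly out of \emph{chosen representatives} of $e_X$ and $e_Y$: one constructs the new pro-objects over a cofinite directed index by induction on degree, at each new index element choosing the level high enough and the finitely many structure maps down to lower elements so that all required identities (functoriality of the telescope, naturality of the level map $G$) hold on the nose --- the same inductive technique used in this paper in Propositions \ref{p_full} and \ref{p_directed}. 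In other words, strictification cannot be done as a preprocessing step; it has to be interleaved with the construction of the telescope, and that interleaving is the actual content of the lemma.
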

\begin{proof}
See \cite{Isa}, Proposition 2.2.
\end{proof}

\begin{lem}\label{c:ret_lift}
Let $M$ be any class of morphisms in $\mcal{C}$. Then:
$$(R(M))^{\perp} = M^{\perp},\; {}^{\perp}(R(M)) = {}^{\perp}M,$$
$$R(M^{\perp}) = M^{\perp},\; R({}^{\perp}M) = {}^{\perp}M.$$
\end{lem}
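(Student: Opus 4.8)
The plan is to deduce all four equalities from a single standard observation, the \emph{retract argument}, which says that the lifting relation $\perp$ is stable under retracts on either side. Precisely, I would first isolate two claims: (R1) if $f\perp g$ and $f'$ is a retract of $f$, then $f'\perp g$; and (R2) if $f\perp g$ and $g'$ is a retract of $g$, then $f\perp g'$. Granting these, the four equalities are formal. Indeed, since every morphism is a retract of itself we always have $M\subseteq R(M)$, so monotonicity of $(-)^{\perp}$ gives $(R(M))^{\perp}\subseteq M^{\perp}$; conversely, if $g\in M^{\perp}$ and $f'\in R(M)$ is a retract of some $f\in M$, then $f\perp g$ and (R1) yields $f'\perp g$, so $g\in (R(M))^{\perp}$. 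This proves $(R(M))^{\perp}=M^{\perp}$, and the dual argument (using the left-lifting version of (R1)) gives ${}^{\perp}(R(M))={}^{\perp}M$. Similarly $M^{\perp}\subseteq R(M^{\perp})$ is automatic, while (R2) shows $R(M^{\perp})\subseteq M^{\perp}$: a retract $g'$ of some $g\in M^{\perp}$ satisfies $f\perp g'$ for every $f\in M$. Hence $R(M^{\perp})=M^{\perp}$, and dually $R({}^{\perp}M)={}^{\perp}M$.

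It remains to prove the retract argument itself, which is the only step involving actual work. For (R1), write the expression of $f'\colon A'\to B'$ as a retract of $f\colon A\to B$ as a commuting diagram with source maps $i\colon A'\to A$, $r\colon A\to A'$ and target maps $j\colon B'\to B$, $s\colon B\to B'$ satisfying $r\circ i=\mathrm{id}_{A'}$, $s\circ j=\mathrm{id}_{B'}$, $f\circ i=j\circ f'$ and $f'\circ r=s\circ f$. Given a lifting problem for $f'$ against $g$ with top map $u\colon A'\to X$ and bottom map $v\colon B'\to Y$ (so $g\circ u=v\circ f'$), I would precompose to obtain a lifting problem for $f$ against $g$ with top map $u\circ r$ and bottom map $v\circ s$; a short check using the displayed relations shows this square commutes. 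Since $f\perp g$, it admits a lift $\ell\colon B\to X$, and then $\ell\circ j\colon B'\to X$ solves the original problem, as one verifies from $j\circ f'=f\circ i$ and $s\circ j=\mathrm{id}_{B'}$ together with the two lift identities. Claim (R2) is proved by the mirror-image chase, transporting a lifting problem for $f$ against $g'$ up to $g$ along the retract data for $g$ and then restricting the resulting lift; equivalently, (R2) is just (R1) applied in the opposite category $\cC^{\mathrm{op}}$, under which left and right lifting are interchanged and retracts are preserved.

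The only point that requires care is the bookkeeping in this diagram chase --- matching the correct retract maps to the correct sides and checking the two commutativities --- after which the four equalities drop out purely formally. I expect no genuine obstacle beyond this routine verification, and the duality between the right- and left-handed statements removes any need to repeat the argument twice.
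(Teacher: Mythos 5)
Your proposal is correct: the paper's entire proof is the statement ``Easy diagram chase,'' and your write-up (the retract argument (R1)/(R2) plus the formal deduction of the four equalities) is exactly that chase carried out in detail, so you are taking the same approach the paper intends. The only cosmetic point is that your parenthetical ``left-lifting version of (R1)'' is really your (R2), as you effectively acknowledge when you note (R2) is (R1) in $\cC^{\mathrm{op}}$.
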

\begin{proof}
Easy diagram chase.
\end{proof}

\section{Factorizations in pro categories}\label{s_fact}

The main purpose of this section is to prove Proposition ~\ref{p:fact}. It is done in Lemma ~\ref{l:SpMo_is_Mo} and Propositions ~\ref{p_f} and ~\ref{p_wfs}. We also explain and correct a slight error in \cite{EH} Theorem 2.1.6.

Throughout this section, let $\cC$ be a category that has finite limits and let $N,M$ be classes of morphisms in $\cC$.

\subsection{A lifting Lemma}\label{ss_lift}

This subsection is devoted to proving the following lemma:

\begin{lem}\label{l:SpMo_is_Mo}
${}^{\perp}Sp^{\cong}(M) = {}^{\perp}M$.
\end{lem}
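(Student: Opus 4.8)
The plan is to prove the two inclusions separately; the inclusion ${}^{\perp}Sp^{\cong}(M)\subseteq{}^{\perp}M$ is formal, while the reverse inclusion requires an inductive construction of a lift over a cofinite poset.

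For the easy inclusion, I would first observe that every simple $M$-map is a special $M$-map. Taking the one-point poset $T=\{*\}$, the set $\{s<*\}$ is empty, so the matching object $Y_*\times_{\lim_{s<*}Y_s}\lim_{s<*}X_s$ is just $Y_*$ (an empty limit is terminal), and the relative matching map of Definition ~\ref{def natural} becomes the original map $X_*\to Y_*$. Hence, viewing $\cC$ inside $\Pro(\cC)$ as the full subcategory of simple objects, we have $M\subseteq Sp^{\cong}(M)$, and since the operation ${}^{\perp}(-)$ is order-reversing this yields ${}^{\perp}Sp^{\cong}(M)\subseteq{}^{\perp}M$.

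For the reverse inclusion, let $g:A\to B$ be a morphism in $\Pro(\cC)$ with $g\in{}^{\perp}M$. Because lifting properties are invariant under isomorphism of the target arrow and $Sp^{\cong}(M)$ is closed under isomorphism, it suffices to solve every lifting problem against a morphism genuinely induced by a special $M$-map. So fix a small cofinite directed poset $T$, objects $X,Y:T\to\cC$ of $\Pro(\cC)$, and a special $M$-map $F:X\to Y$, together with a commutative square having $g$ on the left, $F$ on the right, top edge $A\to X$ and bottom edge $B\to Y$. I would build the diagonal $B\to X$ by induction on the degree function $d=d_T$ of Definition ~\ref{def_deg}. At stage $t$ the set $\{s<t\}$ is finite (Definition ~\ref{def CDS}), so the matching objects $\lim_{s<t}X_s$ and $\lim_{s<t}Y_s$ are finite limits and exist in $\cC$; by the inductive hypothesis the already-constructed lifts $\ell_s:B\to X_s$ for $s<t$ assemble into a map $B\to\lim_{s<t}X_s$, which together with the composite $B\to Y\to Y_t$ determines a map $B\to Y_t\times_{\lim_{s<t}Y_s}\lim_{s<t}X_s$. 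By hypothesis the relative matching map $X_t\to Y_t\times_{\lim_{s<t}Y_s}\lim_{s<t}X_s$ lies in $M$, so the square with $g$ on the left, this relative matching map on the right, top edge $A\to X\to X_t$ and bottom edge the map just built admits a solution $\ell_t:B\to X_t$, since $g\in{}^{\perp}M$.

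It then remains to check that the family $(\ell_t)_{t\in T}$ defines a morphism $B\to X$ in $\Pro(\cC)$ solving the original problem. Since each $X_t$ is a simple object, $\Hom_{\Pro(\cC)}(B,X)=\lim_t\Hom_{\Pro(\cC)}(B,X_t)$, so I only need the $\ell_t$ to be compatible with the transition maps of $X$; but this is exactly how $\ell_t$ was chosen, since composing $\ell_t$ with the projection of the matching object onto $\lim_{s<t}X_s$ recovers the map from the previous stage, giving $(X_t\to X_s)\circ\ell_t=\ell_s$ for every $s<t$, and transitivity of the transition maps upgrades this to a compatible family over all of $T$. Projecting the level-$t$ lifting identities onto $Y_t$ and onto $X_t$ shows that the resulting $\ell:B\to X$ satisfies $F\circ\ell=(B\to Y)$ and $\ell\circ g=(A\to X)$, so $\ell$ is the required lift. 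I expect the main obstacle to be precisely this final assembly: one must ensure that the independently chosen level-wise lifts cohere into a single pro-morphism, and the key design choice — lifting against the relative matching map rather than against the component $F_t$ directly — is exactly what forces the compatibility. The hypotheses that $\cC$ has finite limits and that $T$ is cofinite enter only here, guaranteeing that the matching objects exist and that the induction on $d_T$ is well-founded.
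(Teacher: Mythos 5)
Your proof is correct and follows essentially the same route as the paper's: both reduce to a genuine special $M$-map indexed by a cofinite directed poset, induct on the degree function, assemble the previously constructed lifts into a map to the pullback $Y_t \times_{\lim_{s<t} Y_s} \lim_{s<t} X_s$, and then lift $g$ against the relative matching map, which forces the levelwise lifts to cohere into a pro-morphism. Your closing observation that lifting against the relative matching map (rather than against $F_t$ directly) is exactly what guarantees compatibility is precisely the point of the paper's two-stage argument.
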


\begin{rem}
The idea of the proof of Lemma ~\ref{l:SpMo_is_Mo} appears in \cite{Isa} (see the proof of Lemma 4.11).
\end{rem}

\begin{proof}
Since $M\subseteq Sp^{\cong}(M)$, it is clear that ${}^{\perp}Sp^{\cong}(M)\subseteq {}^{\perp}M$. It remains to show that ${}^{\perp}Sp^{\cong}(M)\supseteq {}^{\perp}M$.
Let $g \in {}^{\perp}M$ and $f \in Sp^{\cong}(M)$. We need to show that $g \perp f$. Without loss of generality we may assume that $f$ comes from a natural transformation $X\to Y$ with the following properties:
    \begin{enumerate}
    \item The indexing category is a cofinite directed set: $T$.
    \item The natural map $X_t \to Y_t \times_{\lim \limits_{s<t} Y_s} \lim \limits_{s<t} X_s $ is in $M$ for every $t\in T$.
    \end{enumerate}
We need to construct a lift in the following diagram:
$$
\xymatrix{
A \ar[d]^g \ar[r] & \{X_t\} \ar[d]^f \\
B          \ar[r] & \{Y_t\}.
}$$
Giving a morphism $B\to \{X_t\}$ means giving morphisms $B\to X_t$ for every $t\in T$, compatible relative to morphisms in $T$, where $X_t$ is regarded as a simple object in $\Pro(\mcal{C})$. Thus, it is enough to construct compatible lifts $B\to X_t$, in the diagrams:
$$
\xymatrix{
A \ar[d]^g \ar[r] & X_t \ar[d]^{f_t} \\
B          \ar[r] & Y_t
}$$
for every $t\in T$.

We will do this by induction on $t$. If $t$ is an element of $T$ such that $d(t)= 0$ (i.t. $t$ is a minimal element of $T$), then such a lift exists since $g \in {}^{\perp}M$, and $$X_t \to  Y_t \times_{\lim \limits_{s<t}  Y_s} \lim \limits_{s<t}  X_s = Y_t$$ is in $M$. Suppose that we have constructed compatible lifts $B\to X_s$, for every $s<t$. Let us construct a compatible lift $B \to X_t$.

We will do this in two stages. First, the compatible lifts $B\to X_s$, for $s<t$, available by the induction hypothesis, gather together to form a lift:
$$
\xymatrix{
A \ar[d]^g \ar[r] & \lim \limits_{s<t} X_s \ar[d]^{f} \\
B  \ar[ru] \ar[r] & \lim \limits_{s<t} Y_s
}$$
and the diagram
$$
\xymatrix{
B\ar[d]  \ar[r]& Y_t \ar[d]\\
\lim \limits_{s<t} X_s \ar[r]   & \lim \limits_{s<t} Y_s
}$$
obviously commutes (since the morphisms $B\to Y_t$ are compatible). Thus we get a lift
$$
\xymatrix{
A\ar[d]^g  \ar[r]&        Y_t \times_{\lim \limits_{s<t} Y_s} \lim \limits_{s<t} X_s \ar[d]\\
B \ar[r] \ar[ru]    & Y_t.
}$$
%A \ar@{=}[d] \ar[r] & X_t \ar[d] \ar@/^2pc/[dd]^f \\
The second stage is to choose any lift in the square:
$$
\xymatrix{
A\ar[d]^g\ar[r]  \ar[r] & X_t \ar[d]  \\
B \ar[r] & Y_t \times_{\lim \limits_{s<t} Y_s} \lim \limits_{s<t} X_s
}$$
which exists since $g\in {}^\perp M$, and $X_t \to Y_t \times_{\lim \limits_{s<t} Y_s} \lim \limits_{s<t} X_s$ is in $M$.
In particular we get that the following diagram commutes:
$$
\xymatrix{
B \ar[dr] \ar[r]& X_t \ar[d]\\
 & \lim \limits_{s<t} X_s,
}$$
which shows that the lift $B\to X_t$ is compatible.
\end{proof}

\subsection{Reedy type factorizations}\label{ss_reedy}

We now assume that  $M\circ N=Mor(\cC)$. Let $A$ be a cofinite poset and let $f:C\to D$ be a morphism in $\cC^A$. The purpose of this subsection is to describe a construction that produces a factorization of $f$ in $\cC^A$ of the form: $C\xrightarrow{g} H \xrightarrow{h} D$ s.t. $h$ is in $Sp(M)$ and $g$ is in $Lw(N)$ (see Definition ~\ref{def natural}). We will call it the \emph{Reedy construction}. In particular it will follow that  $Sp(M)\circ Lw(N)=Mor(\cC^A)$.

In constructing this factorization we will use the following:
\begin{lem}\label{l:extend_factorization}
Let $R$ be a finite poset, and let $f:X\to Y$ be a map in $\mcal{C}^{R^{\lhd}}$. Let $X|_R \xrightarrow{g} H \xrightarrow{h} Y|_R$ be a factorization of $f|_R$, such that $g$ is levelwise $N$ and $h$ is special $M$.
Then all the factorizations of   $f$  of the form $X \xrightarrow{g'} H' \xrightarrow{h'} Y$, such that $g'$ is levelwise $N$, $h'$ is special $M$ and $H'|_R=H,g'|_R = g,h'|_R = h$, are in natural 1-1 correspondence with all factorizations of the map   $X(\infty) \to \lim\limits_R H \times_{\lim\limits_R Y} Y(\infty)$ of the form $X(\infty) \xrightarrow{g''} H'(\infty) \xrightarrow{h''} \lim\limits_R H \times_{\lim\limits_R Y} Y(\infty)$, s.t. $g''\in N$ and $h'' \in M$ (in particular there always exists one, since $M\circ N=Mor(\cC)$).
\end{lem}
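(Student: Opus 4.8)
The plan is to exploit that $R^{\lhd}$ is obtained from $R$ by adjoining a single new maximal element $\infty$, so that extending the given factorization of $f|_R$ to a factorization of $f$ amounts to supplying exactly the data sitting over $\infty$. Concretely, an extension $X \xrightarrow{g'} H' \xrightarrow{h'} Y$ with $H'|_R = H$, $g'|_R = g$, $h'|_R = h$ is determined by an object $H'(\infty)$, the structure maps $H'(\infty) \to H(r)$ for $r \in R$ (which make $H'$ a diagram on $R^{\lhd}$), and the two components $g'(\infty)\colon X(\infty) \to H'(\infty)$ and $h'(\infty)\colon H'(\infty) \to Y(\infty)$. First I would record precisely the constraints this data must satisfy: naturality of $g'$ and of $h'$ at each morphism $\infty \to r$, and the equality $h'(\infty) \circ g'(\infty) = f(\infty)$.

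Next I would repackage this data by universal properties. The maps $H'(\infty) \to H(r)$ assemble into a single map $H'(\infty) \to \lim_R H$, and naturality of $h'$ at the morphisms $\infty \to r$ says exactly that this map and $h'(\infty)$ agree after projection to $\lim_R Y$; hence they determine a single map $h''\colon H'(\infty) \to P$, where $P := \lim_R H \times_{\lim_R Y} Y(\infty)$. Setting $g'' := g'(\infty)$, naturality of $g'$ together with $h' \circ g' = f$ shows that $h'' \circ g''$ equals the canonical map $c\colon X(\infty) \to P$ whose two components are $X(\infty) \to \lim_R X \xrightarrow{\lim g} \lim_R H$ and $f(\infty)\colon X(\infty) \to Y(\infty)$ (these are compatible over $\lim_R Y$ precisely by naturality of $f$ on $R^{\lhd}$). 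Thus every extension yields a factorization $X(\infty) \xrightarrow{g''} H'(\infty) \xrightarrow{h''} P$ of $c$. Conversely, given such a factorization I would reconstruct the structure maps $H'(\infty) \to \lim_R H \to H(r)$ and $h'(\infty)\colon H'(\infty) \to Y(\infty)$ as the two projections of $P$ post-composed with $h''$, and set $g'(\infty) = g''$; a direct diagram chase then shows the two assignments are mutually inverse, and the correspondence is manifestly natural in $f$.

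Finally I would match the defining conditions on the two sides. Since $g|_R$ is already levelwise $N$, the map $g'$ is levelwise $N$ iff its one remaining component $g'(\infty) = g''$ lies in $N$; and since $h|_R$ is already special $M$, the map $h'$ is special $M$ iff the single remaining special-$M$ condition at $\infty$ holds, namely that $H'(\infty) \to Y(\infty) \times_{\lim_R Y} \lim_R H$ lies in $M$. Because the fibre product is symmetric, this target is exactly $P$, so the condition is precisely $h'' \in M$. This establishes that extensions with $g'$ levelwise $N$ and $h'$ special $M$ correspond bijectively to factorizations of $c$ into an $N$-map followed by an $M$-map, and the existence of at least one follows by applying $M \circ N = Mor(\mcal{C})$ to $c$. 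The main obstacle I anticipate is the bookkeeping: confirming that no compatibility is lost or double-counted when passing to $P$, in particular that the special-$M$ condition at $\infty$ coincides \emph{on the nose} with $h'' \in M$, and that the reconstructed data genuinely defines a diagram on $R^{\lhd}$, i.e.\ that the recovered structure maps are compatible with those of $H$.
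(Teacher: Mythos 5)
Your proposal is correct and takes essentially the same approach as the paper: the paper's proof likewise reduces an extension to the data at $\infty$ --- an object $H'(\infty)$, a map $H'(\infty)\to\lim_R H$, and a factorization of $f(\infty)$ --- and observes that naturality, the levelwise-$N$ condition, and the special-$M$ condition need only be checked at $\infty$, which is exactly the factorization of $X(\infty)\to\lim_R H\times_{\lim_R Y}Y(\infty)$ into a map in $N$ followed by a map in $M$. Your explicit packaging via the fibre product $P$ and the mutually inverse constructions merely fill in the details the paper dismisses with ``from this the lemma follows easily.''
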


\begin{proof}
To define a factorizations of   $f$  of the form $X \xrightarrow{g'} H' \xrightarrow{h'} Y$ as above, we need to define:
\begin{enumerate}
\item An object: $H'(\infty)\in \cC$.
\item Compatible morphisms: $H'(\infty)\to H(r)$, for every $r\in R$ (or in other words, a morphism: $H'(\infty)\to\lim\limits_R H$).
\item A factorization $X(\infty) \xrightarrow{g'_{\infty}} H'(\infty) \xrightarrow{h'_{\infty}} Y(\infty)$ of $f_{\infty}:X(\infty)\to Y(\infty)$, s.t:
    \begin{enumerate}
    \item The resulting $g':X\to H',h':H'\to Y$ are natural transformations (we only need to check that the following diagram commutes:
        $$\xymatrix{
        X(\infty) \ar[d] \ar[r] &  H'(\infty) \ar[d] \ar[r] & Y(\infty) \ar[d] \\
        \lim\limits_R X  \ar[r] & \lim\limits_R H \ar[r] & \lim\limits_R Y).
        }$$
    \item $g':X\to H'$ is levelwise $N$ (we only need to check that $g'_{\infty}\in N$).
    \item $h':H'\to Y$ is special $M$ (we only need to check the special condition on $\infty\in{R^{\lhd}}$).
    \end{enumerate}
\end{enumerate}
From this the lemma follows easily.
\end{proof}

We define the factorization of $f$ recursively.

Let $n\geq 0$. Suppose we have defined a factorization  of $f|_{A^{n-1}}$ in $\cC^{A^{n-1}}$ of the form: $C|_{A^{n-1}}\xrightarrow{g|_{A^{n-1}}} H|_{A^{n-1}} \xrightarrow{h|_{A^{n-1}}} D|_{A^{n-1}}$, where $h|_{A^{n-1}}$ is in $Sp(M)$ and $g|_{A^{n-1}}$ is in $Lw(N)$ (see Definition ~\ref{def_deg}).

Let $c\in A^n\setminus A^{n-1}$.

$A^{n-1}_c$ is a finite poset, and $f|_{A_c}:C|_{A_c}\to D|_{A_c}$ is a map in $\mcal{C}^{A_c}$ (see Definition ~\ref{def CDS}). $C|_{A^{n-1}_c}\xrightarrow{g|_{A^{n-1}_c}} H|_{A^{n-1}_c} \xrightarrow{h|_{A^{n-1}_c}} D|_{A^{n-1}_c}$ is a factorization of $f|_{A^{n-1}_c}$, such that $g|_{A^{n-1}_c}$ is levelwise $N$ and $h|_{A^{n-1}_c}$ is special $M$.

Note that $A_c$ is naturally isomorphic to $(A^{n-1}_c)^{\lhd}$.
Thus, by Lemma ~\ref{l:extend_factorization}, every factorization of the map   $C(c) \to \lim\limits_{A^{n-1}_c} H \times_{\lim\limits_{A^{n-1}_c} D} D(c)$
into a map in $N$ followed by a map in $M$ gives rise naturally to a factorization of   $f|_{A_c}$  of the form $C|_{A_c}\xrightarrow{g|_{A_c}} H|_{A_c} \xrightarrow{h|_{A_c}} D_{A_c}$ s.t. $g|_{A_c}$ is levelwise $N$ and $h|_{A_c}$ is special $M$, extending the recursively given factorization. Choose such a factorization of  $C(c) \to \lim\limits_{A^{n-1}_c} H \times_{\lim\limits_{A^{n-1}_c} D} D(c)$, and combine all the resulting factorizations  of   $f|_{A_c}$  for different $c\in A^n\setminus A^{n-1}$ to obtain the recursive step.

\subsection{Factorizations in pro categories}\label{ss_fact}

The purpose of this subsection is to prove the rest of Proposition ~\ref{p:fact} not proven in Lemma ~\ref{l:SpMo_is_Mo}. We also explain and correct a slight error in \cite{EH} Theorem 2.1.6.

\begin{prop}\label{p_f}
If $Mor(\mcal{C}) = M\circ N$ then $Mor(Pro(\mcal{C})) = Sp^{\cong}(M)\circ Lw^{\cong}(N)$.
\end{prop}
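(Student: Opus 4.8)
The plan is to reduce the statement to the Reedy construction of Section~\ref{ss_reedy}. Fix a morphism $f\colon X\to Y$ in $\Pro(\cC)$, with $X\colon I\to\cC$ and $Y\colon J\to\cC$. Since both $Lw^{\cong}(N)$ and $Sp^{\cong}(M)$ are by definition classes of morphisms in $\Pro(\cC)$ closed under isomorphism in the arrow category $\Pro(\cC)^{\Delta^1}$, it suffices to produce the desired factorization after replacing $f$ by any morphism isomorphic to it in $\Pro(\cC)^{\Delta^1}$.

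First I would carry out a reindexing step: I claim $f$ is isomorphic in $\Pro(\cC)^{\Delta^1}$ to a morphism $\tilde f\colon \tilde X\to \tilde Y$ coming from a natural transformation of diagrams $\tilde X,\tilde Y\colon T\to\cC$ indexed by a single small cofinite directed poset $T$. To obtain this one first invokes the classical theorem that every small directed category admits a cofinal functor from a cofinite directed set, which lets us replace $I$ and $J$ by cofinite directed posets without changing the isomorphism type of $X$ and $Y$; then, exactly as in \cite{Isa} Section~4, one organizes compatible chosen representatives $X_t\to Y_s$ of $f$ into a common cofinite directed indexing poset $T$ over which $f$ is represented by an honest natural transformation. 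This is also the point at which the cofinal-functor theorem of \cite{EH} Theorem~2.1.6 is used, and where the correction discussed in Section~\ref{ss_fact} becomes relevant.

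Next, to the natural transformation $\tilde f\colon \tilde X\to\tilde Y$ in $\cC^T$ I would apply the Reedy construction of Section~\ref{ss_reedy}, which is available because $T$ is a cofinite poset and $Mor(\cC)=M\circ N$. This yields a factorization $\tilde X\xrightarrow{g}H\xrightarrow{h}\tilde Y$ in $\cC^T$ with $g\in Lw(N)$ and $h\in Sp(M)$. Viewed in $\Pro(\cC)$, the map $g$ comes from a levelwise $N$ natural transformation, so $g\in Lw^{\cong}(N)$, and $h$ comes from a special $M$-map, so $h\in Sp^{\cong}(M)$; thus $\tilde f=h\circ g$ is a factorization of $\tilde f$ of exactly the required form.

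Finally I would transport this factorization back along the isomorphism $f\cong\tilde f$. If $(a,b)$ is the arrow-category isomorphism with $b\circ f=\tilde f\circ a$, then $f=(b^{-1}\circ h)\circ(g\circ a)$; since $g\circ a$ is isomorphic to $g$ and $b^{-1}\circ h$ is isomorphic to $h$ in $\Pro(\cC)^{\Delta^1}$, closure under isomorphism gives $g\circ a\in Lw^{\cong}(N)$ and $b^{-1}\circ h\in Sp^{\cong}(M)$, completing the factorization. I expect the main obstacle to be the reindexing step: producing a single cofinite directed poset over which an arbitrary pro-morphism becomes a genuine natural transformation is the delicate, classical part of the argument, and it is precisely here that the foundational cofinal-functor theorem — together with the gap in its proof in \cite{EH} — must be handled with care.
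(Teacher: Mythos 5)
Your proposal is correct and takes essentially the same approach as the paper: reduce $f$ to a level map over a cofinite directed poset via the classical reindexing results (Propositions~\ref{every map natural} and~\ref{p_onto}), apply the Reedy construction of Section~\ref{ss_reedy}, and transport the factorization back along the arrow-category isomorphism using closure of $Lw^{\cong}(N)$ and $Sp^{\cong}(M)$ under isomorphism. The only (immaterial) difference is bookkeeping in the reindexing step: the paper first replaces $f$ by a natural transformation via \cite{AM} Appendix 3.2 and then pulls back along a cofinal functor $p:A\to I$ from a cofinite directed set, whereas you reindex the two objects first and then fold both classical statements into a single ``common cofinite poset'' step.
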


\begin{proof}
Let $f:X\to Y$ be a morphism in $Pro(\mcal{C})$. By Proposition ~\ref{every map natural} below there exist a natural transformation $f':X'\to Y'$ that is isomorphic to $f$ as a morphism in $Pro(\mcal{C})$. Let $I$ be the mutual indexing category of $X'$ and $Y'$. By Proposition ~\ref{p_onto} below there exist a cofinite directed set $A$ and a cofinal functor: $p:A\to I$. Then $p^*f:p^*X'\to p^*Y'$ is a natural transformation, between diagrams $A\to\cC$, that is isomorphic to $f'$ as a morphism in $Pro(\mcal{C})$. Applying the Reedy construction of Section ~\ref{ss_reedy} to $p^*f$, and composing with the above isomorphisms, we obtain a factorization of $f$ in $Pro(\cC)$ into a morphism in $Lw^{\cong}(N)$ followed by a morphism in $Sp^{\cong}(M)$.
\end{proof}

The proof of Proposition ~\ref{p_f} makes use of the following two classical Propositions:

\begin{prop}\label{every map natural}
Let $f$ be a morphism in $\Pro(\mcal{C})$. Then $f$ is isomorphic, in the category of morphisms in $\Pro(\mcal{C})$, to a morphism that comes from a natural transformation.
\end{prop}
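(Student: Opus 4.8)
The plan is to build a common indexing category for suitable reindexings of $X$ and $Y$ directly out of the data of $f$, by taking the category of all representatives of $f$. Write $X:I\to\cC$ and $Y:J\to\cC$, so that $f$ is an element of $\lim_{s\in J}\colim_{t\in I}\Hom_{\cC}(X_t,Y_s)$; thus for each $s\in J$ it determines a class $f_s\in\colim_{t}\Hom_{\cC}(X_t,Y_s)$, and for each morphism $s\to s'$ in $J$ the class $f_{s'}$ is the image of $f_s$ under postcomposition with $Y_s\to Y_{s'}$. I would define a category $K$ whose objects are triples $(t,s,\phi)$ with $t\in I$, $s\in J$ and $\phi:X_t\to Y_s$ a morphism in $\cC$ representing $f_s$, and whose morphisms $(t,s,\phi)\to(t',s',\phi')$ are pairs $(a:t\to t',\,b:s\to s')$ in $I\times J$ with $Y_b\circ\phi=\phi'\circ X_a$. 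Writing $\pi_I:K\to I$ and $\pi_J:K\to J$ for the two projections and setting $X'=X\circ\pi_I$, $Y'=Y\circ\pi_J$, the assignment $\eta_{(t,s,\phi)}=\phi$ is a natural transformation $X'\to Y'$ precisely by the definition of the morphisms of $K$.

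The first thing to verify is that $K$ is small and cofiltered in the sense of Definition~\ref{d_directed}. Smallness and nonemptiness are immediate (any $s\in J$ yields a representative $\phi$ of $f_s$, hence an object of $K$). For the lower-bound condition, given two objects I would first use cofilteredness of $J$ to find a single $s$ with morphisms to both second coordinates, represent $f_s$ by some $\psi$, and then exploit that the colimits $\colim_t\Hom_{\cC}(X_t,Y_{s_i})$ are filtered (because $I$ is cofiltered) together with the compatibility of the $f_s$ to move $\psi$ and the two given representatives to a common index in $I$; condition (3) of cofilteredness for $I$ is then used to equalize the two resulting morphisms into the original first coordinates. The parallel-morphisms condition is handled analogously, first coequalizing in $J$, then in $I$, and finally refining the chosen representative so that the required square commutes.

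Next I would show that both projections $\pi_I$ and $\pi_J$ are cofinal, so that $X'\cong X$ and $Y'\cong Y$ as pro-objects. Concretely this amounts to checking that the opposite functors are final, i.e. that for each object the relevant comma category is nonempty and connected: nonemptiness follows exactly as in the verifications above (any index of $I$ or $J$ can be covered by a representative of some $f_s$), and connectedness follows from the cofilteredness of $K$ just established. By the standard fact that reindexing a pro-object along a cofinal functor produces an isomorphic pro-object, this yields isomorphisms $X'\cong X$ and $Y'\cong Y$ in $\Pro(\cC)$.

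Finally I would check that these two isomorphisms assemble $\eta$ and $f$ into an isomorphism in the category of morphisms of $\Pro(\cC)$, that is, that the square with horizontal maps $\eta$ and $f$ and vertical the two isomorphisms commutes. This is essentially forced, since $\eta_{(t,s,\phi)}=\phi$ was chosen to represent the $s$-component $f_s$ of $f$; one traces through the definition of the canonical comparison maps and uses that all representatives of a given $f_s$ agree in the colimit. I expect the main obstacle to be precisely the bookkeeping in the cofilteredness of $K$ --- juggling the three pieces of data $(t,s,\phi)$ simultaneously while repeatedly invoking conditions (2) and (3) of Definition~\ref{d_directed} in both $I$ and $J$ --- together with, to a lesser degree, keeping the variance straight when applying the cofinality/reindexing theorem to the two projections.
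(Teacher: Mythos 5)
Your construction is correct, but note that the paper itself gives no argument here: its ``proof'' is the single line ``See [AM] Appendix 3.2.'' What you have written out is essentially the classical argument behind that citation --- the cofiltered category $K$ of all representatives $(t,s,\phi)$ of $f$, with the level map $\eta_{(t,s,\phi)}=\phi$ and the two cofinal projections --- so in substance you have reproduced (a sketch of) the Artin--Mazur proof rather than found a new route. The one place where your sketch is looser than it should be is the cofinality of the projections: connectedness of the comma category $(\pi_J)_{/s}$ does not follow from cofilteredness of $K$ alone. Given two objects of $(\pi_J)_{/s}$, cofilteredness of $K$ dominates both by some $k_0\in K$, but the two induced maps $\pi_J(k_0)\to s$ need not agree; you must equalize them by a morphism $e\colon s'\to\pi_J(k_0)$ in $J$ and then \emph{lift} $e$ to a morphism of $K$ into $k_0$, i.e.\ produce an object of $K$ over $s'$ mapping to $k_0$ with $J$-component $e$. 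That lift exists by the same filtered-colimit refinement trick you use to prove $K$ is cofiltered (choose a representative of $f_{s'}$, compare its pushforward along $e$ with $\phi_0$ in $\colim_t\Hom(X_t,Y_{s_0})$, and refine in $I$), so the gap is routine to fill --- but it is an extra step, not a formal consequence of cofilteredness of $K$. The same remark applies to $(\pi_I)_{/t}$, where the lift is in fact easier (precompose $\phi_0$ with a structure map of $X$). Everything else --- cofilteredness of $K$, smallness (which tacitly uses that $\cC$ has small hom-sets, as the definition of $\Pro(\cC)$ already requires), and the commutativity of the final comparison square --- goes through as you describe.
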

\begin{proof}
See \cite{AM} Appendix 3.2.
\end{proof}

\begin{prop}\label{p_onto}
Let $I$ be a small directed category. Then there exist a (small) cofinite directed set $A$ of infinite hight and a cofinal functor: $p:A\to I$.
\end{prop}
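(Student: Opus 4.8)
The goal is a cofinite directed poset $A$ of infinite height together with a functor $p:A\to I$ that is cofinal, by which I mean the two conditions that make reindexing along $p$ induce a pro-isomorphism (this is all that Proposition ~\ref{p_f} uses): (i) for every $i\in I$ there are $a\in A$ and a morphism $p(a)\to i$ in $I$; and (ii) for every $i\in I$, every $a_1,a_2\in A$, and every pair of morphisms $p(a_1)\to i$, $p(a_2)\to i$, there is $a\in A$ with $a\ge a_1$ and $a\ge a_2$ for which the two composites $p(a)\to p(a_j)\to i$ agree. My plan is to build $A$ and verify (i) and (ii) directly from the cofiltering axioms of Definition ~\ref{d_directed}.

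The plan is to take as elements of $A$ the pairs $(D,v)$, where $D$ is a finite sub-diagram of $I$ (a finite set of objects together with a finite set of morphisms among them, \emph{not} assumed closed under composition), $v$ is an object of $D$, and $D$ contains a chosen compatible family of morphisms $v\to s$, one for each object $s$ of $D$ (so $v$ is a cone vertex for $D$). I order these by refinement: $(D',v')\ge(D,v)$ iff $D'\supseteq D$ and $D'$ contains a morphism $v'\to v$ carrying the cone of $v'$ to that of $v$. The projection is $p(D,v):=v$; it is a functor because a relation $(D',v')\ge(D,v)$ supplies a morphism $v'\to v$, i.e. $p(D',v')\to p(D,v)$. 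The point of not closing $D$ under composition is to keep merges finite.

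Granting for a moment that this is a cofinite directed poset, the remaining properties come out cleanly. Cofiniteness: the elements $\le(D,v)$ correspond to sub-diagrams $D'\subseteq D$ with a chosen vertex, of which there are only finitely many, so down-sets are finite and the degree function of Definition ~\ref{def_deg} is available. Directedness: given $(D_1,v_1)$ and $(D_2,v_2)$, use cofiltering axiom (2) to get $u$ mapping to $v_1$ and $v_2$, then axiom (3) to replace $u$ by a $w$ that coequalizes the finitely many pairs of parallel morphisms $u\to s$ arising from objects $s$ lying in both diagrams; setting $v_3:=w$ and adjoining the new arrows gives a common refinement that is still finite. Condition (i) holds via the one-object element $(\{i\},i)$, and condition (ii) is the same merge-and-coequalize argument applied to the two composites into $i$. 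Infinite height is automatic, since any $(D,v)$ admits a strict refinement, and the degenerate cases are handled by padding; it is not the crux.

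The hard part — and, I expect, exactly where the argument of \cite{EH} slips — is reconciling antisymmetry with the projection. Because morphisms in $I$ need not be unique, refinement is only a \emph{preorder}: one can have $(D,v)\ge(D',v')\ge(D,v)$ with $v,v'$ merely connected by a pair of mutually inverse-up-to-nothing arrows. Passing to the quotient poset does keep down-sets finite (each preorder down-set was already finite, hence meets finitely many classes), but it destroys $p$, since a class need not determine a single object of $I$. So one cannot simply quotient; instead the ties must be broken by a rigidifying device — for instance a fixed well-ordering of the objects and morphisms of $I$, or an integer rank built into the data — chosen compatibly with refinement, after which one must recheck that the result is a genuine cofinite directed poset and that $p$ is still well defined and cofinal. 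Getting this tie-breaking to coexist with the finiteness of down-sets and with the coequalizing step in the merge (which must be performed for finitely many parallel pairs simultaneously) is the technical heart of the proof; once it is in place, cofinality is routine.
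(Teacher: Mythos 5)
Your construction stalls exactly at the point you yourself call the ``technical heart,'' and that gap is fatal rather than cosmetic. As you note, refinement of pairs $(D,v)$ is only a \emph{preorder}: in a directed category containing two isomorphic objects (or any two cone vertices dominating each other compatibly) one gets $(D,v)\ge(D,v')\ge(D,v)$ with $v\neq v'$, so your $A$ is not a poset; quotienting destroys the projection $p$; and the ``rigidifying device'' you invoke (a well-ordering of $I$, or ranks built into the data) is never constructed. This cannot be deferred as routine, because any tie-breaking rule must simultaneously (a) leave enough elements for the merge-and-coequalize argument to still witness directedness --- the merged element $(D_1\cup D_2\cup\{\text{new arrows}\},w)$ must itself survive the selection, (b) keep down-sets finite, and (c) keep every surviving order relation supplying an actual morphism $v'\to v$ in $I$ so that $p$ remains a functor. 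Arranging (a)--(c) for an order on diagram-like data is precisely the content of the proposition, and precisely the tension on which, as the paper explains, the construction of \cite{EH} Theorem 2.1.6 founders: there one must choose between sub-diagram inclusion failing directedness and an isomorphism-tolerant order failing to be a poset. Your version trades their disjointness problem (cleverly, by using sub-diagrams of $I$ itself) for the antisymmetry problem, but the obstruction is the same and remains unresolved. (A smaller unresolved point: for a finite $I$, e.g.\ a single object with its identity, your $A$ is finite, so infinite height genuinely requires the unspecified ``padding.'')

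The paper closes this gap by changing what an element of $A$ remembers: it builds $A=\bigcup_n A^n$ recursively, an element of $A^{n+1}\setminus A^n$ being a pair $(R,q)$ where $R$ is a finite Reysha of $A^n$ (Definition ~\ref{d:Reysha}) and $q:R^{\lhd}\to I$ is a functor extending the functor already defined on $A^n$; one declares $c<(R,q)$ iff $c\in R$, and sets $p(R,q):=q(\infty)$. Because comparabilities only run from strictly lower levels to higher ones (transitivity holds since $R$ is downward closed), antisymmetry is automatic --- no quotient, no tie-breaking --- while the functor data carried by each element keeps $p$ well defined, and directedness of $I$ provides exactly the needed extensions $R^{\lhd}\to I$ (Lemma ~\ref{l:eqiv_directed}); cofinality is then checked via nonemptiness and connectedness of the over categories. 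In short, the repair your sketch needs is to index a new element not by a finite sub-diagram of $I$ with a chosen vertex, but by a finite Reysha of the poset already built together with the full cone functor on it; that one change dissolves the antisymmetry problem your approach runs into.
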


Proposition ~\ref{p_onto} is a well known result in the theory of pro categories. In \cite{Isa}, Isaksen gives two references to this proposition. one is \cite{EH} Theorem 2.1.6 and the other is \cite{SGA4-I} Proposition 8.1.6.

We would like to take this opportunity to explain and correct a slight error in the construction of \cite{EH}. We briefly recall the construction of \cite{EH} Theorem 2.1.6.

Let $\cD$ be any category. Call an object $d\in \cD$ strongly initial, if it is an initial object, and there are no maps into $d$ except the identity. Define:
$$M(I):=\{\cD\to  I|\cD\: is\: finite,\: and\: has\: a\: strongly\: initial\: object\}.$$

We order the set $M(I)$ by sub-diagram inclusion. $M(I)$ is clearly cofinite.

Then \cite{EH} claim that because $I$ is directed, $M(I)$ is also directed. Apparently the idea is that given two diagrams: $F_1:\cD_1\to I,F_2:\cD_2\to I$, we can take the disjoint union of $\cD_1,\cD_2$, and add an initial object: $(\cD_1\coprod\cD_2)^{\lhd}$. In order to define a diagram $(\cD_1\coprod\cD_2)^{\lhd}\to I$ extending $F_1,F_2$, it is thus enough to find an object $F(\infty)\in I$, and morphisms in $I$: $F(\infty)\to F_1(\infty_1),F(\infty)\to F_2(\infty_2)$. Since $I$ is directed this can be done. Notice however, that we have only used the fact that $I$ satisfies one of the axioms of a directed category, namely, that for every pair of objects there is an object that dominates both. If this construction was correct it would mean that for every category $I$ satisfying only the first axiom of a directed category, there exist a directed poset $\cP$ and a cofinal functor $\cP\to I$. This would imply that $I$ is a directed category, by the lemma below.
But there are examples of categories satisfying only the first axiom of a directed category, that are not directed, e.g.
the category $\bullet \rightrightarrows \bullet$ or the category of hyper covers on a Grothendieck site (see ~\cite{AM}).

The reason why this construction is wrong is that $\cD_1,\cD_2$ may not be disjoint (they may have an object in common), and thus one cannot always consider their disjoint union: $\cD_1\coprod\cD_2$. This may sound like a purely technical problem, since we can "force" $\cD_1,\cD_2$ to be disjoint, for example by
considering $(\cD_1\times\{0\})\coprod(\cD_2\times\{1\})$. But then $F_1,F_2$ will not be sub diagrams of $F$, rather there would exist isomorphisms from them to sub diagrams of $F$. In other words, $M(I)$ will not be a poset.

\begin{lem}
Let $A$ be a directed category, $D$ any category and $F:A\to D$ a cofinal functor. Then $D$ is directed.
\end{lem}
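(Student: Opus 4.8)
The plan is to verify directly the three axioms of Definition~\ref{d_directed} for $D$, using the two properties that characterize a cofinal functor $F:A\to D$ in the pro-categorical setting used around Proposition~\ref{p_onto}: (i) every object $d\in D$ admits a morphism $F(a)\to d$ for some $a\in A$; and (ii) for every object $d\in D$ and every pair of morphisms $u,v:F(a)\to d$ in $D$ with common source in the image of $F$, there is a morphism $\theta:a'\to a$ in $A$ with $u\circ F(\theta)=v\circ F(\theta)$. Property (ii) is the incarnation, for a cofiltered source $A$, of the connectedness of the comma categories $F\downarrow d$; whichever standard formulation of cofinality the reader adopts, it supplies exactly (i) and (ii).

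Axioms 1 and 2 are the easy part. For Axiom 1, note that a directed $A$ is non-empty, so any $a_0\in A$ yields an object $F(a_0)\in D$. For Axiom 2, given $x,y\in D$, property (i) furnishes $a,b\in A$ together with morphisms $\phi:F(a)\to x$ and $\psi:F(b)\to y$; since $A$ is directed there is $c\in A$ with morphisms $c\to a$ and $c\to b$ in $A$, and applying $F$ and composing gives $F(c)\to F(a)\xrightarrow{\phi}x$ and $F(c)\to F(b)\xrightarrow{\psi}y$. Thus $z:=F(c)$ dominates both $x$ and $y$. Both arguments use only property (i) together with the corresponding axioms for $A$.

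The crux is Axiom 3, and this is where property (ii) is indispensable: property (i) alone does not suffice, as one sees already when $A$ is the one-object category, where $F$ can be cofinal in the sense of (i) while $D$ contains inequalizable parallel arrows. Given a parallel pair $f,g:x\to y$ in $D$, I would use (i) to pick $a\in A$ and $\phi:F(a)\to x$, then apply (ii) to the two morphisms $f\circ\phi,\ g\circ\phi:F(a)\to y$ (with $d=y$). This produces $\theta:a'\to a$ in $A$ with $(f\circ\phi)\circ F(\theta)=(g\circ\phi)\circ F(\theta)$. Setting $z:=F(a')$ and $h:=\phi\circ F(\theta):z\to x$ then gives $f\circ h=g\circ h$, which is precisely what Axiom 3 demands.

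I expect the only delicate point to be pinning down the correct reading of the cofinality hypothesis: one must extract from it the \emph{eventual equalization} statement (ii), rather than merely the existence statement (i). Once (ii) is available, the verification of all three axioms is the short chase above, and the resulting lemma is exactly what is needed to conclude that the erroneous \cite{EH} construction — which establishes only the analogue of (i) using just the first directedness axiom of $I$ — cannot be repaired, since it would force every category satisfying that first axiom to be directed.
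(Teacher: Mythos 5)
Your verification of the three directedness axioms from properties (i) and (ii) is correct, and once that data is in hand it matches what the paper does. The genuine gap is the sentence ``whichever standard formulation of cofinality the reader adopts, it supplies exactly (i) and (ii).'' In this paper cofinality means (as cited from \cite{Hir}, Section 14.2, and as verified when cofinality is actually proved, e.g.\ in Lemma~\ref{l:A_onto}) that every over category $F_{/d}$ is nonempty and \emph{connected}. Nonemptiness is your (i), but connectedness does not hand you (ii): given $u,v\colon F(a)\to d$, connectedness of $F_{/d}$ only provides a finite zig-zag of objects $F(p_1)\to d,\dots,F(p_n)\to d$ and morphisms of $F_{/d}$ joining $u$ to $v$; it says nothing directly about equalizing $u$ and $v$ by a single morphism $F(a')\to F(a)$ in the image of $A$. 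Converting that zig-zag into such an equalizer is precisely the nontrivial content of the lemma, and your proposal omits it --- you flag it as ``the only delicate point'' and then assume it.

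That conversion is where the paper spends its effort: it first reduces to the case where $A$ is a directed \emph{poset} via \cite{SGA4-I}, Proposition 8.1.6; then, given parallel $f,g\colon c\to d$ and $h\colon F(p)\to c$, it takes the zig-zag $p\le p_1\ge p_2\le\cdots\ge p$ in $F_{/d}$ joining $gh$ to $fh$, dominates $p,p_1,\dots,p_n$ by a single $q\in A$ using directedness, and uses the poset hypothesis to conclude that the two induced maps $l_1,l_2\colon F(q)\to F(p)$ coming from the two ends of the zig-zag are equal, which yields $fhl=ghl$. In a general directed category this last point is not automatic: one must first choose a cone over the zig-zag (possible by Lemma~\ref{l:eqiv_directed}, a zig-zag being a finite poset) and then perform one further equalization, via the third directedness axiom for $A$, of the two resulting maps $q\to p$. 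Either way an argument is needed, so your (ii) is a \emph{consequence} of cofinality together with directedness of $A$, not part of the hypothesis; if instead you take (i)+(ii) as the definition of cofinal, the lemma becomes easy but no longer applies to the comma-category notion under which the paper, and the flawed construction of \cite{EH} it is analyzing, actually operate.
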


\begin{proof}
By \cite{SGA4-I} Proposition 8.1.6, we may assume that $A$ is a directed poset. By \cite{Hir} section 14.2, for every $c\in D$, the over category $F_{/c}$ is nonempty and connected.

Let $c,d\in D$. $F_{/c},F_{/d}$ are non empty, so there exist $q,p\in A$, and morphisms in $D$ of the form:
$$F(q)\to d,F(p)\to c.$$
$A$ is directed, so there exist $r\in A$ s.t. $r\geq p,q$. Then $F(r)\in D$, and we have morphisms in $D$ of the form:
$$F(r)\to F(q)\to d,F(r)\to F(p)\to c.$$

Let $f,g:c\to d$ be two parallel morphisms in $D$. $F_{/c}$ is nonempty, so there exist $p\in A$, and a morphism in $D$ of the form: $h:F(p)\to c$. Then $gh,fh\in F_{/d}$, and $F_{/d}$ is connected, so there exist elements in $A$ of the form:
$$p\leq p_1\geq p_2\leq ... p_n\geq p,$$
that connect $gh,fh:F(p)\to d$ in the over category $F_{/d}$. $A$ is directed, so there exist $q\in A$, s.t. $q\geq p,p_1,...,p_n$. It follows that we have a commutative diagram in $D$ of the form:
$$\xymatrix{
F(p) \ar[dr]_{gh}  & F(q) \ar[l]_{l_1} \ar[r]^{l_2} \ar[d] & F(p)  \ar[dl]^{fh}\\
\empty &  d & \empty .}$$
But, $l_1=l_2=l$, since $A$ is a poset. Define: $t:=hl:F(q)\to c$. then:
$$ft=fhl=ghl=gt.$$
\end{proof}

In order to prove Proposition ~\ref{p_onto} we can still use the construction of \cite{SGA4-I} Proposition 8.1.6. However, we would like to offer an alternative construction, more in the spirit of the construction of \cite{EH}. The main idea is to replace the use of diagrams by an inductive procedure.

\emph{Proof of Proposition ~\ref{p_onto}:} We shall define $A$ and $p:A\to I$ recursively.

We start with defining $A^{-1}:=\phi$, and $p^{-1}:A^{-1}=\phi \to I$ in the only possible way.

Now, suppose we have defined an $n$-level cofinite poset $A^n$, and a functor $p^n:A^n \to I$.

We define $B^{n+1}$ to be the set of all tuples  $(R,p:R^{\lhd}\to I)$ such that
$R$ is a finite Reysha in $A^n$ (see Definition ~\ref{d:Reysha}), $p:R^{\lhd}\to I$ is a functor such that $p|_R = p^n|_R$.

As a set, we define: $A^{n+1} := A^n \coprod B^{n+1}$. For $c\in A^{n}$, we set $c < (R,p:R^{\lhd}\to I)$ iff $c \in R$. Thus we have defined an $(n+1)$-level cofinite poset: $A^{n+1}$.
We now define $p^{n+1}:A^{n+1} \to I$ by $p^{n+1}|_{A^n} = p^{n} $ and $p^{n+1}(R,p:R^{\lhd}\to T) = p(\infty)$, where $\infty \in R^{\lhd}$ is the initial object.
Now we define $A = \cup A^n$.

It is clear that by taking the limit on all the $p^n$ we obtain a functor $p:A \to I$.

Note that $A^0=Ob(I)$ and $p^0:A^{0}=Ob(I) \to I$ is the identity on $Ob(I)$.

\begin{lem}\label{l:A_is_directed}
$A$ is directed.
\end{lem}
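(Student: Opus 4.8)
The plan is to prove directedness of $A$ through the combinatorial characterization recorded in Lemma~\ref{l:eqiv_directed}. Since the recursion is explicitly arranged so that each element $(R,p)\in B^{n+1}$ lies above exactly the elements of the finite Reysha $R$, and $R$ is down-closed, the full down-set $A_{(R,p)}=R\cup\{(R,p)\}$ is finite; hence $A=\bigcup_n A^n$ is a cofinite poset. By Lemma~\ref{l:eqiv_directed} it therefore suffices to show that \emph{every finite Reysha} $R\subset A$ admits an element $c\in A$ with $c\ge r$ for all $r\in R$.

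So I would fix a finite Reysha $R\subset A$ and first push it down into a finite stage of the filtration. Setting $n=\max_{r\in R}d(r)$ (see Definition~\ref{def_deg}), every $r\in R$ has degree $\le n$, so $R\subseteq A^n$; and since $A^n$ is itself a Reysha of $A$ while $R$ is down-closed in $A$, the set $R$ is a finite Reysha of $A^n$ (Definition~\ref{d:Reysha}). This places us precisely in the configuration the recursive step was designed around.

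The key step is then to manufacture the upper bound as a brand-new element of the next stage, and this is exactly where directedness of the indexing category $I$ enters. The restriction $p^n|_R\colon R\to I$ is a functor out of the finite poset $R$, so by Lemma~\ref{l:eqiv_directed} (the directedness of $I$) it extends to a functor $q\colon R^{\lhd}\to I$ with $q|_R=p^n|_R$. By the definition of $B^{n+1}$, the pair $(R,q)$ is a legitimate element of $B^{n+1}\subseteq A^{n+1}\subseteq A$. Taking $c:=(R,q)$, the definition of the order on $A^{n+1}$ gives $r<c$ for every $r\in R$ (each $r\in R\subseteq A^n$ satisfies the defining condition $r\in R$), so $c$ is the required upper bound. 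Feeding this back through Lemma~\ref{l:eqiv_directed} yields that $A$ is directed.

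The only genuine content — and the place I would be most careful — is the single appeal to directedness of $I$ used to extend $p^n|_R$ from $R$ to $R^{\lhd}$; everything else is bookkeeping about the filtration and about down-closedness of Reyshas. It is worth stressing that this extension uses the \emph{full} strength of directedness of $I$ (both the common-upper-bound axiom and the coequalizing axiom, packaged together in Lemma~\ref{l:eqiv_directed}), since the compatible maps $q(\infty)\to p^n(r)$ must make all triangles over $R$ commute. This is precisely the subtlety the preceding discussion of the Edwards--Hastings argument isolated: the naive disjoint-union construction invoked only the first directedness axiom, whereas the corrected inductive construction here is forced to use all of it, which is exactly why it succeeds.
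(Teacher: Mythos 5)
Your proof is correct and is essentially the paper's own argument: reduce via Lemma~\ref{l:eqiv_directed} to bounding a finite Reysha $R$, note $R\subseteq A^n$ for some $n$, and produce the bound as an element $(R,q)\in B^{n+1}$ by extending $p^n|_R$ to $R^{\lhd}\to I$ using the directedness of $I$. The extra observations (cofiniteness of $A$, and that the extension step uses the full strength of directedness of $I$) are sound and consistent with the paper's surrounding discussion.
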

\begin{proof}
To prove that $A$ is directed we need to  show that for every finite reysha $R \subset A$,
there exist an element $c \in A$, such that $c\geq r$ for every $r\in R$ (see Lemma ~\ref{l:eqiv_directed}).
Indeed let $R \subset A$ be a finite reysha. Since $R$ is finite, there exist some $n \in \NN$ such that
$R \subset A^{n}$. We can take $c$ to be any element in $B^{n+1}$ of the form $(R,p:R^{\lhd}\to T)$. To show that such an element exists, note that since $I$ is directed we can extend the functor $p^n|_R:R\to I$ to a functor $p:R^{\lhd}\to I$ (see Lemma ~\ref{l:eqiv_directed}).
\end{proof}

\begin{lem}\label{l:A_onto}
The functor: $q:A\to I$ is cofinal.
\end{lem}

\begin{proof}
By \cite{Hir} section 14.2 we need to show that for every $i\in I$, the over category $q_{/i}$ is nonempty and connected. Let $i\in I$.

As noted above, $A^0=Ob(I)$ and $p|_{A^{0}}:Ob(I) \to I$ is the identity on $Ob(I)$. Thus $(i,id_i)$ is an object in $q_{/i}$.

Let $f_1:q(a_1)\to i,f_2:q(a_2)\to i$ be two objects in $q_{/i}$. Since $A$ is directed, there exist $c\in A$ s.t. $c\geq a_1, a_2$. Applying $q$ and composing with $f_1,f_2$ we get two parallel morphisms in $I$: $q(c)\to i$. Since $I$ is directed, there exist a morphism: $h:i'\to q(c)$ in $I$ that equalizes these two parallel morphisms.

We now wish to show that there exist $c'\in A$ s.t. $c'\geq c$ and s.t. $q(c')  =i'$ and the induced map: $q(c') \to q(c)$ is exactly $h$.

There exist a unique $n\geq 0$, s.t. $c\in A^n\setminus A^{n-1}=B^n$.
We can write $c$ as $c = (R,p:R^{\lhd} \to I)$, where $R$ is a finite reysha in $A^{n-1}$.

Note that $R_c:=\{a\in A^n|c\geq a\}\subseteq A^n $ is naturally isomorphic to $R^{\lhd}$.

Define: $c' := (R_c,p':R_c^{\lhd}\to I) \in B^{n+1}$, where:
$$p'|_{R_c} = p'|_{R^{\lhd}} = p|_{R^{\lhd}},p'(\infty') = i'.$$
The map $p'(\infty') = i'\to q(c)=p(\infty)=p'(\infty)$, is defined to be $h$ (where $\infty\in R^{\lhd}$, $\infty'\in R_c^{\lhd}$ are the initial objects).

To show that $c' \in B^{n+1}$, it remains to check that $p'|_{R_c}=p^n|_{R_c}$. But this follows from the fact that $p|_{R}=p^{n-1}|_{R}$, and the (recursive) definition of $p_n$.

Now it is clear that: $c'>c$, $q(c') = i'$ and the induced map: $q(c') \to q(c)$ is exactly $h$.

It follows that we have morphisms in $q_{/i}$:
$$\xymatrix{
q(a_1) \ar[dr]_{f_1}  & q(c') \ar[l] \ar[r] \ar[d] & q(a_2).  \ar[dl]^{f_2}\\
\empty &  i & \empty }$$
\end{proof}

$A$ is clearly of infinite hight, so we have concluded the proof.$\square$

We now continue with the main theme of this section. Our aim is to prove that if $(N,M)$ is a weak factorization system in $\cC$, then $(Lw^{\cong}(N),R(Sp^{\cong}(M)))$ is a weak factorization system in $Pro(\cC)$. For this we will need the following:

\begin{lem}\label{l_lift}
Assume $Mor(\cC)=M\circ N$. Then:
\begin{enumerate}
\item  $N^{\perp}\subseteq R(M)$.
\item  $^{\perp}M\subseteq R(N)$.
\end{enumerate}
\end{lem}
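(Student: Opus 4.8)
The plan is to use the classical retract argument for weak factorization systems. The two parts are formally dual, so I would prove (1) in full and obtain (2) by reversing all arrows.

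For part (1), let $f:X\to Y$ be a morphism in $N^{\perp}$. Since $Mor(\cC)=M\circ N$, I can factor $f$ as $X\xrightarrow{q}L\xrightarrow{p}Y$ with $q\in N$ and $p\in M$. The key step is to produce a lift in the square
$$
\xymatrix{
X \ar[r]^{\mathrm{id}_X} \ar[d]_{q} & X \ar[d]^{f} \\
L \ar[r]_{p} & Y,
}
$$
which commutes because $p\circ q=f$. As $q\in N$ and $f\in N^{\perp}$ we have $q\perp f$, so a lift $s:L\to X$ exists, satisfying $s\circ q=\mathrm{id}_X$ and $f\circ s=p$. These two identities are exactly what is needed to exhibit $f$ as a retract of $p$ via the diagram
$$
\xymatrix{
X \ar[r]^{q} \ar[d]_{f} & L \ar[r]^{s} \ar[d]^{p} & X \ar[d]^{f} \\
Y \ar[r]_{\mathrm{id}_Y} & Y \ar[r]_{\mathrm{id}_Y} & Y,
}
$$
whose squares commute and whose horizontal composites are identities (the top composite being $s\circ q=\mathrm{id}_X$). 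Hence $f\in R(M)$, proving (1).

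For part (2), let $g\in {}^{\perp}M$ and factor $g=p\circ q$ as above with $q\in N$, $p\in M$. Dually, I would consider the square having $g$ on the left and $p\in M$ on the right, with $\mathrm{id}_Y$ along the bottom; the commutativity again follows from $p\circ q=g$. Since $g\in {}^{\perp}M$ and $p\in M$ we have $g\perp p$, yielding a lift $r:Y\to L$ with $r\circ g=q$ and $p\circ r=\mathrm{id}_Y$. Reading off the analogous three-by-two diagram (with vertical maps $q,p,q$ reversed in role) exhibits $g$ as a retract of $q\in N$, so $g\in R(N)$.

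I do not expect any serious obstacle here: the entire content is placing the factored map correctly in the lifting square and keeping careful track of the direction of the lifting properties, namely that $f\in N^{\perp}$ delivers $q\perp f$ in part (1), while $g\in {}^{\perp}M$ delivers $g\perp p$ in part (2). Once the lift is obtained, reading off the retract diagram is immediate, and the hypothesis $Mor(\cC)=M\circ N$ is used only to supply the factorization.
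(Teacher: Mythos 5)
Your proof is correct and follows exactly the paper's argument: the classical retract argument, factoring the map via $Mor(\cC)=M\circ N$, lifting against its own factorization, and reading off the retract diagram, with part (2) obtained by duality. No gaps.
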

\begin{proof}
We prove (1) and the proof of (2) is dual.
Let $h:A \to B \in N^{\perp}$. We can factor $h$ as:
$$A \xrightarrow{g\in N} C \xrightarrow{f\in M} B.$$
We get the commutative diagram:
$$\xymatrix{
 A \ar[d]_{g\in N}\ar@{=}[r]& A \ar[d]^{h \in N^{\perp}}\\
 C \ar[r]_{f} \ar@{..>}[ru]^{k} & B}$$
where the existence of $k$ is clear. Rearranging, we get:
$$\xymatrix{
 A \ar[dr]^h \ar[r]^g  & C \ar[d]^{f } \ar[r]^k  & A \ar[dl]^h\\
 \empty &  B & \empty,}$$
and we see that $h$ is a retract of $f\in M$.
\end{proof}

\begin{lem}\label{l_lift_wfs}
Assume $Mor(\cC)=M\circ N$ and $N \perp M$. Then $(R(N),R(M))$ is a weak factorization system in $\cC$
\end{lem}
\begin{proof}
$^{\perp}M$ and $N^{\perp}$ are clearly closed under retracts, so by Lemma ~\ref{l_lift} we get that: $R(N) \subseteq ^{\perp}M \subseteq R(N)$ and $R(M)\subseteq N^{\perp} \subseteq R(M)$. Now the Lemma follows from Lemma ~\ref{c:ret_lift}.
\end{proof}

\begin{lem}\label{l:MN_LWSP}
Assume $N \perp M$. Then $Lw^{\cong}(N) \perp Sp^{\cong}(M) $.
\end{lem}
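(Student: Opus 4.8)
The plan is to reduce the statement to a lifting problem against the simple objects of $M$, using the three-way equivalence $(M' \subseteq {}^\perp N') \Leftrightarrow (N' \subseteq M'^\perp) \Leftrightarrow (M' \perp N')$ together with Lemma \ref{l:SpMo_is_Mo}. Indeed, $Lw^{\cong}(N) \perp Sp^{\cong}(M)$ is equivalent to the inclusion $Lw^{\cong}(N) \subseteq {}^{\perp}Sp^{\cong}(M)$, and by Lemma \ref{l:SpMo_is_Mo} the right-hand side equals ${}^{\perp}M$, where $M$ is regarded as a class of morphisms between \emph{simple} objects of $\Pro(\cC)$. Hence it suffices to prove that every $g\in Lw^{\cong}(N)$ has the left lifting property, in $\Pro(\cC)$, against every $m\colon P\to Q$ in $M$ with $P,Q$ simple. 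This reduction is the key simplification: it replaces the special $M$-maps, whose matching-object structure is awkward to lift against directly, by ordinary morphisms of $\cC$.

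Next I would reduce to the case where $g$ is an honest natural transformation. Since the left lifting property is invariant under isomorphism in the arrow category, and $Lw^{\cong}(N)$ is closed under such isomorphisms by definition, I may assume $g$ comes from a levelwise $N$-map, i.e. a natural transformation $g\colon X\to Y$ of diagrams $X,Y\colon I\to\cC$ over a small directed category $I$, with each $g_i\colon X_i\to Y_i$ in $N$. Given a lifting square with top map $a\colon X\to P$ and bottom map $b\colon Y\to Q$, I use that maps into a simple object are computed by a filtered colimit: $a$ is represented by some $a_{i_0}\colon X_{i_0}\to P$ and $b$ by some $b_{j_0}\colon Y_{j_0}\to Q$.

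The main technical step is to produce a single index $t$ at which the lifting square already commutes in $\cC$. Using all three axioms of Definition \ref{d_directed}, I choose $t$ admitting morphisms $t\to i_0$ and $t\to j_0$ and lying far enough along that the two representatives of the common composite $m\circ a = b\circ g$ coincide inside $\colim_{t}\Hom_{\cC}(X_t,Q)$; naturality of $g$ then rewrites this as a commuting square $m\circ a_t = b_t\circ g_t$ in $\cC$, where $a_t,b_t$ are the evident restrictions of $a_{i_0},b_{j_0}$ along the chosen transition maps. Since $g_t\in N$, $m\in M$ and $N\perp M$, there is a lift $\ell_t\colon Y_t\to P$ with $\ell_t\circ g_t = a_t$ and $m\circ\ell_t = b_t$. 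Finally, $\ell_t$ represents a morphism $\ell\colon Y\to P$ in $\Pro(\cC)$, and a direct check at the representing level (again via naturality of $g$) shows $\ell\circ g = a$ and $m\circ\ell = b$, so $\ell$ is the desired lift.

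I expect the filtered-colimit bookkeeping in the third step to be the only genuine obstacle: one must be careful that the two composites agree only after passing to a sufficiently fine index $t$ (this is exactly where axiom (3) of directedness, coequalizing a parallel pair, is used), and that the induced $\Pro(\cC)$-morphism $\ell$ satisfies both triangle identities independently of the choice of representative. All remaining verifications are routine naturality chases.
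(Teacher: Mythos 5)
Your proposal is correct and follows essentially the same route as the paper: reduce via Lemma~\ref{l:SpMo_is_Mo} to lifting against maps in $M$ between simple objects, replace $f$ by a levelwise $N$-map, and factor the lifting square through a single level $t$ where $N\perp M$ applies. The filtered-colimit bookkeeping you spell out is exactly what the paper compresses into the phrase ``by the definition of morphisms in $\Pro(\cC)$, there exist $t\in T$ such that the above square factors.''
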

\begin{proof}
Let $f:X\to Y$ be a map in $Lw^{\cong}(N)$. We want to show that $f\in {}^\perp Sp^{\cong}(M)$. But ${}^\perp Sp^{\cong}(M) =  {}^\perp M$ by Lemma ~\ref{l:SpMo_is_Mo}, so it is enough to show that there exist a lift in every square in $Pro(\cC)$ of the form:
$$\xymatrix{
 X \ar[d]_{f}\ar[r]& A \ar[d]^{M}\\
 Y \ar[r]     & B.}$$
Without loss of generality, we may assume that $f:X\to Y$ is a natural transformation, which is is a level-wise $N$-map. Thus we have a diagram of the form:
$$\xymatrix{
\{X_t\}_{t\in T} \ar[d]_{f}\ar[r]& A \ar[d]^{{M}}\\
 \{Y_t\}_{t\in T} \ar[r]    &  B.}$$
By the definition of morphisms in $\Pro(\cC)$, there exist $t \in T$ such that the above square factors as:
$$
\xymatrix{
\{X_t\}_{t\in T} \ar[d]_{f}\ar[r]& X_t \ar[d]^{N}\ar[r]& A \ar[d]^{M}\\
\{Y_t\}_{t\in T}      \ar[r]     & Y_t \ar[r]     &  B.}$$
Since $N\perp M$ we have a lift in the right square of the above diagram, and so a lift in the original square as desired.
\end{proof}

\begin{prop}\label{p_wfs}
If $Mor(\mcal{C}) = M\circ N$ and $N  \perp M$, then $(Lw^{\cong}(N),R(Sp^{\cong}(M)))$ is a weak factorization system in $Pro(\cC)$.
\end{prop}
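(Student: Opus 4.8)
The plan is to obtain Proposition~\ref{p_wfs} as a formal consequence of the lemmas already assembled, by applying Lemma~\ref{l_lift_wfs} \emph{to the category $Pro(\cC)$ itself}, with $Lw^{\cong}(N)$ and $Sp^{\cong}(M)$ in the roles of $N$ and $M$. The first thing to notice is that Lemma~\ref{l_lift_wfs}, and the two results it rests on (Lemma~\ref{l_lift} and Lemma~\ref{c:ret_lift}), are purely formal statements about an arbitrary category equipped with a factorization $Mor=M\circ N$ and a lifting relation $N\perp M$; their proofs are diagram chases that use no finite limits or any other special feature of $\cC$. Hence they may be instantiated verbatim with $Pro(\cC)$ as the ambient category.

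To invoke Lemma~\ref{l_lift_wfs} for the pair $(Lw^{\cong}(N),Sp^{\cong}(M))$ in $Pro(\cC)$ I must check its two hypotheses. The factorization hypothesis, $Mor(Pro(\cC))=Sp^{\cong}(M)\circ Lw^{\cong}(N)$, is precisely Proposition~\ref{p_f}, available since $Mor(\cC)=M\circ N$. The lifting hypothesis, $Lw^{\cong}(N)\perp Sp^{\cong}(M)$, is precisely Lemma~\ref{l:MN_LWSP}, available since $N\perp M$. Both hypotheses therefore hold.

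Lemma~\ref{l_lift_wfs} then gives that $(R(Lw^{\cong}(N)),R(Sp^{\cong}(M)))$ is a weak factorization system in $Pro(\cC)$. It remains only to recognise the left-hand class: by Lemma~\ref{l:ret_lw} the class $Lw^{\cong}(N)$ is already closed under retracts, so $R(Lw^{\cong}(N))=Lw^{\cong}(N)$, and the system reads $(Lw^{\cong}(N),R(Sp^{\cong}(M)))$, exactly as claimed. The right-hand class needs no adjustment, since $R\circ R=R$.

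No real obstacle remains at this level: all the substantive content has been discharged earlier---the Reedy construction and the cofinal-functor theorem (Proposition~\ref{p_onto}) behind the factorization in Proposition~\ref{p_f}, and the inductive lifting argument of Lemma~\ref{l:SpMo_is_Mo} that underlies Lemma~\ref{l:MN_LWSP}. The only point that genuinely needs to be appreciated is that Lemma~\ref{l_lift_wfs} is category-agnostic and so transports to $Pro(\cC)$ word for word; granting that, the proposition is a one-step bootstrap.
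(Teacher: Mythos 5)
Your proof is correct and is essentially identical to the paper's own argument: the paper likewise combines Proposition~\ref{p_f}, Lemma~\ref{l:MN_LWSP}, and Lemma~\ref{l_lift_wfs} (applied in $Pro(\cC)$) to get that $(R(Lw^{\cong}(N)),R(Sp^{\cong}(M)))$ is a weak factorization system, and then invokes Lemma~\ref{l:ret_lw} to replace $R(Lw^{\cong}(N))$ by $Lw^{\cong}(N)$. Your explicit remark that Lemma~\ref{l_lift_wfs} is category-agnostic (so it transports to $Pro(\cC)$ even though it was stated under a finite-limits hypothesis) is a point the paper uses silently, and is a welcome clarification rather than a deviation.
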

\begin{proof}
$Mor(\mcal{C}) = M\circ N$ so by Proposition ~\ref{p_f} we have: $Mor(Pro(\mcal{C})) = Sp^{\cong}(M)\circ Lw^{\cong}(N)$.
Thus by Lemmas ~\ref{l_lift_wfs} and ~\ref{l:MN_LWSP} we have: $(R(Lw^{\cong}(N)),R(Sp^{\cong}(M)))$ is a weak factorization system in $Pro(\cC)$.
But by Lemma ~\ref{l:ret_lw} $R(Lw^{\cong}(N)) = Lw^{\cong}(N)$
which completes our proof.
\end{proof}

\section{Another model for $Pro(\cC)$}\label{s:barr}

In this section we will define a category equivalent to $Pro(\cC)$. This category can be thought of as another model for $Pro(\cC)$. This model will be more convenient for our construction of functorial factorizations, and we believe that it might also be convenient for other applications.

Throughout this section we let $\cC$ be an arbitrary category.

\subsection{Definition of $\barr{Pro}(\cC)$}
The purpose of this subsection is to define the category $\barr{Pro}(\cC)$.

\begin{define}
Let $A$ be a cofinite directed set. We will say that $A$ has infinite hight if for every $a\in A$ there exist $a'\in A$ s.t. $a<a'$.
\end{define}

We now wish to define a category which we denote $\barr{Pro}(\cC)$. An object in $\barr{Pro}(\cC)$ is a diagram $F:A\to \cC$, s.t. $A$ is a cofinite directed set of infinite hight. If we say that $F^A$ is an object in $\barr{Pro}(\cC)$ we will mean that $F$ is an object of $\barr{Pro}(\cC)$ and $A$ is its domain.

Let $F^A,G^B$ be objects in $\barr{Pro}(\cC)$. A \emph{pre morphism} from $F$ to $G$ is a pair $(\alpha,\phi)$, s.t. $\alpha:B\to A$ is a strictly increasing function, and $\phi:\alpha^*F=F\circ\alpha\to G$ is a natural transformation.

\begin{rem}\label{r_strictly}
The reason for demanding a \emph{strictly} increasing function in the definition of a pre morphism will not be clear until much later. See for example the construction of the functor: $\barr{Pro}(\cC^{\Delta^1})\to \barr{Pro}(\cC^{\Delta^2})$ in Section ~\ref{s_ff}.
\end{rem}

We now define a partial order on the set of pre morphisms from $F$ to $G$. Let $(\alpha,\phi)$,$(\alpha',\phi')$ be pre morphisms from $F$ to $G$. Then $(\alpha',\phi')\geq(\alpha,\phi)$ if for every $b\in B$ we have $\alpha'(b)\geq\alpha(b)$, and the following diagram commutes:
\[
\xymatrix{ & F(\alpha(b)) \ar[dr]^{\phi_b} & \\
F(\alpha'(b))\ar[ur]\ar[rr]^{\phi'_b} & & G(b),}
\]
(the arrow $F(\alpha'(b))\to F(\alpha(b))$ is of course the one induced by the unique morphism $\alpha'(b)\to \alpha(b)$ in $A$).

It is not hard to check that we have turned the set of pre morphisms from $F$ to $G$ into a poset. We define a \emph{morphism} from $F$ to $G$ in $\barr{Pro}(\cC)$ to be a connected component of this poset. If $(\alpha,\phi)$ is a pre morphisms from $F$ to $G$, we denote its connected component by $[\alpha,\phi]$. Let $[\alpha,\phi]:F^A\to G^B$ and $[\beta,\psi]:G^B\to H^C$ be morphisms in $\barr{Pro}(\cC)$. Their composition is defined to be $[\alpha\circ\beta,\psi\circ\phi_{\beta}]$. It is not hard to check that this is well defined, and turns $\barr{Pro}(\cC)$ into a category (note that if $(\alpha',\phi')\geq(\alpha,\phi)$ then $(\alpha'\circ\beta,\psi\circ\phi'_{\beta})\geq(\alpha\circ\beta,\psi\circ\phi_{\beta})$, and if $(\beta',\psi')\geq(\beta,\psi)$ then $(\alpha\circ\beta',\psi'\circ\phi_{\beta'})\geq(\alpha\circ\beta,\psi\circ\phi_{\beta})$).

\subsection{Equivalence of $\barr{Pro}(\cC)$ and $Pro(\cC)$}

In this subsection we construct a natural functor $i:\barr{Pro}(\cC)\to Pro(\cC)$. We then show that $i$ is a subcategory inclusion, that is essentially surjective. It follows that $i$ is a categorical equivalence.

Let $F:A\to \cC$ be an object in $\barr{Pro}(\cC)$. Then clearly $i(F):=F$ is also an object ${Pro}(\cC)$.

Let $F^A,G^B$ be objects in $\barr{Pro}(\cC)$, and let $(\alpha,\phi)$ be a pre morphism from $F$ to $G$. Then $(\alpha,\phi)$ determines a morphism $F\to G$ in $\Pro(C)$ (for every $b\in B$ take the morphism $\phi_b:F_{\alpha(b)}\to G_b$). Suppose now that $(\alpha',\phi')$ is another pre morphism from $F$ to $G$, s.t. $(\alpha',\phi')\geq(\alpha,\phi)$. Then it is clear from the definition of the partial order on pre morphisms, that for every $b\in B$ the morphisms $\phi_b:F(\alpha(b))\to G(b)$ and $\phi'_b:F(\alpha'(b))\to G(b)$ represent the same object in $\colim_{i\in A}\Hom_{\cC}(F(i),G(b))$. Thus $(\alpha',\phi')$ and $(\alpha,\phi)$ determine the same morphism $F\to G$ in $Pro(\cC)$. It follows, that a morphism $F\to G$ in $\barr{Pro}(\cC)$ determines a well defined morphism $i(F)\to i(G)$ in $Pro(\cC)$ through the above construction. This construction clearly commutes with compositions and identities, so we have defined a functor: $i:\barr{Pro}(\cC)\to Pro(\cC)$.

\begin{prop}\label{p_full}
The functor $i:\barr{Pro}(\cC)\to Pro(\cC)$ is full.
\end{prop}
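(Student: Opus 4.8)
The plan is to prove fullness by showing that for every pair of objects $F^A,G^B$ the map $\Hom_{\barr{Pro}(\cC)}(F,G)\to\Hom_{Pro(\cC)}(i(F),i(G))$ is surjective. So I would fix a morphism $f:i(F)\to i(G)$ in $Pro(\cC)$ and construct a pre morphism $(\alpha,\phi)$ from $F$ to $G$ with $i([\alpha,\phi])=f$. Unwinding the definition, $f\in\lim_b\colim_a\Hom_\cC(F(a),G(b))$; thus each component $f_b$ is represented by some $\psi\colon F(t)\to G(b)$ (well defined up to the colimit relation), and the fact that $(f_b)_b$ lies in the limit means that for every pair $b'\le b$ the class of $F(t)\xrightarrow{\psi}G(b)\to G(b')$ in $\colim_a\Hom_\cC(F(a),G(b'))$ equals $f_{b'}$. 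I would build $\alpha$ and $\phi$ by recursion on the degree function $d=d_B$ of the cofinite poset $B$ (Definition~\ref{def_deg}).

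Assume $\alpha,\phi$ have been defined on $B^{n-1}$ so that $\alpha|_{B^{n-1}}$ is strictly increasing, each $\phi_{b'}\colon F(\alpha(b'))\to G(b')$ represents $f_{b'}$, and all naturality squares among elements of $B^{n-1}$ commute. Pick $b$ with $d(b)=n$. Because $B$ is cofinite, the set $\{b'\in B: b'<b\}=B_b\setminus\{b\}$ is finite, and each such $b'$ satisfies $d(b')<n$, so it lies in $B^{n-1}$ and $\alpha(b'),\phi_{b'}$ are already available. Choose a representative $\psi\colon F(t)\to G(b)$ of $f_b$. For each $b'<b$, the limit-compatibility of $f$ for the pair $b'\le b$ says that $F(t)\xrightarrow{\psi}G(b)\to G(b')$ and $\phi_{b'}$ become equal in the colimit, so there is an index $u_{b'}\in A$ with $u_{b'}\ge t$ and $u_{b'}\ge\alpha(b')$ at which the two composites $F(u_{b'})\to F(t)\to G(b')$ and $F(u_{b'})\to F(\alpha(b'))\xrightarrow{\phi_{b'}}G(b')$ literally agree.

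This is where the hypotheses on $A$ enter. Since $A$ is directed it has an upper bound of the finite set $\{t\}\cup\{u_{b'}:b'<b\}$, and since $A$ has infinite height I can enlarge that bound to an element $a$ strictly above it; in particular $a\ge t$, $a\ge u_{b'}$, and $a>\alpha(b')$ for all $b'<b$. I set $\alpha(b):=a$ and let $\phi_b$ be the composite $F(a)\to F(t)\xrightarrow{\psi}G(b)$. Since distinct elements of degree $n$ are incomparable, this extends $\alpha,\phi$ to all of $B^n$ with $\alpha|_{B^n}$ strictly increasing. As $a\ge t$, $\phi_b$ still represents $f_b$; and for $b'<b$ the required square $F(a)\xrightarrow{\phi_b}G(b)\to G(b')=F(a)\to F(\alpha(b'))\xrightarrow{\phi_{b'}}G(b')$ is obtained by precomposing the equality at $u_{b'}$ with $F(a)\to F(u_{b'})$ and using functoriality of $F$. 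Hence all squares among elements of $B^n$ commute, completing the recursive step. Taking the union over $n$ yields a strictly increasing $\alpha\colon B\to A$ and a natural transformation $\phi\colon\alpha^*F\to G$ with $\phi_b$ representing $f_b$ for each $b$, so $i([\alpha,\phi])=f$.

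I expect the main obstacle to be the tension between needing \emph{honest} commuting naturality squares for $\phi$ and the fact that each $f_b$ is only prescribed up to the colimit equivalence: a naive choice of representatives need not assemble into a natural transformation. This is precisely what the equalizing indices $u_{b'}$, produced by the limit condition on $f$, are meant to resolve. The secondary point to watch is that a pre morphism requires $\alpha$ to be \emph{strictly} increasing, which is exactly what forces the use of the infinite-height hypothesis on $A$ (directedness alone would only give $a\ge\alpha(b')$).
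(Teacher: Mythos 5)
Your proposal is correct and takes essentially the same approach as the paper: both prove fullness by constructing the pre-morphism $(\alpha,\phi)$ recursively over the degree filtration $B^{n-1}\subset B^n\subset\dots$, using the colimit relation to equalize a chosen representative of $f_b$ against the already-constructed $\phi_{b'}$ for the finitely many $b'<b$, and invoking the infinite-height hypothesis on $A$ to make $\alpha$ strictly increasing. The only difference is bookkeeping: the paper runs an inner induction over the predecessors $b_1,\dots,b_k$, building a chain $a_0\leq a_1\leq\dots\leq a_k$ of indices, whereas you equalize all predecessors in parallel at indices $u_{b'}$ and then take a single upper bound by directedness — an immaterial variation.
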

\begin{proof}
Let $F^A,G^B$ be objects in $\barr{Pro}(\cC)$. Let $f:F\to G$ be a morphism in $Pro(\cC)$. We need to construct a pre morphism $(\alpha,\phi)$ from $F$ to $G$ that induces our given $f$.

We will define $\alpha:B\to A$, and $\phi:F\circ\alpha\to G$ recursively.

Let $n\geq 0$. Suppose we have defined a strictly increasing function $\alpha:B^{n-1}\to A$, and a natural transformation $\phi:F\circ\alpha\to G|_{B^{n-1}}$, s.t. for every $b\in B^{n-1}$ the morphism $\phi_b:F(\alpha(b))\to G(b)$ represents $f$ (see Definition ~\ref{def_deg} and the remarks after Definition ~\ref{def_pro}).

Let $b\in B^n \setminus B^{n-1}$. We prove the following:
\begin{lem}
There exist $\alpha(b)\in A$ and a morphism $\phi_b:F(\alpha(b))\to G(b)$ representing $f$, s.t. for every $b'\in B_b^{n-1}$ (see Definition ~\ref{def CDS}) we have $\alpha(b)> \alpha(b')$ and the following diagram commutes:
$$\xymatrix{F(\alpha(b))\ar[r]^{\phi_{b}}\ar[d]_{F(\alpha(b)\to \alpha(b'))} & G(b)\ar[d]^{G(b\to b_{i+1})} \\
F(\alpha(b'))\ar[r]^{\phi_{b'}} & G(b').}$$
\end{lem}
\begin{proof}
Write $B_b^{n-1}=\{b_1,...,b_k\}$. We will prove the following by induction on $i$:
For every $i=0,...,k$ there exist $a_i\in A$ and a morphism $F(a_i)\to G(b)$ representing $f$, s.t. for every $1\leq j\leq i$ we have $a_i\geq \alpha(b_j)$ and the following diagram commutes:
$$\xymatrix{F(a_i)\ar[r]\ar[d]_{F(a_{i}\to \alpha(b_{j}))} & G(b)\ar[d]^{G(b\to b_{j})} \\
F(\alpha(b_j))\ar[r]^{\phi_{b_j}} & G(b_j)}.$$

$i=0$. Choose $a_0\in A$ and a morphism $F(a_0)\to G(b)$ representing $f$.

Suppose we have proved the above for some $i\in\{0,...,k-1\}$.

We will prove the above for $i+1$. The morphisms $F(a_i)\to G(b)$ and $\phi_{b_{i+1}}:F(\alpha(b_{i+1}))\to G(b_{i+1})$ both represent $f$. $b\geq b_{i+1}$, so the compatibility of the representing morphisms implies that $\phi_{b_{i+1}}$ and the composition
$$F(a_i)\to G(b)\xrightarrow{G(b\to b_{i+1})}G(b_{i+1})$$
represent the same element in $\colim_{a\in A}\Hom_{\cC}(F(a),G(b_{i+1}))$. Thus, there exist $a_{i+1}\in A$ s.t. $a_{i+1}\geq a_i, \alpha(b_{i+1})$ and the following diagram commutes:
$$\xymatrix{                                                                & F(a_i)\ar[dr]            &                               & \\
F(a_{i+1})\ar[ur]^{F(a_{i+1}\to a_i)}\ar[dr]_{F(a_{i+1}\to \alpha(b_{i+1}))}&                          & G(b)\ar[dr]^{G(b\to b_{i+1})} & \\
                                                                            & F(\alpha(b_{i+1}))\ar[rr]^{\phi_{b_{i+1}}} &             & G(b_{i+1})}.$$
It is not hard to verify that taking $F(a_{i+1})\to G(b)$ to be the morphism described in the diagram above finishes the inductive step.

Since $A$ has infinite hight we can find $\alpha(b)\in A$ s.t. $\alpha(b)>a_k$. Defining $\phi_b$ to be the composition:
$$F(\alpha(b))\xrightarrow{F(\alpha(b)\to a_{k})}F(a_{k})\to G(b),$$
finishes the proof of the lemma.
\end{proof}
The above lemma completes the recursive definition, and thus the proof of the proposition.
\end{proof}

We now wish to prove that $i$ is faithful. We will prove a stronger result:
\begin{prop}\label{p_directed}
Let $F^A,G^B$ be objects in $\barr{Pro}(\cC)$, and let $(\alpha,\phi)$,$(\alpha',\phi')$ be pre morphisms from $F$ to $G$. Assume that $(\alpha,\phi)$ and $(\alpha',\phi')$ induce the same morphism $f:F\to G$ in $Pro(\cC)$. Then there exist a  pre morphism $(\alpha'',\phi'')$ from $F$ to $G$ s.t. $(\alpha'',\phi'')\geq(\alpha,\phi),(\alpha',\phi')$.
\end{prop}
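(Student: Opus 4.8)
The plan is to build the common upper bound $(\alpha'',\phi'')$ directly, as a \emph{single} pre morphism dominating both, by a recursion over the degree function $d=d_B\colon B\to\NN$ in exactly the style of the fullness argument in Proposition~\ref{p_full}. The pre morphism I construct will satisfy, for every $b\in B$, both $\alpha''(b)\geq\alpha(b)$ with $\phi''_b=\phi_b\circ F(\alpha''(b)\to\alpha(b))$ and $\alpha''(b)\geq\alpha'(b)$ with $\phi''_b=\phi'_b\circ F(\alpha''(b)\to\alpha'(b))$. By the definition of the partial order on pre morphisms these two triangle conditions are precisely what is needed to witness $(\alpha'',\phi'')\geq(\alpha,\phi)$ and $(\alpha'',\phi'')\geq(\alpha',\phi')$ simultaneously, so producing such a pre morphism finishes the proof.

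Assume $\alpha''$ and $\phi''$ have been defined on $B^{n-1}$, strictly increasing and natural there, with the two factorization properties above. Fix $b\in B^n\setminus B^{n-1}$ and write $B_b^{n-1}=\{b_1,\dots,b_k\}$; note that these are exactly the strict predecessors of $b$, since $b'<b$ forces $d(b')<d(b)=n$. The hypothesis that $(\alpha,\phi)$ and $(\alpha',\phi')$ induce the \emph{same} morphism $f$ enters exactly once, at the base of an inner induction: because $\phi_b$ and $\phi'_b$ both represent the $b$-coordinate of $f$ in $\colim_{a\in A}\Hom_\cC(F(a),G(b))$, there is $a_0\in A$ with $a_0\geq\alpha(b),\alpha'(b)$ and $\phi_b\circ F(a_0\to\alpha(b))=\phi'_b\circ F(a_0\to\alpha'(b))=:\psi_0$, a morphism representing $f$ at $b$ that factors both $\phi_b$ and $\phi'_b$.

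I then run the inner induction on $i=0,\dots,k$ verbatim as in Proposition~\ref{p_full}. Given $a_i\geq a_0$ and $\psi_i=\psi_0\circ F(a_i\to a_0)$ representing $f$ and satisfying the naturality squares against $\phi''_{b_1},\dots,\phi''_{b_i}$, I use the compatibility of the coordinates of $f$ along the morphism $b\geq b_{i+1}$ to find $a_{i+1}\geq a_i,\alpha''(b_{i+1})$ equalizing $G(b\to b_{i+1})\circ\psi_i$ with $\phi''_{b_{i+1}}$ after pullback, and set $\psi_{i+1}=\psi_i\circ F(a_{i+1}\to a_i)$. After $k$ steps $a_k$ dominates $\alpha(b),\alpha'(b)$ and every $\alpha''(b_j)$, and $\psi_k$ is compatible with all the $\phi''_{b_j}$ while still factoring $\phi_b$ and $\phi'_b$. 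Finally I invoke the \emph{infinite hight} of $A$ to choose $\alpha''(b)>a_k$ \emph{strictly}, and define $\phi''_b=\psi_k\circ F(\alpha''(b)\to a_k)$; the strict inequality propagates to $\alpha''(b)>\alpha''(b_j)$ for all $j$, keeping $\alpha''$ strictly increasing, while the factorizations through $\psi_0$ and the naturality squares pass to $\phi''_b$ by precomposition.

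The remaining verifications are routine transcriptions: the two triangle conditions hold because $\phi''_b$ is a pullback of $\psi_0$ and $\psi_0$ factors both $\phi_b$ and $\phi'_b$; the naturality squares with each $\phi''_{b_j}$ hold by the inner induction; and $\alpha''$ is strictly increasing because the comparable pairs $b>b'$ all lie in some $B_b^{n-1}$ and were handled with a strict inequality. The one genuinely delicate point I expect is the bookkeeping of the infinite-hight hypothesis: it is precisely what lets $\alpha''$ remain \emph{strictly} increasing (a mere $\alpha''(b)\geq a_k$ would not suffice to guarantee $\alpha''(b)>\alpha''(b_j)$), so without it the recursion would produce only an increasing, not a strictly increasing, function and hence not a pre morphism. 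Everything else is the fullness argument with the single added ingredient that $\phi_b$ and $\phi'_b$ are equalized at the outset.
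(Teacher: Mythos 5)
Your proposal is correct and follows essentially the same route as the paper's own proof: an outer recursion over the degree function on $B$, an inner induction over the finite set $B_b^{n-1}=\{b_1,\dots,b_k\}$ whose base case equalizes $\phi_b$ and $\phi'_b$ using that both represent the same coordinate of $f$ in $\colim_{a\in A}\Hom_{\cC}(F(a),G(b))$, an inductive step using compatibility of representatives along $b\geq b_{i+1}$, and a final appeal to the infinite height of $A$ to choose $\alpha''(b)>a_k$ so that $\alpha''$ stays strictly increasing. Your closing observation about why infinite height is indispensable for strictness is exactly the role it plays in the paper's argument as well.
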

\begin{proof}
We will define $\alpha'':B\to A$ and $\phi'':F\circ\alpha\to G$ recursively.

Let $n\geq 0$. Suppose we have defined a strictly increasing function $\alpha'':B^{n-1}\to A$ and a natural transformation $\phi'':F\circ\alpha''\to G|_{B^{n-1}}$, s.t. for every $b\in B^{n-1}$ we have $\alpha''(b)\geq \alpha(b),\alpha'(b)$ and the following diagram commutes:
$$\xymatrix{ &   F(\alpha(b))\ar[dr]^{\phi_b}     & \\
F(\alpha''(b))\ar[ur]^{F(\alpha''(b)\to \alpha(b))}\ar[dr]_{F(\alpha''(b)\to \alpha'(b))} \ar[rr]^{\phi''_b} & & G(b) \\
             &   F(\alpha'(b))\ar[ur]_{\phi'_b}   & }$$
(see Definition ~\ref{def_deg}).

Let $b\in B^n \setminus B^{n-1}$.
We prove the following:
\begin{lem}
There exist $\alpha''(b)\in A$ and a morphism $\phi''_b:F(\alpha''(b))\to G(b)$, s.t. for every $b'\in B^{n-1}_b$ (see Definition ~\ref{def CDS}) we have $\alpha''(b)> \alpha''(b')$ and the following diagram commutes:
$$\xymatrix{F(\alpha''(b))\ar[r]^{\phi''_b}\ar[d]_{F(\alpha''(b)\to \alpha''(b'))} & G(b)\ar[d]^{G(b\to b')} \\
F(\alpha''(b'))\ar[r]^{\phi''_{b'}} & G(b'),}$$
and we have $\alpha''(b)\geq \alpha(b), \alpha'(b)$ and the following diagram commutes:
$$\xymatrix{ &   F(\alpha(b))\ar[dr]^{\phi_b}     & \\
F(\alpha''(b))\ar[ur]^{F(\alpha''(b)\to \alpha(b))}\ar[dr]_{F(\alpha''(b)\to \alpha'(b))}\ar[rr]^{\phi''_b} & & G(b). \\
             &   F(\alpha'(b))\ar[ur]_{\phi'_b}   & }$$
\end{lem}
\begin{proof}
Write $B_b^{n-1}=\{b_1,...,b_k\}$. We will prove the following by induction on $i$:
For every $i=0,...,k$ there exist $a_i\in A$ and a morphism $F(a_i)\to G(b)$, s.t. for every $1\leq j\leq i$ we have $a_i\geq \alpha''(b_j)$ and the following diagram commutes:
$$\xymatrix{F(a_i)\ar[r]\ar[d]_{F(a_{i}\to \alpha''(b_{j}))} & G(b)\ar[d]^{G(b\to b_{j})} \\
F(\alpha''(b_j))\ar[r]^{\phi''_{b_j}} & G(b_j)},$$
and we have $a_i \geq \alpha(b), \alpha'(b)$ and the following diagram commutes:
$$\xymatrix{ &   F(\alpha(b))\ar[dr]^{\phi_b}     & \\
F(a_i)\ar[ur]^{F(a_i\to \alpha(b))}\ar[dr]_{F(a_i\to \alpha'(b))}\ar[rr] & & G(b). \\
             &   F(\alpha'(b))\ar[ur]_{\phi'_b}   & }$$

$i=0$. The morphisms $\phi_b:F(\alpha(b))\to G(b)$ and $\phi'_b:F(\alpha'(b))\to G(b)$ represent the same element in $\colim_{a\in A}\Hom_{\cC}(F(a),G(b))$. It follows that there exist $a_0\in A$ s.t. $a_0\geq \alpha(b), \alpha'(b)$ and the following diagram commutes:
$$\xymatrix{ &   F(\alpha(b))\ar[dr]^{\phi_b}     & \\
F(a_0)\ar[ur]^{F(a_0\to \alpha(b))}\ar[dr]_{F(a_0\to \alpha'(b))} & & G(b). \\
             &   F(\alpha'(b))\ar[ur]_{\phi'_b}   & }$$
We thus \emph{define} the morphism $F(a_0)\to G(b)$ to be the one described in the diagram above.

Suppose we have proved the above for some $i\in\{0,...,k-1\}$.

We will prove the above for $i+1$. The morphisms $F(a_i)\to G(b)$ and $\phi''_{b_{i+1}}:F(\alpha''(b_{i+1}))\to G(b_{i+1})$ both represent $f$. $b\geq b_{i+1}$, so the compatibility of the representing morphisms implies that $\phi''_{b_{i+1}}$ and the composition
$$F(a_i)\to G(b)\xrightarrow{G(b\to b_{i+1})}G(b_{i+1})$$
represent the same object in $\colim_{a\in A}\Hom_{\cC}(F(a),G(b_{i+1}))$. Thus, there exist $a_{i+1}\in A$ s.t. $a_{i+1}\geq a_i, \alpha''(b_{i+1})$ and the following diagram commutes:
$$\xymatrix{                                                                & F(a_i)\ar[dr]            &                               & \\
F(a_{i+1})\ar[ur]^{F(a_{i+1}\to a_i)}\ar[dr]_{F(a_{i+1}\to \alpha''(b_{i+1}))}&                          & G(b)\ar[dr]^{G(b\to b_{i+1})} & \\
                                                                            & F(\alpha''(b_{i+1}))\ar[rr]^{\phi''_{b_{i+1}}} &             & G(b_{i+1})}.$$
It is not hard to verify that taking $F(a_{i+1})\to G(b)$ to be the morphism described in the diagram above finishes the inductive step.

Since $A$ has infinite hight we can find $\alpha''(b)\in A$ s.t. $\alpha''(b)>a_k$. Defining $\phi''_b$ to be the composition:
$$F(\alpha''(b))\xrightarrow{F(\alpha''(b)\to a_{k})}F(a_{k})\to G(b),$$
finishes the proof of the lemma.
\end{proof}
The above lemma completes the recursive definition, and thus the proof of the proposition.
\end{proof}

\begin{cor}
The functor $i:\barr{Pro}(\cC)\to Pro(\cC)$ is faithful.
\end{cor}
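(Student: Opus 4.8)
The plan is to read faithfulness off directly from Proposition \ref{p_directed}, since that proposition already contains all the substance. First I would unwind what faithfulness means here: I must show that for all objects $F^A,G^B\in\barr{Pro}(\cC)$ the map
$$\Hom_{\barr{Pro}(\cC)}(F,G)\to\Hom_{Pro(\cC)}(i(F),i(G))$$
induced by $i$ is injective. Recalling that a morphism in $\barr{Pro}(\cC)$ is \emph{by definition} a connected component of the poset of pre-morphisms from $F$ to $G$, and that (as established in the paragraph constructing $i$, just before Proposition \ref{p_full}) any two pre-morphisms lying in the same connected component induce the same morphism in $Pro(\cC)$, injectivity amounts to the converse statement: if two pre-morphisms $(\alpha,\phi)$ and $(\alpha',\phi')$ induce the same morphism $f$ in $Pro(\cC)$, then they lie in the same connected component.

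Next I would simply invoke Proposition \ref{p_directed} on the pair $(\alpha,\phi),(\alpha',\phi')$, which produces a pre-morphism $(\alpha'',\phi'')$ dominating both, that is $(\alpha'',\phi'')\geq(\alpha,\phi)$ and $(\alpha'',\phi'')\geq(\alpha',\phi')$. Since $(\alpha,\phi)$ and $(\alpha',\phi')$ are each comparable to the common element $(\alpha'',\phi'')$, the zigzag $(\alpha,\phi)\leq(\alpha'',\phi'')\geq(\alpha',\phi')$ connects them inside the poset of pre-morphisms; hence they represent the same connected component, i.e. $[\alpha,\phi]=[\alpha',\phi']$. This gives injectivity and therefore faithfulness. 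Together with the fullness already proved in Proposition \ref{p_full}, this makes $i$ fully faithful, which (with essential surjectivity, clear from the object function of $i$) yields the promised categorical equivalence.

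The corollary carries essentially no obstacle of its own: all the difficulty has already been absorbed into the recursive construction of the dominating pre-morphism $(\alpha'',\phi'')$ in Proposition \ref{p_directed}, which is where the interchange of colimits and limits defining $\Hom_{Pro(\cC)}$ is genuinely used, and where the infinite-height hypothesis on $A$ is exploited to keep each stage of $\alpha''$ strictly increasing. The only point that needs even a word of care at this stage is the purely formal one—that $i$ sends a connected component to a well-defined morphism independent of the chosen representative—but this is exactly the compatibility already verified when the functor $i$ was defined, so the proof here is a short unwinding of definitions.
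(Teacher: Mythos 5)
Your proof is correct and is exactly the argument the paper intends: the corollary is stated immediately after Proposition \ref{p_directed} precisely because faithfulness follows from it by the zigzag $(\alpha,\phi)\leq(\alpha'',\phi'')\geq(\alpha',\phi')$ placing the two pre-morphisms in the same connected component. Your additional remark that well-definedness of $i$ on components was already checked when $i$ was constructed is also how the paper handles that point, so there is nothing to add.
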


\begin{cor}\label{c_directed}
Let $F,G$ be objects in $\barr{Pro}(\cC)$. Then every connected component of the poset of pre morphisms from $F$ to $G$ (that is, every morphism from $F$ to $G$ in $\barr{Pro}(\cC)$) is directed.
\end{cor}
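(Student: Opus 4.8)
The plan is to reduce directedness to Proposition \ref{p_directed}, exploiting the fact that the morphism induced in $Pro(\cC)$ is constant on each connected component of $P(F,G)$. Recall that a poset is directed exactly when it is nonempty and every pair of its elements admits a common upper bound. A connected component is nonempty by definition, so the entire task is to produce, for any two pre-morphisms lying in one component, a common upper bound that again lies in that component.

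First I would record that the assignment sending a pre-morphism $(\alpha,\phi)$ to the morphism $i$ attaches to it in $Pro(\cC)$ is constant on connected components of $P(F,G)$. This was in effect already established in the paragraph preceding Proposition \ref{p_full}: whenever $(\alpha',\phi')\geq(\alpha,\phi)$, the two pre-morphisms determine the very same morphism $F\to G$ in $Pro(\cC)$. Since membership in a common connected component is precisely the equivalence relation generated by comparability, and the induced morphism is unchanged across each comparability step, it is unchanged along any finite zig-zag, hence constant on the whole component.

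Now fix a connected component $\cK$ of $P(F,G)$ and take $(\alpha,\phi),(\alpha',\phi')\in\cK$. By the observation just made, the two induce one and the same morphism $f:F\to G$ in $Pro(\cC)$. Proposition \ref{p_directed} then produces a pre-morphism $(\alpha'',\phi'')$ with $(\alpha'',\phi'')\geq(\alpha,\phi)$ and $(\alpha'',\phi'')\geq(\alpha',\phi')$. Because $(\alpha'',\phi'')$ is comparable to (indeed above) both of the given pre-morphisms, it lies in the same connected component $\cK$, and it is a common upper bound for them within $\cK$. Hence $\cK$ is directed.

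The genuine mathematical content is entirely concentrated in Proposition \ref{p_directed}, which is already proved; no additional obstacle is expected here. The only point demanding any care is the constancy of the induced morphism on connected components, and this is immediate from the monotonicity of $i$ recalled above, so the corollary follows formally.
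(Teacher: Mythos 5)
Your proof is correct and follows exactly the route the paper intends: the corollary is stated without proof precisely because it is the formal consequence of Proposition~\ref{p_directed} you describe, combined with the observation (made in the construction of the functor $i$) that comparable pre-morphisms induce the same morphism in $Pro(\cC)$, hence so do any two pre-morphisms in the same connected component. Your added care about nonemptiness and the zig-zag argument for constancy on components fills in details the paper leaves implicit, but the mathematical content is the same.
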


%Corollary ~\ref{c_directed} gives a conceptual justification for identifying all the pre morphisms in a given connected component. Since every connected component is directed, it is contractible as a space.

We have shown that the functor $i:\barr{Pro}(\cC)\to Pro(\cC)$ is (isomorphic to) a full subcategory inclusion. From Proposition ~\ref{p_onto} it follows immediately that $i$ is essentially surjective on objects. Thus we obtain the following:

\begin{cor}\label{c_equiv}
The functor $i:\barr{Pro}(\cC)\to Pro(\cC)$ is (isomorphic to) a full subcategory inclusion, and is essentially surjective on objects. Thus it is an equivalence of categories.
\end{cor}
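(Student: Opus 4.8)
The final statement to prove is Corollary~\ref{c_equiv}, which asserts that $i:\barr{Pro}(\cC)\to Pro(\cC)$ is an equivalence of categories. The plan is to assemble this from the three structural facts already in hand: fullness (Proposition~\ref{p_full}), faithfulness (the Corollary following Proposition~\ref{p_directed}), and essential surjectivity. Recall that a functor that is full, faithful, and essentially surjective on objects is an equivalence of categories; this is the standard criterion I would invoke directly.

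\emph{Essential surjectivity} is the only piece not yet explicitly recorded, so I would address it first. Let $X:I\to\cC$ be an arbitrary object of $Pro(\cC)$, so $I$ is a small directed category. By Proposition~\ref{p_onto} there is a small cofinite directed set $A$ of infinite height together with a cofinal functor $p:A\to I$. Then $p^*X = X\circ p: A\to\cC$ is a diagram indexed by a cofinite directed set of infinite height, hence is an object of $\barr{Pro}(\cC)$ with $i(p^*X)=p^*X$. The key point is that a cofinal functor induces an isomorphism on pro-objects: since $p$ is cofinal, the reindexed pro-object $p^*X$ is isomorphic to $X$ in $Pro(\cC)$ (this is exactly why cofinality is the relevant notion, and it follows from the cofinal comparison of the limits/colimits defining the hom-sets in Definition~\ref{def_pro}). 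Therefore every object of $Pro(\cC)$ is isomorphic to one in the image of $i$, which is essential surjectivity.

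\emph{Fullness and faithfulness} are then cited verbatim. Proposition~\ref{p_full} gives that $i$ is full. For faithfulness, Proposition~\ref{p_directed} shows that any two pre morphisms inducing the same map in $Pro(\cC)$ admit a common upper bound in the poset $P(F,G)$, hence lie in the same connected component; since a morphism in $\barr{Pro}(\cC)$ is by definition a connected component of $P(F,G)$, two pre morphisms inducing the same map in $Pro(\cC)$ represent the \emph{same} morphism in $\barr{Pro}(\cC)$, which is precisely faithfulness (the Corollary stated immediately after Proposition~\ref{p_directed}). Combining fullness, faithfulness, and essential surjectivity yields that $i$ is an equivalence, and moreover $i$ is (isomorphic to) a full subcategory inclusion because it is injective on objects and fully faithful.

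I do not anticipate a serious obstacle here, since all the hard work is carried by the earlier propositions; the one point requiring care is spelling out that cofinality of $p:A\to I$ yields an actual isomorphism $p^*X\cong X$ in $Pro(\cC)$ rather than merely a map, so that essential surjectivity is genuinely established. This is a classical fact about pro-categories, and I would simply record it as the reason essential surjectivity follows from Proposition~\ref{p_onto}, then quote the general category-theoretic criterion to conclude.
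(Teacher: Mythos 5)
Your proposal is correct and follows exactly the paper's route: fullness from Proposition~\ref{p_full}, faithfulness from the corollary to Proposition~\ref{p_directed}, and essential surjectivity from Proposition~\ref{p_onto} via the classical fact that a cofinal functor induces an isomorphism of pro-objects. The paper merely states this more tersely; your spelling out of why cofinality gives $p^*X\cong X$ in $Pro(\cC)$ is the same argument made explicit.
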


\section{Functorial factorizations in pro categories}\label{s_ff}
The purpose of this section is to prove Theorem ~\ref{t_main}.  It is done in Theorem ~\ref{t_fact} and Corollary ~\ref{c_wfs}.

Throughout this section, let $\cC$ be a category that has finite limits and let $N,M$ be classes of morphisms in $\cC$. We begin with some definitions:

\begin{define}
For any $n\geq 0$ let $\Delta^n$ denote the linear poset: $\{0,...,n\}$, considered as a category with a unique morphism $i\to j$ for any $i\leq j$.
\end{define}

\begin{define}\label{d_ff}
Let $\cD$ be a category.

A functorial factorization in $\cD$,  is a section to the composition functor: $\circ:\cD^{\Delta^2}\to \cD^{\Delta^1}$ (which is the pull back to the inclusion: ${\Delta^1}\cong\Delta^{\{0,2\}}\hookrightarrow{\Delta^2}$).

Thus a functorial factorization in $\cD$ consists of a functor:
$\cD^{\Delta^1}\to\cD^{\Delta^2}$ denoted:
$$(X\xrightarrow{f}Y)\mapsto ({X}\xrightarrow{q_f} L_{f}\xrightarrow{p_f}Y)$$
s.t.
\begin{enumerate}
\item For any morphism $f$ in $\cD$ we have: $f=p_f\circ q_f$.
\item For any morphism:
$$\xymatrix{{X}\ar[r]^{f}\ar[d]_{l} & {Y}\ar[d]^{k}\\
            {Z} \ar[r]^t   &   {W},}$$
in $\cD^{\Delta^1}$ the corresponding morphism in $\cD^{\Delta^2}$ is of the form:
$$\xymatrix{{X}\ar[r]^{q_f}\ar[d]_{l} & L_{f}\ar[r]^{h_f}\ar[d]^{L_{(l,k)}} & {Y}\ar[d]^{k}\\
            {Z} \ar[r]^{q_t} & L_t\ar[r]^{p_t}    &   {W}.}$$
\end{enumerate}

Suppose $\cA$ and $\cB$ are classes of morphisms in $\cD$. The above functorial factorization is said to be into a morphism in $\cA$ followed be a morphism in $\cB$, if for every $f\in Mor(\cC)$ we have $q_f\in \cA,p_f\in \cB$.

\end{define}

Notice, that a functorial factorization in $\cD$ is just a morphism in the over category $Cat_{/\cD^{\Delta^1}}$ $id|_{\cD^{\Delta^1}}\to\circ$, where $id|_{\cD^{\Delta^1}}\in Cat_{/\cD^{\Delta^1}}$ is the terminal object and $\circ:\cD^{\Delta^2}\to \cD^{\Delta^1}$ is the composition functor.

\begin{rem}
The definition above of a functorial factorization agrees with the one given in \cite{Rie}. It is slightly stronger then the one given in \cite{Hov} Definition 1.1.1.
\end{rem}

For technical reasons we will also consider the following weaker notion:
\begin{define}\label{d_wff}
Let $\cD$ be a category.

A weak functorial factorization in $\cD$ is a section, up to a natural isomorphism, to the composition functor: $\circ:\cD^{\Delta^2}\to \cD^{\Delta^1}$.

If $\cA$ and $\cB$ are classes of morphisms in $\cD$, we can define a weak functorial factorization into a morphism in $\cA$ followed be a morphism in $\cB$, in the same way as in Definition ~\ref{d_ff}.
\end{define}

Consider the category $Cat$ as an $(2,1)$-category where the morphisms are functors, and the $2$-morphisms are natural isomorphisms. Then the homotopy category $hCat$ is obtained from the usual category $Cat$ by identifying functors that are naturally isomorphic.

Then a weak functorial factorization in $\cD$ is just a morphism in the category $hCat_{/\cD^{\Delta^1}}$: $id|_{\cD^{\Delta^1}}\to\circ$ (we abuse notation and identify a weak functorial factorization: $\cD^{\Delta^1}\to\cD^{\Delta^2}$ with its image in $hCat$).

\begin{lem}
To any weak functorial factorization in $\cD$ there exist a functorial factorization in $\cD$ isomorphic to it.
\end{lem}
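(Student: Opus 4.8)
The plan is to transport the given weak factorization to a strict one along the witnessing natural isomorphism, using the fact that the composition functor is an isofibration (isomorphisms in the target lift to isomorphisms). Write $c\colon\cD^{\Delta^2}\to\cD^{\Delta^1}$ for the composition functor of Definition~\ref{d_ff}. A weak functorial factorization is a functor $W\colon\cD^{\Delta^1}\to\cD^{\Delta^2}$ together with a natural isomorphism $\eta\colon cW\Rightarrow\mathrm{id}_{\cD^{\Delta^1}}$. For an object $(X\xrightarrow{f}Y)$ I write $W(f)=(X'\xrightarrow{q'_f}L_f\xrightarrow{p'_f}Y')$ and $\eta_f=(a_f\colon X'\to X,\ b_f\colon Y'\to Y)$, a pair of isomorphisms with $f\circ a_f=b_f\circ p'_f\circ q'_f$. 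The single structural observation driving the proof is that $c$ lifts isomorphisms canonically: given a $2$-simplex and an isomorphism out of its composite, one transports the two outer objects while keeping the middle object fixed.

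First I would define the corrected functor $F$ on objects by $F(f):=(X\xrightarrow{\,q'_f a_f^{-1}\,}L_f\xrightarrow{\,b_f p'_f\,}Y)$; the identity $f\circ a_f=b_f p'_f q'_f$ shows that the composite of $F(f)$ equals $f$, so $cF=\mathrm{id}_{\cD^{\Delta^1}}$ strictly. On a morphism $(l,k)\colon f\to t$ of $\cD^{\Delta^1}$ I would define $F(l,k)$ to have outer components $(l,k)$ and the \emph{same} middle component $L_{(l,k)}\colon L_f\to L_t$ as $W(l,k)$. That this triple really is a morphism of $2$-simplices $F(f)\to F(t)$ reduces to two square identities, and each follows by concatenating the corresponding face square of $W(l,k)$ with a naturality square of $\eta$: the source square $l\circ a_f=a_t\circ l'$ handles the left face, and the target square $k\circ b_f=b_t\circ k'$ handles the right face.

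Next I would verify that $F$ is a functor isomorphic to $W$. Functoriality is immediate: the outer components of $F(l,k)$ are just $(l,k)$, which compose and preserve identities strictly, while the middle components are taken verbatim from $W$, which is already a functor. To produce the isomorphism I would define $\theta\colon W\Rightarrow F$ with components $\theta_f=(a_f,\ \mathrm{id}_{L_f},\ b_f)\colon W(f)\to F(f)$. This is a morphism of $2$-simplices by the defining equations $q'_f a_f^{-1}\circ a_f=q'_f$ and $b_f\circ p'_f=b_f p'_f$, and it is an isomorphism because $a_f,b_f$ and $\mathrm{id}_{L_f}$ are. Its naturality squares are exactly the source and target naturality squares of $\eta$ (with a trivial middle square), and applying $c$ to $\theta$ recovers $\eta$; hence $\theta$ exhibits $F\cong W$ as objects of $hCat_{/\cD^{\Delta^1}}$, so $F$ is a genuine functorial factorization isomorphic to the given weak one.

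I do not anticipate a real obstacle: once $c$ is seen to be an isofibration the rest is forced, and the only place demanding care is the bookkeeping in the two paragraphs above, namely the square identities making $F(l,k)$ a morphism and those making $\theta$ natural. In each case the check is a direct composition of one functoriality square of $W$ with one naturality square of $\eta$. I would also note, for the application in Theorem~\ref{t_main}, that whenever $W$ lands in a morphism of $\cA$ followed by a morphism of $\cB$ with $\cA,\cB$ closed under composition with isomorphisms (as is the case for $Lw^{\cong}(N)$ and $Sp^{\cong}(M)$), we have $q'_f a_f^{-1}\in\cA$ and $b_f p'_f\in\cB$, so $F$ inherits the same factorization type.
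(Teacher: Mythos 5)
Your proposal is correct and is essentially the paper's own proof: both transport the weak factorization along the witnessing natural isomorphism, keeping the middle object $L_f$ fixed and replacing the outer maps by $q'_fa_f^{-1}$ and $b_fp'_f$ (the paper writes these as $q_f\circ(i_f)_0$ and $(i_f)_1^{-1}\circ p_f$, using the inverse direction of the natural isomorphism). The only difference is that you spell out the morphism-level definition, functoriality, and the naturality of $\theta$ explicitly, which the paper leaves implicit.
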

\begin{proof}
Let:
$$(X\xrightarrow{f}Y)\mapsto (\barr{X}\xrightarrow{q_f} L_{f}\xrightarrow{p_f}\barr{Y}),$$
be a weak functorial factorization in $\cD$.

There is a natural isomorphism between the identity and the composition of the above factorization with the composition functor. Thus, for any morphism $f$ in $\cD$ we have an isomorphism: $i_f : f\xrightarrow{\cong}p_f\circ q_f$ in $\cD^{\Delta^1}$ denoted:
$$\xymatrix{{X}\ar[r]^{(i_f)_0}\ar[d]_{f} & \barr{X}\ar[d]^{p_f\circ q_f}\\
            {Y} \ar[r]^{(i_f)_1}   &   \barr{Y},}$$
s.t. for any morphism:
$$\xymatrix{{X}\ar[r]^{f}\ar[d]_{} & {Y}\ar[d]^{}\\
            {Z} \ar[r]^t   &   {W},}$$
in $\cD^{\Delta^1}$ the following diagram commutes:
$$\xymatrix{  & X\ar[dl]^{(i_f)_0}\ar[dd]\ar[rr]^f   &   &   Y\ar[dl]^{(i_f)_1}\ar[dd]\\
          \barr{X}\ar[dd]\ar[rr]^{p_f\circ q_f} &  & \barr{Y}\ar[dd] &           \\
            & Z\ar[dl]^{(i_t)_0}\ar[rr]^t   &   &   W \ar[dl]^{(i_t)_1}\\
          \barr{Z}\ar[rr]^{p_t\circ q_t} &  & \barr{W} & }$$

We define a functorial factorization in $\cD$ by:
$$(X\xrightarrow{f}Y)\mapsto ({X}\xrightarrow{q_f\circ (i_f)_0} L_{f}\xrightarrow{(i_f)_1^{-1}\circ p_f}{Y}).$$
For any morphism $f$ in $\cD$ we have a commutative diagram:
$$\xymatrix{{X}\ar[r]^{q_f\circ (i_f)_0}\ar[d]^{(i_f)_0}_{\cong} & L_{f}\ar[r]^{(i_f)_1^{-1}\circ p_f}\ar[d]_{=} & {Y}\ar[d]^{(i_f)_1}_{\cong}\\
           \barr{X}\ar[r]^{q_f} & L_{f}\ar[r]^{p_f} & \barr{Y},}$$
so the proof is complete.
\end{proof}

\begin{cor}\label{c_wff_to_ff}
Let $\cA$ and $\cB$ be classes of morphisms in $\cD$, that are invariant under isomorphisms. If there exist a weak functorial factorization in $\cD$ into a morphism in $\cA$ followed be a morphism in $\cB$, then there exist a functorial factorization in $\cD$ into a morphism in $\cA$ followed be a morphism in $\cB$.
\end{cor}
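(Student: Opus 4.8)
The plan is to deduce this directly from the preceding Lemma, which produces, from any weak functorial factorization, an honest functorial factorization isomorphic to it. The only thing left to verify is that the class memberships $q_f\in\cA$ and $p_f\in\cB$ are preserved when passing from the weak factorization to the functorial one constructed in that Lemma.

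First I would invoke the Lemma applied to the given weak functorial factorization into a morphism in $\cA$ followed by a morphism in $\cB$. Recall that the functorial factorization it produces is explicitly
$$(X\xrightarrow{f}Y)\mapsto ({X}\xrightarrow{q_f\circ (i_f)_0} L_{f}\xrightarrow{(i_f)_1^{-1}\circ p_f}{Y}),$$
where $(i_f)_0$ and $(i_f)_1$ are isomorphisms coming from the natural isomorphism witnessing the weak factorization. I claim this functorial factorization is of the required type, i.e. that $q_f\circ(i_f)_0\in\cA$ and $(i_f)_1^{-1}\circ p_f\in\cB$ for every $f$.

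The key observation is contained in the commutative diagram displayed at the end of the Lemma's proof,
$$\xymatrix{{X}\ar[r]^{q_f\circ (i_f)_0}\ar[d]^{(i_f)_0}_{\cong} & L_{f}\ar[r]^{(i_f)_1^{-1}\circ p_f}\ar[d]_{=} & {Y}\ar[d]^{(i_f)_1}_{\cong}\\
\barr{X}\ar[r]^{q_f} & L_{f}\ar[r]^{p_f} & \barr{Y}.}$$
Reading off the left-hand square, the vertical maps $(i_f)_0$ and $\mathrm{id}_{L_f}$ are isomorphisms, so $q_f\circ(i_f)_0$ and $q_f$ are isomorphic as objects of $\cD^{\Delta^1}$; since $q_f\in\cA$ by hypothesis and $\cA$ is invariant under isomorphisms, we conclude $q_f\circ(i_f)_0\in\cA$. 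Reading off the right-hand square, the vertical maps $\mathrm{id}_{L_f}$ and $(i_f)_1$ are isomorphisms, so $(i_f)_1^{-1}\circ p_f$ is isomorphic to $p_f$ in $\cD^{\Delta^1}$, and invariance of $\cB$ gives $(i_f)_1^{-1}\circ p_f\in\cB$.

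There is no genuine obstacle here; the content is entirely supplied by the preceding Lemma, and the corollary is merely the remark that the isomorphism it produces is realized by squares whose vertical legs are isomorphisms in $\cD^{\Delta^1}$, so the isomorphism-invariance of $\cA$ and $\cB$ transports the membership conditions. This completes the proof.
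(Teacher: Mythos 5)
Your proof is correct and is essentially the paper's own argument: the paper leaves this corollary as an immediate consequence of the preceding Lemma, and your verification that the two squares in the Lemma's final diagram exhibit $q_f\circ(i_f)_0\cong q_f$ and $(i_f)_1^{-1}\circ p_f\cong p_f$ in $\cD^{\Delta^1}$, so that isomorphism-invariance of $\cA$ and $\cB$ transports the memberships, is exactly the intended (implicit) reasoning.
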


We are now ready to prove our main theorem:

\begin{thm}\label{t_fact}
If $Mor(\mcal{C}) =^{func} M\circ N$ then $Mor(Pro(\mcal{C})) =^{func} Sp^{\cong}(M)\circ Lw^{\cong}(N)$.
\end{thm}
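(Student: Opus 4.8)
The plan is to upgrade the non-functorial construction of Proposition~\ref{p_f} to a functorial one by working entirely inside the equivalent model $\barr{Pro}(\cC)$ constructed in Section~\ref{s:barr}, where morphisms are genuine (equivalence classes of) pre morphisms built from strictly increasing reindexing functions together with natural transformations. Since $\barr{Pro}(\cC)\simeq Pro(\cC)$ by Corollary~\ref{c_equiv} and the classes $Lw^{\cong}(N),Sp^{\cong}(M)$ are invariant under isomorphism, by Corollary~\ref{c_wff_to_ff} it suffices to produce a \emph{weak} functorial factorization in $\barr{Pro}(\cC)$; this is the technical payoff of introducing $\barr{Pro}(\cC)$ at all. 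So my first step is to fix a functorial factorization $(X\xrightarrow{f}Y)\mapsto(X\xrightarrow{q_f}L_f\xrightarrow{p_f}Y)$ in $\cC$, viewed as a functor $\cC^{\Delta^1}\to\cC^{\Delta^2}$, and note that it induces a functor $(\cC^{\Delta^1})^A\to(\cC^{\Delta^2})^A$ for every indexing poset $A$, simply by applying the factorization levelwise.

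The heart of the argument is to make the Reedy construction of Section~\ref{ss_reedy} into a functor. Given an object $F^A\in\barr{Pro}(\cC^{\Delta^1})$, that is a diagram of morphisms $C\to D$ indexed by a cofinite directed set $A$ of infinite height, I would define its factorization $C\xrightarrow{g}H\xrightarrow{h}D$ in $\cC^A$ recursively on the degree filtration $A^{-1}\subset A^0\subset\cdots$ exactly as in the Reedy construction, but at each stage $c\in A^n\setminus A^{n-1}$, instead of \emph{choosing} an arbitrary factorization of the canonical map $C(c)\to \lim_{A^{n-1}_c}H\times_{\lim_{A^{n-1}_c}D}D(c)$ into $N$ followed by $M$ (as Lemma~\ref{l:extend_factorization} permits), I would apply the \emph{given functorial} factorization in $\cC$ to this canonical map. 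Because the functorial factorization in $\cC$ commutes with morphisms in $\cC^{\Delta^1}$, the resulting object $H^A$ and the maps $g,h$ depend functorially on $F^A$ with respect to levelwise morphisms, i.e.\ we obtain a functor $(\cC^{\Delta^1})^A\to(\cC^{\Delta^2})^A$ landing in $Lw(N)$ followed by $Sp(M)$.

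The main obstacle, and the reason the strict reindexing data of $\barr{Pro}(\cC)$ is essential, is promoting this level-fixed functoriality to functoriality over all of $\barr{Pro}(\cC^{\Delta^1})$, where a morphism $F^A\to F'^{A'}$ is a pre morphism $(\alpha,\phi)$ with $\alpha:A'\to A$ \emph{strictly} increasing. I would show that such a pre morphism induces a pre morphism between the Reedy factorizations: the strict monotonicity of $\alpha$ guarantees that $\alpha$ carries the degree filtration of $A'$ compatibly into that of $A$ (it sends a finite Reysha to a set whose relevant limits $\lim_{A'^{n-1}_{b}}$ map to $\lim_{A^{n-1}_{\alpha(b)}}$), so that the canonical maps whose functorial factorization defines $H$ assemble into a morphism in $\cC^{\Delta^1}$, and applying the functorial factorization in $\cC$ then yields the required natural transformation between the middle objects $L_{(\cdot,\cdot)}$ of Definition~\ref{d_ff}. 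This is where strictness in Remark~\ref{r_strictly} pays off: a merely weakly increasing $\alpha$ could collapse degrees and destroy the compatibility of the matching-object limits. I expect to have to verify that this assignment respects the partial order on pre morphisms and hence descends to connected components, giving a well-defined functor $\barr{Pro}(\cC^{\Delta^1})\to\barr{Pro}(\cC^{\Delta^2})$; transporting it across the equivalence $i$ and invoking Corollary~\ref{c_wff_to_ff} then completes the proof.
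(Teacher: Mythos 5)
Your core construction does match the paper's: build the Reedy factorization of a level map by applying the given functorial factorization of $\cC$ at each stage of the degree filtration, extend it to pre morphisms (this is exactly the paper's $\chi$-construction, and strictness of $\alpha$ is needed precisely where you say it is, to get the induced maps on the matching-object limits), then verify monotonicity with respect to the partial order on pre morphisms so that the assignment descends to connected components. That part of your plan is sound and is essentially the paper's argument.

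However, there is a genuine gap in your reduction. A (weak) functorial factorization in $Pro(\cC)$ is a section, up to natural isomorphism, of the composition functor $Pro(\cC)^{\Delta^2}\to Pro(\cC)^{\Delta^1}$, whose source and target are the categories of squares and of arrows of $Pro(\cC)$. What you construct is a functor $\barr{Pro}(\cC^{\Delta^1})\to\barr{Pro}(\cC^{\Delta^2})$, and the equivalence $i$ of Corollary~\ref{c_equiv} only identifies $\barr{Pro}(\cC^{\Delta^j})$ with $Pro(\cC^{\Delta^j})$ --- which is \emph{not} the same category as $Pro(\cC)^{\Delta^j}$. An object of $Pro(\cC^{\Delta^1})$ is a levelwise map of diagrams over a single index set, whereas an object of $Pro(\cC)^{\Delta^1}$ is an arbitrary morphism between pro-objects with possibly different index sets; similarly, morphisms in $Pro(\cC)^{\Delta^1}$ are arbitrary commutative squares, not level ones. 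Your phrase ``it suffices to produce a weak functorial factorization in $\barr{Pro}(\cC)$'' followed by a construction on $\barr{Pro}(\cC^{\Delta^1})$ silently conflates these two categories. Bridging them is a nontrivial levelization theorem: the paper invokes Meyer's result \cite{Mey} that $p_A:Pro(\cC^A)\to Pro(\cC)^A$ is an equivalence for every finite loopless $A$ (in particular $A=\Delta^1,\Delta^2$), and then produces the desired section as $s_3=e_{\Delta^2}\circ s_1\circ h_{\Delta^1}$, where $h_{\Delta^1}$ is a chosen quasi-inverse of the composite equivalence $e_{\Delta^1}:\barr{Pro}(\cC^{\Delta^1})\to Pro(\cC)^{\Delta^1}$, using that the composition functors $\circ_1,\circ_3$ commute with these equivalences. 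Without this ingredient (or an equivalent levelization-of-squares argument, which is substantial), your construction only factors level morphisms functorially with respect to level squares over strictly increasing reindexings, which is not yet a functorial factorization on $Pro(\cC)$.
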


\begin{proof}
Assume that we are given a functorial factorization in $\cC$ into a morphism in $N$ followed by a morphism in $M$. We need to find a functorial factorization in $Pro(\cC)$ into a morphism in $Lw^{\cong}(N)$ followed by a morphism in $Sp^{\cong}(M)$ (see Definition ~\ref{d_ff}).

%Then, by always using the given functorial factorization in $\cC$, the Reedy construction of section ~\ref{s_reedy} can be made into a well defined functorial factorization in $\cC^A$ for any cofinite poset $A$.

Since $Lw^{\cong}(N)$ and $Sp^{\cong}(M)$ are clearly invariant under isomorphisms, Corollary ~\ref{c_wff_to_ff} implies that it is enough to find a \emph{weak} functorial factorization in $Pro(\cC)$ into a morphism in $Lw^{\cong}(N)$ followed by a morphism in $Sp^{\cong}(M)$.

Recall that a weak functorial factorization in $Pro(\cC)$ is just a morphism in the category $hCat_{/Pro(\cC)^{\Delta^1}}$: $id|_{Pro(\cC)^{\Delta^1}}\to\circ$. 

We will achieve our goal by first  replacing $Pro(\cC)^{\Delta^1}$ and  $Pro(\cC)^{\Delta^2}$ with equivalent categories.

First, for every small category $A$ there is a a natural functor:

$$p_A:Pro(\cC^{A})\to Pro(\cC)^{A}.$$
By \cite{Mey},
when $A$ is a finite loopless category (for e.g. $A = \Delta^n$) $p_A$ is an equivalence of categories.

Consider now the following commutative diagram:

$$
\xymatrix{
\barr{Pro}(\cC^{\Delta^2}) \ar[r]^{\sim} \ar[d]^{[\circ_1]}& Pro(\cC^{\Delta^2}) \ar[r]^{\sim}_{p_{\Delta^2}} \ar[d]^{[\circ_2]} & Pro(\cC)^{\Delta^2}\ar[d]^{[\circ_3]} \\
\barr{Pro}(\cC^{\Delta^1}) \ar[r]^{\sim} & Pro(\cC^{\Delta^1}) \ar[r]^{\sim}_{p_{\Delta^1}} & Pro(\cC)^{\Delta^1},
}$$
where the $\circ_i$ are the different morphisms induced from composition.

We see now that out goal is to construct  a section $s_3$ to $\circ_3$ up to a natural transformation.
Note that for this it is enough to find a section $s_1$ to $\circ_1$. Indeed assume we have such an $s_1$ and consider the commutative  diagram:
$$
\xymatrix{
\barr{Pro}(\cC^{\Delta^2}) \ar[r]^{\sim}_{e_{\Delta^2}} \ar[d]^{[\circ_1]}& Pro(\cC)^{\Delta^2}\ar[d]^{[\circ_3]} \\
\barr{Pro}(\cC^{\Delta^1}) \ar[r]^{\sim}_{e_{\Delta^1}} & Pro(\cC)^{\Delta^1}.
}$$.

Since $e_{\Delta^1}$ is an equivalence we can  choose some functor $h_{\Delta^1}$ such that:
$$ e_{\Delta^1}\circ h_{\Delta^1} \sim Id_{Pro(\cC)^{\Delta^1}}.$$

Now take $s_3 := e_{\Delta^2}\circ s_1\circ h_{\Delta_1}$ and we get:

$$ [\circ_3]\circ s_3 = [\circ_3]\circ  e_{\Delta^2}\circ s_1\circ h_{\Delta_1} = $$
$$ = e_{\Delta^1} \circ [\circ_1] \circ s_1\circ h_{\Delta_1} = $$
$$= e_{\Delta^1} \circ h_{\Delta_1} \sim Id_{Pro(\cC)^{\Delta^1}}.$$

So we are left with constructing  the section:

%
%We now explain the above diagram. There is an obvious functor:
%$$Pro(\cC^{\Delta^1})\to Pro(\cC)^{\Delta^1}.$$
%(An object in $Pro(\cC^{\Delta^1})$ consists of two objects in $Pro(\cC)$, indexed by the same category, and a natural transformation between them. As we have remarked, such a datum induces a morphism in $Pro(\cC)$. This is the object function of the functor above). This functor is an equivalence of categories (see \cite{Mey}). More generally, for any finite loopless category $A$, the natural functor:
%$$Pro(\cC^{A})\to Pro(\cC)^{A},$$
%is an equivalence of categories. This explains the first and fifth morphisms in the chain above (it is not hard to check that these morphisms are indeed over $Pro(\cC)^{\Delta^1}$ up to a natural isomorphism).
%
%The second and fourth morphisms in the chain above are the ones given by Corollary ~\ref{c_equiv}. The rest of this section is devoted to the construction of the third morphism, which would finish the proof of Theorem ~\ref{t_fact}.

$$s_1:\barr{Pro}(\cC^{\Delta^1})\to \barr{Pro}(\cC^{\Delta^2}).$$ 
 
Let $f$ be an object of $\barr{Pro}(\cC^{\Delta^1})$. Then $f:E^A\to F^A$ is a natural transformation between objects in $\barr{Pro}(\cC)$. We define the value of our functor on $f$ to be the Reedy factorization: $E\xrightarrow{g_f} H_f\xrightarrow{h_f} F$ (see Section ~\ref{ss_reedy}). As we have shown, we have: $f=h_f\circ g_f,g_f\in Lw(N),h_f\in Sp(M)$.

Let $f$ and $t$ be objects of $\barr{Pro}(\cC^{\Delta^1})$. Then $f:E^A\to F^A,t:K^B\to G^B$ are natural transformation between objects in $\barr{Pro}(\cC)$. Let $(\alpha,\Phi)$ be a representative to a morphism $f\to t$ in $\barr{Pro}(\cC^{\Delta^1})$. Then $\alpha:B\to A$ is a strictly increasing function and $\Phi:\alpha^*f\to t$ is a morphism in $(\cC^{\Delta^1})^B\cong(\cC^B)^{\Delta^1}.$ Thus $\Phi=(\phi,\psi)$ is just a pair of morphisms in $\cC^B$ and we have a commutative diagram in $\cC^B$:
$$\xymatrix{{E\circ\alpha}\ar[r]^{{f_{\alpha}}}\ar[d]_{\phi} & {F\circ\alpha}\ar[d]^{\psi}\\
            K \ar[r]^t    &   G.}$$

Now consider the Reedy factorizations of $f$ and $t$:
$$E\xrightarrow{g_f} H_f\xrightarrow{h_f} F,K\xrightarrow{g_t} H_t\xrightarrow{h_t} G.$$
We need to construct a representative to a morphism in $\barr{Pro}(\cC^{\Delta^2})$ between these Reedy factorizations. We take the strictly increasing function $B\to A$ to be just $\alpha$. All we need to construct is a natural transformation: $\chi:H_f\circ\alpha\to H_t$ such that the following diagram in $\cC^B$ commutes:
$$\xymatrix{{E}\circ\alpha\ar[r]^{(g_f)_{\alpha}}\ar[d]_{\phi} & H_{f}\circ\alpha\ar[r]^{(h_f)_{\alpha}}\ar[d]^{\chi} & {F}\circ\alpha\ar[d]^{\psi}\\
            {K} \ar[r]^{g_t} & H_t\ar[r]^{h_t}    &   {G}.}$$
We will define $\chi:H_f\circ\alpha\to H_t$ recursively, and refer to it as the $\chi$-construction.

Let $n\geq 0$. Suppose we have defined a natural transformation: $\chi:(H_f\circ\alpha)|_{B^{n-1}}\to H_t|_{B^{n-1}}$ such that the following diagram in $\cC^{B^{n-1}}$ commutes:
$$\xymatrix{({E}\circ\alpha)|_{B^{n-1}}\ar[r]^{(g_f)_{\alpha}}\ar[d]_{\phi} & (H_{f}\circ\alpha)|_{B^{n-1}}\ar[r]^{(h_f)_{\alpha}}\ar[d]^{\chi} & ({F}\circ\alpha)|_{B^{n-1}}\ar[d]^{\psi}\\
            {K}|_{B^{n-1}} \ar[r]^{g_t} & H_t|_{B^{n-1}}\ar[r]^{h_t}    &   {G}|_{B^{n-1}}}$$
(see Definition ~\ref{def_deg}).

Let $b\in B^n\setminus B^{n-1}$.

There exist a unique $m\geq0$ s.t. $\alpha(b)\in A^m\setminus A^{m-1}$. It is not hard to check, using the induction hypothesis and the assumptions on the datum we began with, that we have an induced commutative diagram:
$$\xymatrix{  & E(\alpha(b))\ar[dl]\ar[dd]\ar[rr]   &   &   K(b)\ar[dl]\ar[dd]\\
          F(\alpha(b))\ar[dd]\ar[rr] &  & G(b)\ar[dd] &           \\
            & \lim_{A_{\alpha(b)}^{m-1}}H_f\ar[dl]^r\ar[rr]   &   &   \lim_{B_{b}^{n-1}}H_t\ar[dl]^s\\
          \lim_{A_{\alpha(b)}^{m-1}}F\ar[rr] &  & \lim_{B_{b}^{n-1}}G}$$
(we remark that one of the reasons for demanding a \emph{strictly} increasing function in the definition of a pre morphism is that otherwise we would not have the two bottom horizontal morphism in the above diagram, see Remark ~\ref{r_strictly})

Thus, there is an induced commutative diagram:
$$\xymatrix{ E(\alpha(b))\ar[d]\ar[r]   &  \lim_{A_{\alpha(b)}^{m-1}}H_f\times_{\lim_{A_{\alpha(b)}^{m-1}}F} F(\alpha(b))\ar[d] \\
 K(b)\ar[r] & \lim_{B_{b}^{n-1}}H_t\times_{\lim_{B_{b}^{n-1}}G} G(b).}$$

We apply the functorial factorizations in $\cC$ to the horizontal arrows in the above diagram and get a commutative diagram:
$$\xymatrix{ E(\alpha(b))\ar[d]\ar[r]   & H_f(\alpha(b))\ar[d]\ar[r]   & \lim_{A_{\alpha(b)}^{m-1}}H_f\times_{\lim_{A_{\alpha(b)}^{m-1}}F} F(\alpha(b))\ar[d] \\
 K(b)\ar[r] & H_t(b)\ar[r] & \lim_{B_{b}^{n-1}}H_t\times_{\lim_{B_{b}^{n-1}}G} G(b).}$$

It is not hard to verify that taking $\chi_b:H_f(\alpha(b))\to H_t(b)$ to be the morphism described in the diagram above completes the recursive definition.

We need to show that the morphism we have constructed in $\barr{Pro}(\cC^{\Delta^2})$ between the Reedy factorizations does not depend on the choice of representative $(\alpha,\Phi)$ to the morphism $f\to t$ in $\barr{Pro}(\cC^{\Delta^1})$. So let $(\alpha',\Phi')$ be another pre morphism from $f$ to $t$.

Thus, $\alpha':B\to A$ is a strictly increasing function, $\Phi'=(\phi',\psi')$ is a pair of morphisms in $\cC^B$ and we have a commutative diagram in $\cC^B$:
$$\xymatrix{{E\circ\alpha'}\ar[r]^{{f_{\alpha'}}}\ar[d]_{\phi'} & {F\circ\alpha'}\ar[d]^{\psi'}\\
            K \ar[r]^t    &   G.}$$

We apply the $\chi$-construction to this new datum and obtain a natural transformation: $\chi':H_f\circ\alpha'\to H_t$.

\begin{lem}\label{l_reedy wd}
Suppose that $(\alpha',\Phi')\geq(\alpha,\Phi)$.
Then for every $b\in B$ we have: $\alpha'(b)\geq\alpha(b)$ and the following diagram commutes:
\[
\xymatrix{ & H_f(\alpha(b)) \ar[dr]^{\chi_b} & \\
H_f(\alpha'(b))\ar[ur]\ar[rr]^{\chi'_b} & & H_t(b).}
\]
In other words, we have an inequality of pre morphisms from $H_f$ to $H_t$:
$$(\alpha',\chi')\geq(\alpha,\chi)$$
\end{lem}
\begin{proof}
$(\alpha',\Phi')\geq(\alpha,\Phi)$ means that for every $b\in B$ we have: $\alpha'(b)\geq\alpha(b)$ and the following diagrams commute:
\[
\xymatrix{ & E(\alpha(b)) \ar[dr]^{\phi_b} &  &  &   F(\alpha(b)) \ar[dr]^{\psi_b} &  \\
E(\alpha'(b))\ar[ur]\ar[rr]^{\phi'_b} & & K(b) &  F(\alpha'(b))\ar[ur]\ar[rr]^{\psi'_b} & & G(b).}
\]

We will prove the conclusion inductively.

Let $n\geq 0$. Suppose we have shown that for every $b\in B^{n-1}$ the following diagram commutes:
\[
\xymatrix{ & H_f(\alpha(b)) \ar[dr]^{\chi_b} & \\
H_f(\alpha'(b))\ar[ur]\ar[rr]^{\chi'_b} & & H_t(b).}
\]

Let $b\in B^n\setminus B^{n-1}$.

There exist a unique $m\geq0$ s.t. $\alpha(b)\in A^m\setminus A^{m-1}$. As we have shown, we have an induced commutative diagram:
$$\xymatrix{ E(\alpha(b))\ar[d]\ar[r]   &  \lim_{A_{\alpha(b)}^{m-1}}H_f\times_{\lim_{A_{\alpha(b)}^{m-1}}F} F(\alpha(b))\ar[d] \\
 K(b)\ar[r] & \lim_{B_{b}^{n-1}}H_t\times_{\lim_{B_{b}^{n-1}}G} G(b),}$$
and the map $\chi_b:H_f(\alpha(b))\to H_t(b)$ is just the map obtained when we apply the functorial factorizations in $\cC$ to the horizontal arrows in the diagram above:
$$\xymatrix{ E(\alpha(b))\ar[d]\ar[r]   & H_f(\alpha(b))\ar[d]\ar[r]   & \lim_{A_{\alpha(b)}^{m-1}}H_f\times_{\lim_{A_{\alpha(b)}^{m-1}}F} F(\alpha(b))\ar[d] \\
 K(b)\ar[r] & H_t(b)\ar[r] & \lim_{B_{b}^{n-1}}H_t\times_{\lim_{B_{b}^{n-1}}G} G(b).}$$

Similarly, there exist a unique $l\geq0$ s.t. $\alpha'(b)\in A^l\setminus A^{l-1}$, and we have an induced commutative diagram:
$$\xymatrix{ E(\alpha'(b))\ar[d]\ar[r]   &  \lim_{A_{\alpha'(b)}^{l-1}}H_f\times_{\lim_{A_{\alpha'(b)}^{l-1}}F} F(\alpha'(b))\ar[d] \\
 K(b)\ar[r] & \lim_{B_{b}^{n-1}}H_t\times_{\lim_{B_{b}^{n-1}}G} G(b).}$$
The map $\chi'_b:H_f(\alpha'(b))\to H_t(b)$ is the map obtained when we apply the functorial factorizations in $\cC$ to the horizontal arrows in the diagram above:
$$\xymatrix{ E(\alpha'(b))\ar[d]\ar[r]   & H_f(\alpha'(b))\ar[d]\ar[r]   & \lim_{A_{\alpha'(b)}^{l-1}}H_f\times_{\lim_{A_{\alpha'(b)}^{l-1}}F} F(\alpha'(b))\ar[d] \\
 K(b)\ar[r] & H_t(b)\ar[r] & \lim_{B_{b}^{n-1}}H_t\times_{\lim_{B_{b}^{n-1}}G} G(b).}$$

Since $\alpha'(b)\geq\alpha(b)$ we clearly have an induced commutative diagram:
$$\xymatrix{  & E(\alpha'(b))\ar[dl]\ar[dd]\ar[rr]   &   &   F(\alpha'(b))\ar[dl]\ar[dd]\\
          E(\alpha(b))\ar[dd]\ar[rr] &  & F(\alpha(b))\ar[dd] &           \\
            & \lim_{A_{\alpha'(b)}^{l-1}}H_f\ar[dl]^r\ar[rr]   &   &   \lim_{A_{\alpha'(b)}^{l-1}}F\ar[dl]^s\\
          \lim_{A_{\alpha(b)}^{m-1}}H_f\ar[rr] &  & \lim_{A_{\alpha(b)}^{m-1}}F}$$

Combining all the above and using the induction hypothesis and the assumptions of the lemma we get an induced commutative diagram:
$$\xymatrix{  & E(\alpha'(b))\ar[dl]\ar[dd]\ar[r]   &   E(\alpha(b))\ar[dll]\ar[dd]\\
          K(b)\ar[dd] &  &           \\
            &  \lim_{A_{\alpha'(b)}^{l-1}}H_f\times_{\lim_{A_{\alpha'(b)}^{l-1}}F} F(\alpha'(b))\ar[dl]\ar[r]   &   \lim_{A_{\alpha(b)}^{m-1}}H_f\times_{\lim_{A_{\alpha(b)}^{m-1}}F} F(\alpha(b))\ar[dll]\\
          \lim_{B_{b}^{n-1}}H_t\times_{\lim_{B_{b}^{n-1}}G} G(b) &   &  }$$

Applying the functorial factorizations in $\cC$ to the vertical arrows in the diagram above gives us the inductive step.
\end{proof}

We need to show that the morphism we have constructed in $\barr{Pro}(\cC^{\Delta^2})$ between the Reedy factorizations does not depend on the choice of representative $(\alpha,\Phi)$ to the morphism $f\to t$ in $\barr{Pro}(\cC^{\Delta^1})$. So let $(\alpha',\Phi')$ be another representative.

Thus, $\alpha':B\to A$ is a strictly increasing function, $\Phi'=(\phi',\psi')$ is a pair of morphisms in $\cC^B$ and we have a commutative diagram in $\cC^B$:
$$\xymatrix{{E\circ\alpha'}\ar[r]^{{f_{\alpha'}}}\ar[d]_{\phi'} & {F\circ\alpha'}\ar[d]^{\psi'}\\
            K \ar[r]^t    &   G.}$$

We apply the $\chi$-construction to this new datum and obtain a natural transformation: $\chi':H_f\circ\alpha'\to H_t$.

The pre morphisms $(\alpha,\Phi)$,$(\alpha',\Phi')$ both represent the same morphism $f\to t$ in $\barr{Pro}(\cC^{\Delta^1})$, so by Corollary ~\ref{c_directed} there exist a  pre morphism $({\alpha''},{\Phi''})$ from $f$ to $t$ s.t. $({\alpha''},{\Phi''})\geq(\alpha,\Phi),(\alpha',\Phi')$.

Thus, $\alpha'':B\to A$ is a strictly increasing function, $\Phi''=(\phi'',\psi'')$ is a pair of morphisms in $\cC^B$ and we have a commutative diagram in $\cC^B$:
$$\xymatrix{{E\circ\alpha''}\ar[r]^{{f_{\alpha''}}}\ar[d]_{\phi''} & {F\circ\alpha''}\ar[d]^{\psi''}\\
            K \ar[r]^t    &   G.}$$

We apply the $\chi$-construction to this new datum and obtain a natural transformation: $\chi'':H_f\circ\alpha''\to H_t$.

$({\alpha''},{\Phi''})\geq(\alpha,\Phi),(\alpha',\Phi')$ means that for every $b\in B$ we have: $\alpha'(b)\geq\alpha(b)$ and the following diagrams commute:

\[
\xymatrix{ & E(\alpha(b)) \ar[dr]^{\phi_b} &  &  &   E(\alpha'(b)) \ar[dr]^{\phi'_b} &  \\
E(\alpha''(b))\ar[ur]\ar[rr]^{\phi''_b} & & K(b) &  E(\alpha''(b))\ar[ur]\ar[rr]^{\phi''_b} & & K(b).}
\]

\[
\xymatrix{ & F(\alpha(b)) \ar[dr]^{\psi_b} &  &  &   F(\alpha'(b)) \ar[dr]^{\psi'_b} &  \\
F(\alpha''(b))\ar[ur]\ar[rr]^{\psi''_b} & & G(b) &  F(\alpha''(b))\ar[ur]\ar[rr]^{\psi''_b} & & G(b).}
\]

Thus, to get the desired result, it remains to show that for every $b\in B$ the following diagrams commute:

\[
\xymatrix{ & H_f(\alpha(b)) \ar[dr]^{\chi_b} &  &  &   H_f(\alpha'(b)) \ar[dr]^{\chi'_b} & \\
H_f(\alpha''(b))\ar[ur]\ar[rr]^{\chi''_b} & & H_t(b)  &  H_f(\alpha''(b))\ar[ur]\ar[rr]^{\chi''_b} & & H_t(b).}
\]
But this follows from Lemma ~\ref{l_reedy wd}.

It remains to verify that we have indeed defined a functor.

We first check that the identity goes to the identity.

Let $f$ be an object of $\barr{Pro}(\cC^{\Delta^1})$. Then $f:E^A\to F^A$ is a natural transformation between objects in $\barr{Pro}(\cC)$. Clearly $(\alpha,\Phi)=(\alpha,\phi,\psi)=(id_A,id_E,id_F)$ is a representative to the identity morphism $f\to f$ in $\barr{Pro}(\cC^{\Delta^1})$.

We now need to apply the $\chi$-construction to $(\alpha,\phi,\psi)$. It is not hard to verify that we obtain the identity natural transformation: $\chi=id_{H_f}:H_f\circ\alpha\to H_f$. Thus the result of applying the functor to the identity is the identity.

We now check that there is compatibility w.r.t. composition.

Let $f,t,r$ be objects of $\barr{Pro}(\cC^{\Delta^1})$. Then $f:E^A\to F^A,t:K^B\to G^B,r:L^C\to M^C$ are natural transformation between objects in $\barr{Pro}(\cC)$. Let $(\alpha,\Phi)=(\alpha,\phi,\psi)$ be a representative to a morphism $f\to t$ in $\barr{Pro}(\cC^{\Delta^1})$ and $(\beta,\Psi)=(\beta,\gamma,\delta)$ be a representative to a morphism $t\to r$ in $\barr{Pro}(\cC^{\Delta^1})$. Then $$(\alpha\beta,\Psi\Phi_{\beta})=(\alpha\beta,\gamma\phi_{\beta},\delta\psi_{\beta})$$
is a representative to the composition of the above morphisms in $\barr{Pro}(\cC^{\Delta^1})$.

We now apply the $\chi$-construction to $(\alpha,\phi,\psi)$, and get a natural transformation: $\chi:H_f\circ\alpha\to H_t$, and we apply the $\chi$-construction to $(\beta,\gamma,\delta)$, and get a natural transformation: $\epsilon:H_t\circ\beta\to H_r$.

It is not hard to verify that applying the $\chi$-construction to $(\alpha\beta,\gamma\phi_{\beta},\delta\psi_{\beta})$ yields the natural transformation: $\epsilon\chi_{\beta}:H_f\circ(\alpha\beta)\to H_r$. Thus, applying the $\chi$-construction to the composition $(\beta,\Psi)\circ(\alpha,\Phi)$ yields the composition of the pre morphisms which are the $\chi$-constructions of $(\beta,\Psi)$ and $(\alpha,\Phi)$, as desired.

\end{proof}

\begin{cor}\label{c_wfs}
If $Mor(\mcal{C}) = ^{func} M\circ N$ and $N\perp M$, then $(Lw^{\cong}(N),R(Sp^{\cong}(M)))$ is a functorial weak factorization system in $Pro(\cC)$.
\end{cor}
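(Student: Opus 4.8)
The plan is to recognize that a functorial weak factorization system $(Lw^{\cong}(N),R(Sp^{\cong}(M)))$ in $Pro(\cC)$ amounts to exactly three assertions, and that two of them have already been proved (in their non-functorial, purely lifting-theoretic form) while the third is an immediate consequence of Theorem ~\ref{t_fact}. Explicitly, unwinding the definition of a functorial weak factorization system, I must verify: (i) $Mor(Pro(\cC)) =^{func} R(Sp^{\cong}(M))\circ Lw^{\cong}(N)$; (ii) $Lw^{\cong}(N) = {}^{\perp}R(Sp^{\cong}(M))$; and (iii) $(Lw^{\cong}(N))^{\perp} = R(Sp^{\cong}(M))$. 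Note that conditions (ii) and (iii) are ordinary lifting identities with no functoriality content, so they can be imported wholesale from the non-functorial theory.

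For conditions (ii) and (iii) I would invoke Proposition ~\ref{p_wfs}. Its hypotheses are $Mor(\cC) = M\circ N$ and $N\perp M$. The second holds by assumption, and the first holds because $Mor(\cC) =^{func} M\circ N$ clearly implies $Mor(\cC) = M\circ N$ (as remarked in the introduction). Hence Proposition ~\ref{p_wfs} applies and tells us that $(Lw^{\cong}(N),R(Sp^{\cong}(M)))$ is a (non-functorial) weak factorization system in $Pro(\cC)$; in particular the two lifting identities ${}^{\perp}R(Sp^{\cong}(M)) = Lw^{\cong}(N)$ and $(Lw^{\cong}(N))^{\perp} = R(Sp^{\cong}(M))$ of conditions (ii) and (iii) hold.

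For condition (i) I would apply Theorem ~\ref{t_fact}. Since $Mor(\cC) =^{func} M\circ N$, that theorem yields $Mor(Pro(\cC)) =^{func} Sp^{\cong}(M)\circ Lw^{\cong}(N)$, i.e. a functorial factorization of every map of $Pro(\cC)$ into a morphism in $Lw^{\cong}(N)$ followed by a morphism in $Sp^{\cong}(M)$. Because every morphism is a retract of itself we have the inclusion $Sp^{\cong}(M)\subseteq R(Sp^{\cong}(M))$, so this same functor is a fortiori a functorial factorization into a morphism in $Lw^{\cong}(N)$ followed by a morphism in $R(Sp^{\cong}(M))$. This gives $Mor(Pro(\cC)) =^{func} R(Sp^{\cong}(M))\circ Lw^{\cong}(N)$, establishing condition (i). Combining (i), (ii) and (iii) completes the proof.

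I do not expect any genuine obstacle here: all of the substantive work — constructing the functorial Reedy-type factorization through the model $\barr{Pro}(\cC)$ and verifying the lifting properties of $Sp^{\cong}(M)$ — is already carried out in Theorem ~\ref{t_fact}, Lemma ~\ref{l:SpMo_is_Mo}, and Proposition ~\ref{p_wfs}. The corollary is purely an assembly step, and the only point requiring any care is the trivial but necessary observation that enlarging the right-hand class from $Sp^{\cong}(M)$ to $R(Sp^{\cong}(M))$ preserves the functorial factorization while simultaneously being the class forced upon us by the lifting identities of Proposition ~\ref{p_wfs}.
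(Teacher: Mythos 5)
Your proof is correct and is exactly the paper's argument: the paper's own proof of this corollary is the single sentence ``This follows from Theorem~\ref{t_fact} and Proposition~\ref{p_wfs},'' and your write-up simply spells out the assembly — Proposition~\ref{p_wfs} (applicable since $Mor(\mcal{C}) =^{func} M\circ N$ implies $Mor(\mcal{C}) = M\circ N$) supplies the two lifting identities, while Theorem~\ref{t_fact} together with the inclusion $Sp^{\cong}(M)\subseteq R(Sp^{\cong}(M))$ supplies the functorial factorization.
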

\begin{proof}
This follows from Theorem ~\ref{t_fact} and Proposition ~\ref{p_wfs}.
\end{proof}

Department of Mathematics, Hebrew University, Jerusalem

\emph{E-mail address}:
\texttt{ilanbarnea770@gmail.com}

Department of Mathematics, Hebrew University, Jerusalem

\emph{E-mail address}:
\texttt{tomer.schlank@gmail.com}

\end{document}